\documentclass[12pt]{amsart}
\usepackage{amssymb}
\usepackage[all]{xy}
\usepackage{graphicx}
\usepackage{mathrsfs}
\usepackage{hyperref}
%\UseRawInputEncoding
\input xy
\xyoption{all}

\textheight 22cm %\voffset -1cm
\textwidth 16cm \hoffset -1.5cm
\usepackage{amscd}
\newtheorem{lemma}{Lemma}[section]
\newtheorem{corollary}[lemma]{Corollary}
\newtheorem{theorem}[lemma]{Theorem}

\theoremstyle{definition}
\newtheorem{remark}[lemma]{Remark}
\newtheorem{definition}[lemma]{Definition}

\newtheorem{example}[lemma]{Example}

\DeclareMathOperator{\Hom}{Hom}
\DeclareMathOperator{\Ext}{Ext}
\DeclareMathOperator{\End}{End}

\DeclareMathOperator{\modd}{mod}
\DeclareMathOperator{\ind}{ind}
\DeclareMathOperator{\Gr}{Gr}
\DeclareMathOperator{\proj}{proj}
\DeclareMathOperator{\coind}{coind}
\DeclareMathOperator{\add}{add}
\DeclareMathOperator{\rad}{rad}
\DeclareMathOperator{\dimm}{dim}

\usepackage{color}

\begin{document}

\title{Cluster algebras generated by projective
cluster variables}

\author{Karin Baur}
\address{School of Mathematics, University of Leeds, Leeds, LS2 9JT, United Kingdom}
\address{On leave from the University of Graz}
\email{pmtkb@leeds.ac.uk}

\author{Alireza Nasr-Isfahani}
\address{Department of Pure Mathematics,
Faculty of Mathematics and Statistics,
University of Isfahan,
Isfahan 81746-73441, Iran and School of Mathematics, Institute for Research in Fundamental Sciences (IPM), P.O. Box: 19395-5746, Tehran,
Iran}
\email{nasr$_{-}$a@sci.ui.ac.ir / nasr@ipm.ir}

\subjclass[2000]{Primary {13F60}, {05E10}, {05E15}; Secondary {16G20}}

\keywords{Cluster algebra, Cluster variable, Lower bound cluster algebra, Cluster character}

\begin{abstract}
We introduce the notion of a lower bound cluster algebra generated by projective cluster variables
as a polynomial ring over the initial cluster variables and the so-called projective cluster variables.
We show that under an acyclicity assumption, the cluster algebra and the lower bound cluster
algebra generated by projective cluster variables coincide.
In this case we use our results to construct a basis for the cluster algebra.
We also show that any coefficient-free cluster algebra of types $A_n$ or $\widetilde{A}_n$ is equal
to the corresponding lower bound cluster algebra generated by projective cluster variables.
\end{abstract}

\maketitle

%%%%%%%%%%%%%%%%%%%%%%%%%%%%%%%%%%%%%%
\section{Introduction}
Cluster algebras were introduced and first investigated by Fomin and
Zelevinsky \cite{FZ1} in order to better understand Lusztig's theory of total positivity and canonical bases in quantum groups \cite{L, L1}. The rings of functions of many important varieties, such as Grassmannians, semisimple Lie groups, flag varieties and Teichm\"{u}ller spaces have a cluster algebra structure. There are two general approaches towards cluster algebras:
\begin{itemize}
\item[1)] Study subalgebras of cluster algebras whose behavior
is known better~\cite{BFZ, M, MRZ}.
\item[2)] Study related larger algebras with better behavior, for a given cluster algebras~\cite{BFZ}.
\end{itemize}
The notion of a
lower bound cluster algebra was introduced by Berenstein, Fomin and Zelevinsky \cite{BFZ} as
a finitely generated subalgebra of the associated cluster algebra.
They proved that the cluster algebra $\mathcal{A}(\Sigma)$ associated with a totally mutable seed
$\Sigma$ is equal to the lower bound $\mathcal{L}(\Sigma)$ if and only if $\Sigma$ acyclic. In this case
they constructed a basis for the cluster algebra $\mathcal{A}(\Sigma)$, and relations
among the generators of the cluster algebra \cite{BFZ}. Combining these results we
have an explicit description of cluster algebras with acyclic seeds. Muller, Rajchgot and Zykoski
in~\cite{MRZ} used the cycle relations and gave an explicit presentation for each lower bound cluster
algebra. By using this presentation they proved that each lower bound cluster algebra is Cohen-Macaulay
and normal.

Cluster categories were introduced
in \cite{BMRRT} (also for type $A_n$ in \cite{CCS}) as a
categorical model for cluster algebras.
The cluster-tilted algebras as introduced by Buan, Marsh and
Reiten \cite{BMR1} have a key role in  the study of cluster
categories. Also an important connection between cluster algebras
and cluster-tilted algebras was established in \cite{BMR2, CCS2}.
Caldero and Chapoton defined in \cite{CC},
for each object $M$ of a cluster category a fraction $X_M$, i.e. a cluster variable of
an associated cluster algebra.
They showed that for every cluster category, the corresponding map (the so-called
the Caldero-Chapoton map) from the set of (rigid)
indecomposable factors of cluster-tilting objects to the set of cluster variables is bijective
(see also \cite{CK}). The bijectivity of the Caldero-Chapoton map for acyclic cluster algebras was
proved in \cite{BMRT}. For an arbitrary cluster-tilting object $T$ in a $2$-Calabi-Yau triangulated
category $\mathcal C$ over an algebraically closed field, Palu defined a fraction $X(T,L)$ in the
cluster algebra for every
indecomposable object $L$ of $\mathcal C$ and showed that the so-called cluster character,
i.e. the map $L\mapsto X(T,L)$, satisfies a certain multiplicative formula, \cite{Pa}.
Palu proved furthermore that in the finite and acyclic case, this map induces a bijection between
the indecomposable rigid objects of the cluster category and the cluster variables of the cluster algebra,
thus confirming conjecture 2 of~\cite{CK}.

Let $\Sigma$ be a seed, $\mathcal{A}(\Sigma)$ be an acyclic cluster algebra and $T=\oplus_{i=1}^{n}T_i$ (with
indecomposable summands $T_i$)
be the corresponding cluster-tilting object in the associated cluster category as in~\cite{BMRRT}.
We call the $x_{P_i}=X(T, T_i)$, $i=1,\dots, n$, the {\em projective cluster variables}.
In this paper we define the {\em lower bound cluster algebra generated by projective cluster variables},
denoted by $\mathcal{L}_P(\Sigma)$, as the subalgebra of the
field of rational functions in the initial cluster variables generated by the union of the
initial cluster variables with the projective cluster variables.
By definition, $\mathcal{L}_P(\Sigma)$ is contained in the cluster algebra $\mathcal{A}(\Sigma)$
and is finitely generated.
We prove that if
$\Sigma$ is an acyclic seed, then $\mathcal{L}_P(\Sigma)=\mathcal{A}(\Sigma)$ (Theorem~\ref{thm1}).
In this case we provide a basis for the cluster algebra by the so-called standard monomials
in the sets $\{x_i\}_i$ and $\{x_{P_i}\}_i$, Theorem~\ref{thm:standard}.
We also show that the two algebras $\mathcal{L}_P(\Sigma)$ and $\mathcal{A}(\Sigma)$ coincide
for any seed in the types $A_n$ and $\widetilde{A}_n$ (Theorem~\ref{thm3} and Theorem~\ref{thm4}).
Thus we give a large class of seeds $\Sigma$ where the lower bound cluster algebra
generated by projective cluster variables  is equal to the
cluster algebra $\mathcal A(\Sigma)$ but $\mathcal L(\Sigma)$ is not equal to the cluster algebra.
In particular, this proves that
$\mathcal{L}(\Sigma)\ne \mathcal{L}_P(\Sigma)$ in general (see Example~\ref{ex:cyclic}).
Our results illustrate that the lower bound cluster algebras generated by projective cluster
variables have good properties and they provide new information about cluster algebras.

The paper is organized as follows. In Section 2, we recall background and
results that will
be needed later in the paper. We also introduce our notion of projective cluster variables and the
lower bound cluster algebra generated by them.
In Section 3, we show that for any totally mutable acyclic seed
$\Sigma$, $\mathcal{A}(\Sigma)=\mathcal{L}_P(\Sigma)$ and that the standard monomials
in the initial cluster variables and in the projective cluster variables form a basis of the
cluster algebra. In Section 4, we study the behavior of projective cluster variables in acyclic coefficient-free cluster algebras under mutation. In Section 5, using the classification of quivers of type $A_n$ up to mutation equivalence
of~\cite{BV}, we show that any coefficient free cluster algebra of type $A_n$ is equal to
the corresponding lower bound cluster algebra generated by projective cluster variables.
Finally in the last section we show that in type $\widetilde{A}_n$, the two algebras $\mathcal{A}(\Sigma)$ and $\mathcal{L}_P(\Sigma)$
coincide.

%%%%%%%%%%%%%%%%%%%%%%%%%%%%%%%%%%%%%%

\section{Background} \label{sec:background}

We first recall the definition of a cluster algebra.
Let $\mathbb{P}$ be an abelian group without torsion, called the coefficient group. Let $n$ be a
positive integer and let $[1, n]$ stand for the set $\lbrace 1, 2, \cdots, n\rbrace$.
Let $\mathcal{F}$ be the field of rational functions in $n$ independent variables with coefficients in the field
of fractions of the integer group ring $\mathbb{ZP}$. A {\em seed} in $\mathcal{F}$ is a triple
$\Sigma=(\mathbf{x}, \mathbf{p}, B)$, where $\mathbf{x}=\lbrace x_1, x_2, \cdots, x_n\rbrace$ is a transcendence basis of $\mathcal{F}$ over the field of fraction of
$\mathbb{ZP}$, $\mathbf{p}=(p_1^{\pm}, \cdots,p_n^{\pm})$ is a $2n$-tuple of elements of $\mathbb{P}$ and $B =(b_{ij})_{ij}$ is an
$n\times n$ sign-skew-symmetric integer matrix. The set $\mathbf{x}$ is called a {\em cluster} and its elements $x_i$ are the {\em initial cluster variables}.
The tuple $\mathbf{p}$ is called the coefficient tuple
and $B$ is called the {\em exchange matrix} of the seed $\Sigma$.
Note that we will mainly restrict to the coefficient-free case.
For $j\in [1, n]$, the {\em adjacent cluster} $\mathbf{x}_j$ is
$\mathbf{x}_j=\mathbf{x}-\lbrace x_j\rbrace\cup\lbrace x'_j\rbrace$, where the cluster variables $x_j$ and $x'_j$
are related by the exchange relation
$$
x_jx'_j:=P_j(\mathbf{x}):=p_j^{+}\prod_{b_{ij}>0} x_i^{b_{ij}} + p_j^{-}\prod_{b_{ij}<0} x_i^{-b_{ij}} .
$$
Let $\mathbb{ZP}[\mathbf{x}]=\mathbb{ZP}[x_1, \cdots, x_n]$ (resp.,
$\mathbb{ZP}[\mathbf{x}^{\pm 1}]=\mathbb{ZP}[x_1^{\pm 1}, \cdots, x_n^{\pm 1}]$)
denote the polynomial ring
(resp., Laurent polynomial ring) in $x_1, \cdots, x_n$ with coefficients in $\mathbb{ZP}$.

For a given seed $\Sigma$, the upper bound cluster algebra
$\mathcal{U}(\Sigma)$ is the
$\mathbb{ZP}$-subalgebra of $\mathcal{F}$ given by
$\mathcal{U}(\Sigma)=\mathbb{ZP}[\mathbf{x}^{\pm 1}]\cap\mathbb{ZP}[\mathbf{x}_1^{\pm 1}]
\cap\cdots \cap \mathbb{ZP}[\mathbf{x}_n^{\pm 1}]$, as
defined by Berenstein, Fomin and Zelevinsky in~\cite{BFZ}.
Let $B=(b_{ij})$ and $B'=(b_{ij}')$ be real square matrices of the same size.
$B'$ is called the {\em mutation of the matrix $B$ in direction
$k$} (denoted $B'=\mu_k(B)$) if
\begin{align*}
b_{ij}' = \begin{cases}
-b_{ij} & \text{if }\ i=k\ \text{or}\ j=k,\\
b_{ij}+\frac{|b_{ik}|b_{kj}+b_{ik}|b_{kj}|}{2}& \text{otherwise. }\
\end{cases}
\end{align*}
Let $\Sigma=(\mathbf{x}, \mathbf{p}, B)$ be a seed and $k\in [1, n]$. A seed
$\Sigma'=(\mathbf{x}', \mathbf{p}', B')$ is called {\em the mutation of $\Sigma$ in direction $k$} if,
$\mathbf{x}'=\mathbf{x}_k$, $B'=\mu_k(B)$ and $\mathbf{p}'=(p'^{\pm}_1, \cdots, p'^{\pm}_n)$, where
$p'^{\pm}_k=p^{\mp}_k$ and for $i\neq k$,
\begin{align*}
p'^{+}_i/p'^{-}_i = \begin{cases}
(p^{+}_k)^{b_{ki}}p^{+}_i/p^{-}_i & \text{if }\ b_{ki}\geq 0,\\
(p^{-}_k)^{b_{ki}}p^{+}_i/p^{-}_i& \text{if }\ b_{ki}\leq 0.\
\end{cases}
\end{align*}
Seed mutations (resp., matrix mutations) give rise to an equivalence relation among seeds (resp.,
square matrices), which is called mutation equivalence and denoted by $\Sigma'\sim \Sigma$
(resp., $B'\sim B$). A seed $\Sigma=(\mathbf{x}, \mathbf{p}, B)$ is called {\em totally mutable} if every
matrix mutation equivalent to $B$ is sign-skew-symmetric.
Let $\Sigma_0$ be a totally mutable seed. The {\em upper cluster algebra}
$\overline{\mathcal{A}}=\overline{\mathcal{A}}(\Sigma_0)$ defined by $\Sigma_0$ is the intersection of
the subalgebras
$\mathcal{U}(\Sigma)\subseteq \mathcal{F}$ for all seeds $\Sigma\sim\Sigma_0$. Upper cluster algebra
were defined and first investigated by Berenstein, Fomin and
Zelevinsky in~\cite{BFZ}. They proved that the coordinate ring of any double Bruhat cell in a
semisimple complex Lie group is isomorphic to its
upper cluster algebra defined in terms of relevant combinatorial data.

The {\em lower bound cluster algebra}
$\mathcal{L}(\Sigma)$ associated with a given seed $\Sigma$ is defined as
$\mathcal{L}(\Sigma)=\mathbb{ZP}[x_1, x'_1, \cdots, x_n, x'_n]$. In fact $\mathcal{L}(\Sigma)$ is the
$\mathbb{ZP}$-subalgebra of $\mathcal{F}$ generated by the union of the $n+1$ clusters
$\mathbf{x}, \mathbf{x}_1, \cdots, \mathbf{x}_n$. Finally, the {\em cluster algebra}
$\mathcal{A}=\mathcal{A}(\Sigma_0)$ associated with a totally mutable seed $\Sigma_0$
is the $\mathbb{ZP}$-subalgebra of $\mathcal{F}$ generated by the union of all lower bounds
$\mathcal{L}(\Sigma)$ for $\Sigma\sim\Sigma_0$. For any $\Sigma\sim\Sigma_0$ we have
$\mathcal{L}(\Sigma)\subseteq\mathcal{A}(\Sigma_0)\subseteq\overline{\mathcal{A}}(\Sigma_0)\subseteq\mathcal{U}(\Sigma)$ \cite{BFZ}.
Let $\Sigma=(\mathbf{x}, \mathbf{p}, B)$ be a seed. The directed graph of $\Sigma$, is the
graph $\Gamma(\Sigma)=\Gamma(B)$ with the vertices $1, \cdots, n$ and the directed
edges $(i, j)$ whenever $b_{ij}>0$.
A seed $\Sigma$ is called {\em acyclic} if $\Gamma(\Sigma)$ has no oriented cycle. A monomial in
$x_1, x_1', \cdots, x_n, x_n'$ is called {\em standard} if it contains no product of the form $x_jx_j'$.
Let $\mathcal{L}(\Sigma)$ be the lower cluster algebra associated with a seed $\Sigma$
and let $I$ be the ideal of relations among its generators $x_1, x_1', \cdots, x_n, x_n'$
for $\mathcal{L}(\Sigma)$.
Berenstein, Fomin and
Zelevinsky showed in \cite{BFZ} that if $\Sigma$ is acyclic, then the polynomials
$x_jx_j'-P_j(\mathbf{x})$ with $j\in [1, n]$ generate the ideal $I$.
Moreover, these polynomials form a Gr\"{o}bner basis for $I$
with respect to any term order in which $x_1', \cdots, x_n'$ are much more expensive than
$x_1, \cdots, x_n$ (see~\cite[Corollary 1.17]{BFZ}).
The authors also proved that the cluster algebra $\mathcal{A}(\Sigma)$ associated with a totally mutable seed
$\Sigma$ is equal to the lower bound $\mathcal{L}(\Sigma)$ if and only if $\Sigma$ is
acyclic, \cite[Theorem 1.20]{BFZ}.
The collection of elements in all clusters obtained
through arbitrary sequences of mutations are called the {\em cluster variables} for $\Sigma$.

It is often convenient to work with quivers instead of matrices in the seeds. Let us recall this notion here.
A quiver is a quadruple $Q=(Q_0, Q_1, s, t)$
formed by a set of vertices $Q_0$, a set of arrows $Q_1$ and two maps $s$ and $t$ from $Q_1$ to $Q_0$ which send
an arrow $a$ respectively to its source $s(a)$ and its target $t(a)$. By a quiver, we always mean a finite quiver without loops nor
$2$-cycles. There is a $1$-$1$ correspondence between quivers and skew-symmetric integer matrices
(up to reordering the columns). A quiver $Q$ is given by the skew-symmetric matrix $B$ whose coefficient
$b_{ij}$ is the difference between the number of arrows from $i$ to $j$ and the number of arrows from $j$ to
$i$. The {\em mutation of the quiver $Q$ in direction
$k$} (denoted $\mu_k(B)$) is the quiver $Q'$ obtained from $Q$ in the following three steps:
\begin{itemize}
\item[1)] For every path $i\rightarrow k\rightarrow j$ add one arrow $i\rightarrow j$,
\item[2)] Reverse all arrows incident with $k$,
\item[3)] Delete $2$-cycles.
\end{itemize}
Quiver mutations give rise to an equivalence relation among quivers, which is called mutation
equivalence and denoted by $Q'\sim Q$. In this paper we work with the seeds of the form
$\Sigma=(\mathbf{x}, \mathbf{p}, Q)$, where $Q$ is a quiver which is mutation equivalent to
an acyclic quiver.

Let us now recall the definition of a cluster category.
Throughout the paper, let $k$ be an algebraically closed field.
Let $H$ be a hereditary finite dimensional $k$-algebra and let $\mathcal{D} = D^b({\rm mod} H)$ be
the bounded derived category of finitely generated right $H$-modules
with shift functor $[1]$. Also, let $\tau$ be the $AR$-translation
in $\mathcal{D}$. For background, we refer to the book~\cite{happel} by Happel.
The {\em cluster category} $\mathcal{C}_H$ is defined as the orbit
category $\mathcal{C}_H = \mathcal{D} /F$, where $F = \tau^{-1}[1]$.
The objects of $\mathcal{C}_H$ are the same as the objects of
$\mathcal{D}$, but maps are given by $\Hom_{\mathcal{C}_H}(X, Y )
=\bigoplus_{i\in \mathbb{Z}} \Hom_\mathcal{D}(X, F^iY)$. An object $\widetilde{T}$ in
$\mathcal{C}_H$ is called {\em cluster-tilting} provided for any object
$X$ of $\mathcal{C}_H$, we have $\Ext^1_{\mathcal{C}_H}(\widetilde{T}, X) = 0$ if
and only if $X$ lies in the additive subcategory ${\rm add}(\widetilde{T})$ of
$\mathcal{C}_H$ generated by $\widetilde{T}$. Let $\widetilde{T}$ be a cluster-tilting
object in $\mathcal{C}_H$.
The {\em cluster-tilted algebra associated to $\widetilde{T}$} is the algebra
$\End_{\mathcal{C}_H}(\widetilde{T})^{op}$ \cite{BMRRT, BMR1}.
It was proved in \cite{BMR2, CCS2} that the quivers
of the cluster-tilted algebras of a given simply-laced Dynkin type
are precisely the quivers of the exchange matrices of the cluster
algebra of the same type. An object $M\in\mathcal{C}_H$ is called rigid if $\Ext^1_{\mathcal{C}_H}(M, M) = 0$.
Let $Q$ be an acyclic quiver with $n$ vertices,
let $kQ$ be its path algebra. This is a hereditary finite dimensional
$k$-algebra and we can thus take its cluster
category $\mathcal C=\mathcal{C}_{kQ}$ as above.

For the moment we consider coefficient-free case; let
$\mathcal{A}=\mathcal{A}(\mathbf{x}, Q)$ be the cluster algebra associated
to $Q$. Then there is a map $\varphi:\mathcal C \to \mathcal A$ inducing bijections (see \cite{BMRRT, CK1}):
\[
\begin{array}{clc}
\{\text{indecomposable rigid objects in $\mathcal{C}$}\} & \stackrel{\varphi}{\longrightarrow}
 &\{\text{cluster variables in $\mathcal{A}$}\}\\
\{\text{cluster-tilting objects in $\mathcal{C}$}\} & \longrightarrow &\{\text{clusters in $\mathcal{A}$}\}\\
T=T_1\oplus \cdots\oplus T_n & \longmapsto & \mathbf{x}_T=\{x_{T_1}, \cdots, x_{T_n}\}\\
\{\text{seeds $(\mathbf{x}, Q)$}\} & \longrightarrow &\{\text{cluster-tilting objects $T$}\}
\end{array}
\]
We can thus associate a cluster-tilted algebra $\End_{\mathcal{C}}(T)^{op}$ to any seed
$(\mathbf{x}, Q)$. The cluster-tilted algebra $\End_{\mathcal{C}}(T)^{op}$ is the quotient
$kQ/I$ of the path algebra $kQ$ by an ideal $I$ where the quiver $Q$ is the same as the quiver
of the seed $(\mathbf{x}, Q)$.
When the cluster algebra is of finite type, a system of relations for the ideal $I$ explicitly described in \cite{CCS2} and \cite{BMR3}.

We now introduce the notion of projective cluster variables, related to the projective
indecomposable objects of $kQ$.

\begin{definition}\label{def:proj-cl-var}
Let $Q$ be an acyclic quiver, $\Sigma=(\mathbf{x}, Q)$ be a seed
and $kQ=\oplus_{i=1}^{n}P_i$, where each $P_i$ is an indecomposable projective $kQ$-module and $P_i\ncong P_j$ for each $i\neq j$.
Then we define the {\em projective cluster variables with respect to} $\Sigma$ to be the
cluster variables $x_{P_i}:=\varphi(P_i)$, $1\leq i\leq n$.
We will sometimes say that the $x_{P_i}$, $1\leq i\leq n$, are the projective cluster variables
of the cluster algebra $\mathcal A(\mathbf{x}, Q)$.
\end{definition}

\begin{example} \label{ex1}
Let $\Sigma=(\mathbf{x}, Q)$, where $\mathbf{x}=\{x_1, x_2\}$ and $Q$ is the
quiver $$\hskip.5cm \xymatrix{
{2} \ar[r]&{1}.\\
}
\hskip.5cm$$
By \cite[Theorem 2.2]{BMRT}, $x_{P_i}=f(x_1, x_2)/m(x_1, x_2)$, as a reduced fraction, where
$m(x_1, x_2)$ is the monomial $x_1^{d_1}x_2^{d_2}$
for $(d_1, d_2)$ the dimension vector $P_i$ and where $f$ is a polynomial in the $x_i$'s.
The projective indecomposable $kQ$ modules are $P_1=S_1$ (the simple module at vertex 1)
with dimension vector $(0,1)$ and $P_2$ the indecomposable module with dimension vector $(1,1)$.
Therefore $x_{P_1}=\dfrac{1+x_2}{x_1}$ and $x_{P_2}=\dfrac{1+x_1+x_2}{x_1x_2}$.
\end{example}

Let $H$ be a hereditary algebra, $\mathcal{C}_H$ be the corresponding cluster category and $T$ be
a cluster-tilting object in $\mathcal{C}_H$. We associate with $T$ the pair $(T,Q_T)$, called a
{\em tilting seed}, where
$Q_T$ is the quiver of $\End_{\mathcal{C}_H}(T)^{op}$.

For each $1\leq i\leq n$, the {\em mutation of $(T, Q_T)$ in direction $i$} is defined to be
$\delta_i(T, Q_T)=(T', Q_T')$, where $T'$ is the cluster-tilting object of $\mathcal{C}_H$ obtained
by exchanging the direct summand $T_i$ with $T_i^*$
(see \cite{BMRRT} and \cite{BMR2} for more details).

Let $\mathcal{A}$ be an acyclic cluster algebra. Then there exists a seed $\Sigma=(\mathbf{u}, Q)$,
where $Q$ is an acyclic quiver and
$\mathbf{u}=\{u_1, u_2, \cdots, u_n\}\subseteq \mathbb{Q}(x_1, \cdots, x_n)$ is a transcendence
basis of $\mathbb{Q}(x_1, \cdots, x_n)$ over $\mathbb{Q}$.
We take such a seed $\Sigma$ as the initial seed.
Buan, Marsh and Reiten have defined a map $\alpha$ in~\cite{BMR2}
associating tilting seeds to seeds for acyclic cluster algebras,
Let $(\mathbf{u}', Q')$ be an arbitrary seed. Then
$(\mathbf{u}', Q')=\mu_{i_t}\cdots \mu_{i_1}(\mathbf{u}, Q)$ for some ordered sequence
$(i_1, \cdots, i_t)$. This sequence in not unique in general and we choose one of minimal length.
The sequence of length zero is the empty set $\varnothing$ - it is associated with the initial seed.
Let $H=kQ$. The map $\alpha$ from seeds to tilting seeds is defined as follows:
$\alpha((\mathbf{u}, Q), \varnothing)=(H[1], Q)$ and
$\alpha((\mathbf{u}', Q'), (i_1, \cdots, i_t))=\delta_{i_t}\cdots \delta_{i_1}(H[1], Q)=(T', Q_{T'})$.
We choose an indexing for the summands $H_i$ of $H$
such that cluster variable $u_i$ (of the cluster $\mathbf{u}$) corresponds to $H_i$.
The induced correspondence between the cluster
variables $u_i'$ in the cluster $\mathbf{u}'$ and the indecomposable direct summands $T_i'$ in $T'$,
is also denoted by $\alpha$.
By \cite[Theorem 6.1]{BMR2}, for every $j\in \{1, 2, \cdots, n\}$
there is a commutative diagram
$$
\xymatrix{
{((\mathbf{u}', Q'), (i_1, \cdots, i_t))}\ar[d]_{\mu_j} \ar[r]^-{\alpha}&{(T', Q_{T'})}\ar[d]^{\delta_j}\\
{((\mathbf{u}'', Q''), (i_1, \cdots, i_t, j))}\ar[r]_-{\alpha}&{(T'', Q_{T''})}\\
}
$$
where $\mathbf{u}''$ is the cluster obtained from $\mathbf{u}'$ by replacing $u_j'\in \mathbf{u}'$ by
$u_j''$ using $\mu_j$ and $T''$ is the cluster tilting object in $\mathcal{C}_H$ obtained by
exchanging the indecomposable summand $T_j'$ by $T_j''$ (using $\delta_j$).

In the following, we extend the notion of projective cluster variables to acyclic cluster algebras
before generalising it
to cluster algebras with coefficients.

\begin{definition}\label{def:proj-cl-var-acyclic}
Let $\mathcal{A}(\mathbf{x}, Q)$ be an acyclic cluster algebra and let $(\mathbf{u}, Q')$ be an
acyclic seed such that $\mathcal{A}(\mathbf{x}, Q)= \mathcal{A}(\mathbf{u}, Q')$. Let
$\mathcal C=\mathcal C_{Q'}$ be the
cluster category corresponding to the seed $(\mathbf{u},Q')$ and $(T', Q)$ be a tilting seed corresponding to the seed $(\mathbf{x}, Q)$.
Then we define the {\em projective cluster variables} with respect to the seed $(\mathbf{x},Q)$
to be the cluster variables $x_{P_i}:=x_{T_i'}=\varphi(T_i')$, where $T'=\oplus T_i'$ and $\varphi$ is the bijection described earlier.
\end{definition}

\begin{remark}
Note that in the above definition, the acyclic seed $(\mathbf{u}, Q')$ is not unique, but \cite[Theorem 6.1]{BMR2} shows that the definition is independent of the choice of acyclic seed $(\mathbf{u}, Q')$.
\end{remark}

\begin{remark}\label{rem:projective variables1}
The motivation to call these variables projective cluster variables arises from the following.
In the setting of Definition~\ref{def:proj-cl-var-acyclic}, the functor $\Hom_{\mathcal{C}}(T',-)$ induces an equivalence of
categories $\mathcal{C}/ \add(\tau T')\rightarrow \modd$-$\End_{\mathcal{C}}(T')^{op}$, \cite{BMR1}. Under this equivalence, every $\Hom_{\mathcal{C}}(T',T'_i)=:P_i$ is an indecomposable projective
$\End_{\mathcal{C}}(T')^{op}$-module ($i=1,\dots, n$).
Consider the projective cluster variable $x_{P_i}=\varphi(T_i')$, an element of
$\mathcal A(\mathbf{x},Q)=\mathcal A(\mathbf{u},Q')$.
By the Laurent phenomenon~\cite[Theorem 3.1]{FZ1},
this can be expressed as a reduced fraction whose denominator is a monomial in the
$(u_1,\dots, u_n)$ of $\mathcal A(\mathbf{u},Q')$.

Let $H=kQ'$. By \cite[Theorem 2.2]{BMRT}, there is a unique indecomposable $H$-module $M$ such that
$x_{P_i}=f(u_1, \cdots, u_n)/m(u_1, \cdots, u_n)$, as a reduced fraction, where
$m(u_1, \cdots, u_n)$ is the monomial $u_1^{d_1}u_2^{d_2}\cdots u_n^{d_n}$
for $(d_1,\dots,d_n)$ the dimension vector $M$
and where $f$ is a polynomial in the $u_i$'s.

In case $Q$ is acyclic itself, then $H=kQ$ and the module $M$ for $x_{P_i}$ is just $P_i$. So
$x_{P_i}=f(x_1, \cdots, x_n)/x_1^{d_1}x_2^{d_2}\cdots x_n^{d_n}$ where
$(d_1, d_2, \cdots, d_n)$ is the dimension vector of the indecomposable projective $H$-module $P_i$. We will later use these expressions to calculate projective cluster variables.
\end{remark}

\begin{example} \label{ex2}
Let $\Sigma=(\mathbf{x}, Q)$, where $\mathbf{x}=\{x_1, x_2, x_3\}$ and $Q$ is the
quiver
$$
\hskip.5cm
\xymatrix{
&{2} \ar[dr]&\\
{1}\ar[ur]\ar@{<-}[rr]&&{3}
}
\hskip.5cm
$$
If we mutate at 2, we get an acyclic seed $(\mathbf{u},Q')=\mu_2(\mathbf{x},Q)$ where
$Q'$ is the quiver
$$
\xymatrix{
&{2} \ar[dl]&\\
{1}&&{3}\ar[ul]
}
$$
and $\mathbf{u}=\{u_1, u_2, u_3\}$ with
$u_1=x_1, u_2=\dfrac{x_1+x_3}{x_2}, u_3=x_3$.
Let $T=P_1\oplus P_2\oplus P_3$, where $P_i$ is an indecomposable projective $kQ'$-module, then $(T, Q')$ is a tilting seed corresponding to the seed $(\mathbf{u}, Q')$. Therefore $\delta_2(T, Q')=(T', Q)$ is a tilting seed corresponding to $(\mathbf{x}, Q)$, where $T'=P_1\oplus S_3\oplus P_3$ and $S_3$ is the simple $kQ'$-module corresponding to the vertex 3 of the quiver $Q'$. The cluster variable $x_{P_1}$ is
$\mu_1(u_1)$ in the seed $(\mathbf{u}, Q')$, and hence $x_{P_1}=\dfrac{1+u_2}{x_1}=\dfrac{1+\dfrac{x_1+x_3}{x_2}}{x_1}=\dfrac{x_1+x_2+x_3}{x_1x_2}$.  The variable $x_{P_2}$ is $\mu_3(u_3)$ in the
seed $(\mathbf{u}, Q')$, and hence $x_{P_2}=\dfrac{1+u_2}{x_3}=\dfrac{1+\dfrac{x_1+x_3}{x_2}}{x_3}=\dfrac{x_1+x_2+x_3}{x_2x_3}$. Lastly, $x_{P_3}$ is $\mu_3(u_3)$ in the
seed $\mu_2\mu_1(\mathbf{u}, Q')$, and one computes that
$x_{P_3}=\dfrac{x_1+x_2+x_3}{x_1x_3}$.
\end{example}

In the next step, we extend the definition of projective cluster variables to the case of cluster algebras
with coefficients, first for acyclic seeds and then for acyclic cluster algebras with coefficients. The
two parts (i) and (ii) of the definition specialise to the notions from Definition~\ref{def:proj-cl-var} and
\ref{def:proj-cl-var-acyclic} respectively, in the coefficient-free case.

\begin{definition}\label{def:proj-cl-var1}
Let $\Sigma=(\mathbf{x},\mathbf{p},Q)$ be a seed.
\begin{itemize}
\item[(i)]
Assume that $Q$ is an acyclic quiver. Let $(d_1,\dots, d_n)$ be the dimension vector
of the indecomposable projective $kQ$-module $P_i$. Then the {\em projective cluster variable}
$x_{P_i}$ {\em with respect to the seed $\Sigma$}
is defined to be the cluster variable in $\mathcal A(\mathbf{x},\mathbf{p},Q)$
which in its reduced form has the denominator $x_1^{d_1}x_2^{d_2}\cdots x_n^{d_n}$.
We do this for every $i=1,\dots,n$.
\item[(ii)]
Assume that $\Sigma=(\mathbf{x},\mathbf{p},Q)$ is mutation equivalent to an
acyclic seed. Then $(\mathbf{x}, Q)$ is also mutation equivalent to an acyclic seed. Let $x_{P_i'}$ be the projective cluster variables of $(\mathbf{x}, Q)$. There is a sequence of mutations of minimal length $\underline{\mu}=\mu_t\cdots \mu_1$ such that $x_{P_i'}$ is a cluster variable in the seed $\underline{\mu}(\mathbf{x},Q)$.
The {\em projective cluster variable $x_{P_i}$ with respect to $\Sigma$} is the corresponding cluster variable in
$\underline{\mu}(\Sigma)$. We do this for every $i=1,\dots,n$.
\end{itemize}
\end{definition}

In this paper, we will only deal with coefficient-free cases and with acyclic seeds in the case
with coefficients. We included part (ii) of the definition as a natural extension.

\begin{example} \label{ex3}
\begin{itemize}
\item[(i)]
Let $\Sigma=(\mathbf{x}, \mathbf{p}, Q)$, where $\mathbf{x}=\{x_1, x_2\}$, $\mathbf{p}=(p^{\pm}_1, p^{\pm}_2)$ and $Q$ is the quiver $$\hskip.5cm \xymatrix{
{2} \ar[r]&{1}.\\
}
\hskip.5cm$$
By definition $x_{P_1}=x'_1=\dfrac{p^{+}_1x_2+p^{-}_1}{x_1}$. Let $\mu_1(\Sigma)=\Sigma^{(1)}=(\mathbf{x}^{(1)}, \mathbf{p}^{(1)}, \mu_1(Q))$,
where $\mathbf{x}^{(1)}=\{x^{(1)}_1, x^{(1)}_2\}$ and
$\mathbf{p}=(p^{(1)\pm}_1, p^{(1)\pm}_2)$. Then by definition $x_{P_2}=\\\dfrac{p_{2}^{(1)+}(p^{+}_1x_2+p^{-}_1)+p_2^{(1)-}x_1}{x_1x_2}$.
\item[(ii)]
Let $\Sigma=(\mathbf{x}, \mathbf{p}, Q)$, where $\mathbf{x}=\{x_1, x_2, x_3\}$, $\mathbf{p}=(p^{\pm}_1, p^{\pm}_2, p^{\pm}_3)$ and $Q$ is the quiver $$\hskip.5cm \xymatrix{
&{2} \ar[dr]&\\
{1}\ar[ur]\ar@{<-}[rr]&&{3}
}
\hskip.5cm$$
We have $x'_1=x^{(1)}_1=\dfrac{p^{+}_1x_3+p^{-}_1x_2}{x_1}, x'_2=x^{(2)}_2=\dfrac{p^{+}_2x_1+p^{-}_2x_3}{x_2}$
and $x'_3=x^{(3)}_3=\dfrac{p^{+}_3x_2+p^{-}_3x_1}{x_3}$.
Let $\mu_2\mu_1(\Sigma)=\Sigma^{(12)}=(\mathbf{x}^{(12)}, \mathbf{p}^{(12)}, \mu_2\mu_1(Q))$,
where $\mathbf{x}^{(12)}=\\\{x^{(12)}_1, x^{(12)}_2, x^{(12)}_3\}$ and
$\mathbf{p}=(p^{(12)\pm}_1, p^{(12)\pm}_2, p^{(12)\pm}_3)$. By definition
$x_{P_1}=x_2^{(12)}=\dfrac{p_{2}^{(1)+}x_1+p_2^{(1)-}(p^{+}_1x_3+p^{-}_1x_2)}{x_1x_2}$,
$x_{P_2}=x_2^{(32)}=\dfrac{p_{2}^{(3)+}(p^{+}_3x_2+p^{-}_3x_1)+p_2^{(3)-}x_3}{x_2x_3}$ and
$x_{P_3}=x_3^{(13)}=\dfrac{p_{3}^{(1)+}(p^{+}_1x_3+p^{-}_1x_2)+p_3^{(1)-}x_1}{x_3x_1}$.
\end{itemize}
\end{example}

\begin{definition}\label{def:lower-bound-proj}
Let $\Sigma=(\mathbf{x}, \mathbf{p}, Q)$ be a seed of an acyclic cluster algebra, with $\mathbf{x}=\{x_1,\dots, x_n\}$.
The {\em lower bound cluster algebra generated by projective cluster variables
$\mathcal{L}_P(\Sigma)$ associated with $\Sigma$
is defined as} $\mathcal{L}_P(\Sigma)=\mathbb{ZP}[x_1, x_{P_1}, \cdots, x_n, x_{P_n}]$,
where the $x_{P_i}$ are the projective cluster variables from Definition~\ref{def:proj-cl-var1}.
\end{definition}

In the coefficient-free case, the lower bound cluster algebra generated by projective cluster variables
is defined accordingly using Definitions~\ref{def:proj-cl-var} or~\ref{def:proj-cl-var-acyclic}.
By definition, $\mathcal{L}_P(\Sigma)$ is a finitely generated subalgebra of the
cluster algebra $\mathcal{A}(\Sigma)$.

%%%%%%%%%%%%%%%%%%%%%%%%
\subsection{Related work}

The notion of projective cluster variables has potential applications to green-to-red sequences and in the context of
PBW basis for a quantum unipotent subgroup. We explain this briefly.

(1) If there exists a green-to-red sequence for the
cluster algebra $\mathcal A(\Sigma)$, the projective cluster variables are the same as the cluster
variables in the ``all-red'' seed: In unpublished work, in the acyclic case,
Greg Muller has found a version of Theorem~\ref{thm1}. In the language of green-to-red sequences,
the lower bound algebra (in the acyclic case) can be generated
the cluster variables in a maximal green sequence, obtained by mutating at a sequence of sources. This is then
precisely the initial cluster variables with the projective cluster variables.
Motivated by this, Greg Muller posed the question whether the cluster variables in a maximal green
sequence always generate a cluster algebra.

(2)
Fan Qin pointed out to us that for acyclic seed with $z$-pattern (in the language of \cite{KQ}),
the standard monomial basis we introduce should agree with an associated dual PBW basis;
One chooses appropriated geometric coefficients for an acyclic seed such that the
(quantized) cluster algebra
(where the unfrozen variables are not invertible) corresponds to the quantum unipotent subgroup
$\mathcal A_q [c^2]$
associated to the Weyl group element $c^2$, where $c$ is a Coxeter word.

Fan Qin also points out that the standard monomials we introduce - or a version with injectives, in the convention of~\cite{KQ}
are important for defining the triangular basis. These standard monomials do not necessarily form a basis but form
a topological basis in the topology given in the work~\cite[\S 2.2.2]{DM} of Davison and Mandel.

%%%%%%%%%%%%%%%%%%%%%%%%%%%%%%%%%%%%%%
\section{Cluster algebras with initial acyclic seeds}\label{sec 3}

In this section we work in the setting of cluster algebras with coefficients. We
show that for any acyclic seed $\Sigma=(\mathbf{x}, \mathbf{p}, Q)$,
the lower bound cluster algebra generated by projective cluster variables $\mathcal{L}_P(\Sigma)$
and the cluster algebra $\mathcal{A}(\Sigma)$ coincide.

\begin{lemma}\label{lemm1}
Let $\Sigma=(\mathbf{x}, \mathbf{p}, Q)$ be an acyclic seed, let $\{1,2,\cdots,n\}$ be the vertices of $Q$.
Let $v$ be a source in $Q$,
$v_1, \cdots, v_t$ be direct successors of the vertex $v$ and $a_i$ be the number of arrows from $v$ to $v_i$ in $Q$. Then $x'_v=b(x_{v_1}^{a_1}\cdots x_{v_t}^{a_t}x_{P_v}-f)$, where $b\in \mathbb{ZP}$, $f\in \mathbb{ZP}[x_v, x_{v_1}, \cdots, x_{v_t}, x_{P_{v_1}}, x_{P_{v_2}}, \cdots, x_{p_n}]$.
\end{lemma}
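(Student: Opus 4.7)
The plan is to derive a ``dual'' exchange identity for $x_{P_v}$ parallel to the standard one for $x_v'$, and then to combine the two. Since $v$ is a source of $Q$, all entries $b_{iv}$ are non-positive, with $b_{v_i,v} = -a_i$, so the exchange relation collapses to $x_v x_v' = p_v^+ + p_v^-\,x_{v_1}^{a_1}\cdots x_{v_t}^{a_t}$. On the module side, the projective $kQ$-module $P_v$ has $\dim(P_v)_v = 1$, radical $\rad P_v \cong \bigoplus_i P_{v_i}^{a_i}$, and the only submodule of $P_v$ with nonzero $v$-component is $P_v$ itself, because any such submodule contains a scalar multiple of the cyclic generator $e_v$ and therefore all of $P_v$.

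I would then apply the Caldero--Chapoton formula to $x_{P_v}$ and split the sum over subrepresentations by $e_v \in \{0,1\}$. The single $e_v = 1$ term contributes a Laurent monomial that simplifies to a factor $q_v^+/x_v$ via the identity $\sum_{j\to w} d_j - d_w = -\delta_{w,v}$, valid for $d = \underline{\dim}\,P_v$ when $v$ is a source (using that $d_w$ counts paths from $v$ to $w$). The $e_v = 0$ piece runs over submodules of $\rad P_v$ and, by multiplicativity of the cluster character on direct sums, rearranges as $q_v^-\,x_{\rad P_v}/x_v = q_v^-\,\prod_i x_{P_{v_i}}^{a_i}/x_v$. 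Together these yield the dual identity $x_v x_{P_v} = q_v^+ + q_v^-\prod_i x_{P_{v_i}}^{a_i}$ for suitable $q_v^\pm \in \mathbb{ZP}$ (in the coefficient-free case, $q_v^+ = q_v^- = 1$). Multiplying this dual identity by $\prod_i x_{v_i}^{a_i}$ and subtracting a $\mathbb{ZP}$-multiple of the exchange relation for $x_v'$ eliminates the common $1/x_v$ pole and produces $x_v' = b(\prod_i x_{v_i}^{a_i}\,x_{P_v} - f)$ for some $b \in \mathbb{ZP}$ absorbing the coefficient comparison.

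Membership of $f$ in the prescribed polynomial subring reduces to the congruence $\prod_i (x_{v_i} x_{P_{v_i}})^{a_i} \equiv 1 \pmod{x_v}$ (up to a unit in $\mathbb{ZP}$) in the Laurent polynomial ring. This holds because $v$ is a source, so $v$ does not lie in the support of $P_{v_i}$: hence $x_v$ does not appear in the denominator of any $x_{P_{v_i}}$, and a direct evaluation of $x_{v_i} x_{P_{v_i}}$ at $x_v = 0$ via the Caldero--Chapoton expansion returns a unit. The main obstacle is the coefficient bookkeeping in the dual exchange identity: computing $q_v^\pm$ explicitly in terms of $p_v^\pm$ and of the coefficients of the once-mutated seed (as seen transparently in Example~\ref{ex3}, where the mutation rules identify $q_v^\pm$ with the coefficients $p_v^{(1)\mp}$), so that they combine cleanly into the single element $b \in \mathbb{ZP}$ in the final identity.
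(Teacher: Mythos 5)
Your overall strategy is the same as the paper's: everything reduces to the ``dual exchange identity'' $x_vx_{P_v}=q_v^{+}+q_v^{-}\prod_i x_{P_{v_i}}^{a_i}$ at the source $v$, after which one multiplies by $\prod_i x_{v_i}^{a_i}$ and matches the surviving pole at $x_v$ against the exchange relation $x_vx_v'=p_v^{+}+p_v^{-}\prod_i x_{v_i}^{a_i}$. Where you differ is in how the dual identity is obtained: you extract it from the Caldero--Chapoton expansion of $P_v$ (splitting the Grassmannian sum by $e_v\in\{0,1\}$ and using $\rad P_v\cong\oplus_iP_{v_i}^{a_i}$; this is essentially the paper's Lemma~\ref{lemm2} specialised to a source), whereas the paper realises each $x_{P_{v_i}}$ and then $x_{P_v}$ by an explicit mutation sequence and reads the identity off as an honest exchange relation in the mutated seed, recording the mutated coefficients as it goes. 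Your route is cleaner in the coefficient-free case, but the lemma is stated for seeds with coefficients, and that is where the proposal has a genuine gap.

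Concretely, the cluster character you invoke is only set up here without coefficients, so it does not produce the elements $q_v^{\pm}\in\mathbb{ZP}$; and the step ``subtracting a $\mathbb{ZP}$-multiple of the exchange relation eliminates the common $1/x_v$ pole'' requires the identity
$$\frac{q_v^{-}}{q_v^{+}}\,\prod_i\bigl(r_i^{-}\bigr)^{a_i}\;=\;\frac{p_v^{+}}{p_v^{-}},$$
where $r_i^{-}$ denotes the value of $x_{v_i}x_{P_{v_i}}$ at $x_v=0$. Without this, the two fractions with denominator $x_v$ do not combine for any choice of $\lambda\in\mathbb{ZP}$ and an error term $c/x_v$ survives that cannot be absorbed into $f$. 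You correctly identify this as ``the main obstacle'' but do not carry it out; it is precisely the computation that occupies most of the paper's proof (tracking $p_v^{(1)\pm}$ and $p_{v_i}^{(i+1)\pm}$ through the mutation sequence), and in order even to define $q_v^{\pm}$ one must realise $x_{P_v}$ by mutations, i.e.\ fall back on the paper's bookkeeping. A second, smaller gap: to place $f$ in the prescribed subalgebra (or at least in $\mathcal{L}_P(\Sigma)$, which is what Theorem~\ref{thm1} needs) it is not enough that $x_{v_i}x_{P_{v_i}}$ has unit constant term at $x_v=0$; one needs the full exchange form $x_{v_i}x_{P_{v_i}}=r_i^{-}+r_i^{+}x_v^{a_i}\prod_j x_{v_{i_j}}^{a_{i_j}}\prod_j x_{P_{w_{i_j}}}^{b_{i_j}}$ as in the paper's \eqref{eq1}, since a priori the quotient $(x_{v_i}x_{P_{v_i}}-r_i^{-})/x_v^{a_i}$ is only a Laurent polynomial in the initial variables (your congruence is also taken in $\mathbb{ZP}[x_v][x_j^{\pm1}:j\neq v]$ rather than in the full Laurent ring, where $x_v$ is invertible). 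Both gaps are repairable, but repairing them essentially reproduces the paper's argument.
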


\begin{proof} We label the vertices of $Q$ from
$1$ to $n$ such that if there exists a path from $i$ to $j$, then $i> j$.

Locally, the quiver $Q$ is of the form
\begin{center}
$
\includegraphics[width=3cm]{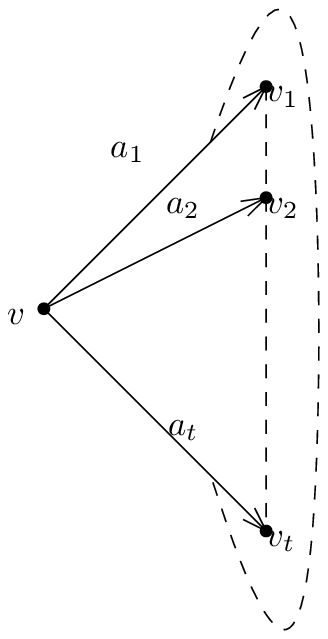}
$
\end{center}
(with $v>v_1>v_2>\cdots v_t$)
and we have $x'_v=\dfrac{p_v^{+}+p_v^{-}x_{v_1}^{a_1}x_{v_2}^{a_2}\cdots x_{v_t}^{a_t}}{x_v}$.
Consider one of these vertices $v_i$, $1\leq i\leq t$,
let $v, v_{i_1}, \cdots, v_{i_{s_i}}$ be the direct predecessors of $v_i$ and
$w_{i_1}, \cdots, w_{i_{t_i}}$ the direct successors of $v_i$. Label the $w_{i_j}$ in such a way that
$\{w_{i_1}, \cdots, w_{i_{r_i}}\}\subseteq \{v_{i+1}, \cdots, v_{t}\}$ for some $r_i$ (possibly, there are no such),
let $a_{i_j}$ be the number of arrows from $v_{i_j}$ to $v_i$ in $Q$ and
$b_{i_j}$ be the number of arrows from $v_i$ to $w_{i_j}$ in $Q$.
Since $Q$ is acyclic, there is a finite set of mutations such that after applying these mutations to the
seed $\Sigma=(\mathbf{x}, \mathbf{p}, Q)$, we obtain a seed
$\Sigma^{(i+1)}=(\mathbf{x}^{(i+1)}, \mathbf{p}^{(i+1)}, Q^{(i+1)})$ where
$\mathbf{x}^{(i+1)}=\lbrace x^{(i+1)}_1, x^{(i+1)}_2, \cdots, x^{(i+1)}_n\rbrace$, with
$x^{(i+1)}_v=x_v$, $x^{(i+1)}_{w_{i_j}}=x_{P_{w_{i_j}}}$ for each $1\leq j\leq t_i$,
$x^{(i+1)}_{v_{i_j}}=x_{v_{i_j}}$ for each $1\leq j\leq s_i$,
$\mathbf{p}^{(i+1)}=(p^{(i+1)\pm}_1, \cdots,p^{(i+1)\pm}_n)$ and $Q^{(i+1)}$ (locally)
is the following quiver (all successor variables for $i$ have changed, all predecessor variables remain
unchanged)
$$
\includegraphics[width=5cm]{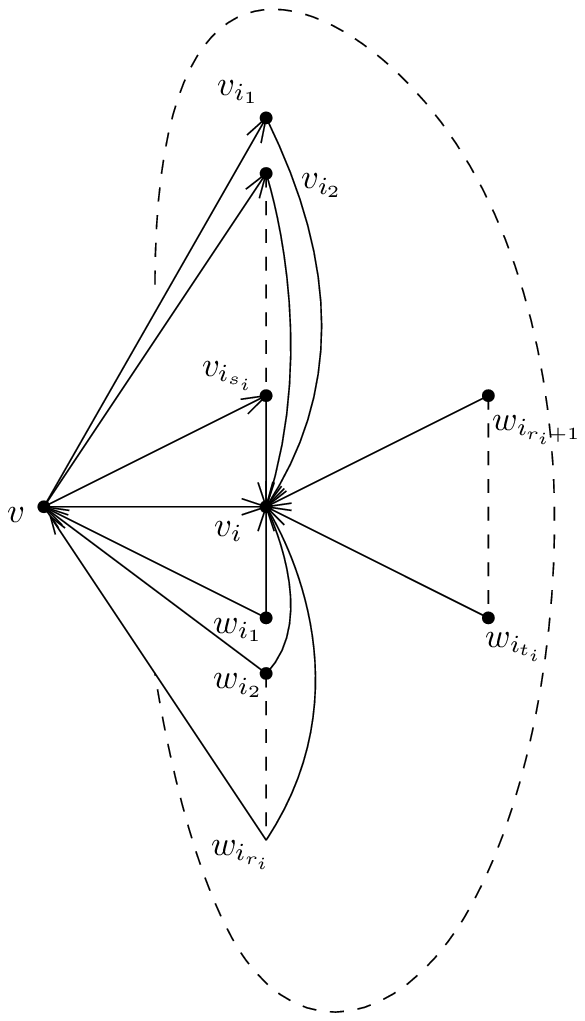}
$$
Also, $p^{(i+1)+}_v/p^{(i+1)-}_v=(p^{-}_{w_{i_{r_i}}})^{-c_{i_{r_i}}}\cdots (p^{-}_{w_{i_{1}}})^{-c_{i_{1}}} p^{+}_i/p^{-}_i$,
where $c_{i_j}$ is the number of arrows from $v$ to $w_{i_j}$.
If we then mutate at the vertex $v_i$, we get
\begin{equation}\label{eq1}
x_{P_{v_i}}=\dfrac{p^{(i+1)+}_{v_i}x_v^{a_i}x_{v_{i_1}}^{a_{i_1}}\cdots x_{v_{i_{s_i}}}^{a_{i_{s_i}}}x_{P_{w_{i_1}}}^{b_{i_1}}\cdots x_{P_{w_{i_{t_i}}}}^{b_{i_{t_i}}}+p^{(i+1)-}_{v_i}}{x_{v_i}},
\end{equation}
where
$p^{(i+1)+}_{v_i}/p^{(i+1)-}_{v_i}=(p^{-}_{w_{i_{t_i}}})^{-b_{i_{t_i}}}\cdots
(p^{-}_{w_{i_{1}}})^{-b_{i_{1}}} p^{+}_{v_i}/p^{-}_{v_i}$.

We now apply this process to all the vertices $v_t,\cdots, v_1$ incident with the source $v$, starting
with $v_t$.
So first we get the seed $\Sigma^{(t+1)}=(\mathbf{x}^{(t+1)}, \mathbf{p}^{(t+1)}, Q^{(t+1)})$.

Then we mutate at the vertex $v_t$ and obtain the seed
$\Sigma^{(t)}=(\mathbf{x}^{(t)}, \mathbf{p}^{(t)}, Q^{(t)})$.
Now we mutate at the vertex $v_{t-1}$ and get the seed $\Sigma^{(t-1)}=(\mathbf{x}^{(t-1)}, \mathbf{p}^{(t-1)}, Q^{(t-1)})$. After applying this process for all the vertices $v_t, \cdots, v_1$ we obtain the
seed $\Sigma^{(1)}=(\mathbf{x}^{(1)}, \mathbf{p}^{(1)}, Q^{(1)})$ where
$\mathbf{x}^{(1)}=\lbrace x^{(1)}_1, x^{(1)}_2, \cdots, x^{(1)}_n\rbrace$, $x^{(1)}_v=x_v$, $x^{(1)}_{v_{i}}=x_{P_{v_{i}}}$ for each $1\leq i\leq t$, $\mathbf{p}^{(1)}=(p^{(1)\pm}_1, \cdots,p^{(1)\pm}_n)$ and $Q^{(1)}$
(locally) is the following quiver.
 $$\includegraphics[width=3.5cm]{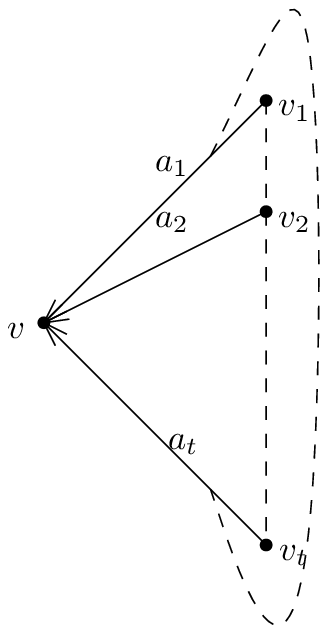}$$
If we mutate $\Sigma^{(1)}=(\mathbf{x}^{(1)}, \mathbf{p}^{(1)}, Q^{(1)})$ at the vertex $v$ we have $x_{P_v}=\dfrac{p_{v}^{(1)+}x_{P_{v_1}}^{a_{1}}\cdots x_{P_{v_t}}^{a_t}+p^{(1)-}_{v}}{x_{v}}$. By using \eqref{eq1} we have $x_{v_1}^{a_1}\cdots x_{v_t}^{a_t}x_{P_v}=\dfrac{p_{v}^{(1)+}(p_{v_1}^{(2)-})^{a_{1}}\cdots(p_{v_t}^{(t+1)-})^{a_t}+p_{v}^{(1)-}x_{v_1}^{a_1}\cdots x_{v_t}^{a_t}}{x_v}+f$, where $f\in \mathbb{ZP}[x_v, x_{v_1}, \cdots, x_{v_t}, x_{P_{v_1}}, x_{P_{v_2}}, \cdots, x_{p_n}]$. Also we have  $p_{v}^{(1)+}/p_{v}^{(1)-}=\\(p_{v_1}^{(2)-})^{-a_{1}}\cdots(p_{v_t}^{(t+1)-})^{-a_t}p_{v}^{+}/p_{v}^{-}$. Therefore $x_{v_1}^{a_1}\cdots x_{v_t}^{a_t}x_{P_v}=(\dfrac{p_{v}^{(1)-}}{p_{v}^{-}})\dfrac{p_v^{+}+p_v^{-}x_{v_1}^{a_1}x_{v_2}^{a_2}\cdots x_{v_t}^{a_t}}{x_v}+f$. Thus $x'_v=\dfrac{p_{v}^{-}}{p_{v}^{(1)-}}(x_{v_1}^{a_1}\cdots x_{v_t}^{a_t}x_{P_v}-f)$ and the result follows.
\end{proof}

\begin{theorem}\label{thm1}
Let $\Sigma=(\mathbf{x}, \mathbf{p}, Q)$ be an acyclic seed.
Then $\mathcal{L}_P(\Sigma)=\mathcal{A}(\Sigma)$.
\end{theorem}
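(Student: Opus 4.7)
The plan is to reduce the theorem, via the Berenstein-Fomin-Zelevinsky theorem, to showing that every exchange variable $x_v'$ lies in $\mathcal{L}_P(\Sigma)$, and then to use Lemma~\ref{lemm1} to obtain this. Since $\Sigma$ is acyclic, Theorem 1.20 of~\cite{BFZ} (recalled in the excerpt) gives $\mathcal{A}(\Sigma)=\mathcal{L}(\Sigma)=\mathbb{ZP}[x_1,x_1',\ldots,x_n,x_n']$, and the inclusion $\mathcal{L}_P(\Sigma)\subseteq\mathcal{A}(\Sigma)$ is immediate from the definitions since each projective cluster variable $x_{P_i}$ is by construction a cluster variable of $\mathcal{A}(\Sigma)$. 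The theorem is therefore equivalent to the assertion that $x_v'\in\mathcal{L}_P(\Sigma)$ for every vertex $v$ of $Q$.

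I would label the vertices of $Q$ so that $i>j$ whenever there is a path from $i$ to $j$, and then proceed by induction on the vertex label from $n$ down to $1$; the vertex $n$ is necessarily a source, and the inductive hypothesis at stage $v$ reads: $x_u'\in\mathcal{L}_P(\Sigma)$ for every vertex $u$ with $u>v$, in particular for every predecessor of $v$. When $v$ is a source, Lemma~\ref{lemm1} supplies the explicit identity $x_v' = b\bigl(x_{v_1}^{a_1}\cdots x_{v_t}^{a_t}x_{P_v} - f\bigr)$ with $f$ a polynomial in the initial cluster variables and the projective cluster variables, giving $x_v'\in\mathcal{L}_P(\Sigma)$ directly. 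When $v$ is not a source, I would adapt the mutation-sequence construction of Lemma~\ref{lemm1} to this setting: build a sequence of mutations starting from $\Sigma$ that arrives at a seed $\Sigma^{(1)}$ whose cluster variable at each direct successor $v_i$ of $v$ equals $x_{P_{v_i}}$, while the variable at $v$ is still $x_v$. Mutating $\Sigma^{(1)}$ at $v$ then yields $x_{P_v}$, and rewriting the associated exchange relation expresses $x_v'$ as a polynomial in the initial cluster variables, the projective cluster variables, and the exchange variables $x_u'$ for predecessors $u$ of $v$; the latter lie in $\mathcal{L}_P(\Sigma)$ by the inductive hypothesis.

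The main obstacle will be carrying out this adaptation to non-source vertices. Once $v$ has predecessors, the exchange relation $x_vx_v'=p_v^+\prod_{u\to v}x_u^{b_{uv}}+p_v^-\prod_{v\to w}x_w^{-b_{wv}}$ acquires a nontrivial incoming term, so the analogue of equation~\eqref{eq1} in the proof of Lemma~\ref{lemm1} picks up additional factors, and the coefficient ratios $p_v^{(k)+}/p_v^{(k)-}$ must be tracked through a longer mutation sequence that also passes over the predecessors of $v$. These are bookkeeping difficulties rather than fundamentally new ideas, which is presumably why Lemma~\ref{lemm1} is stated in the excerpt only in the source case; once the bookkeeping is in place, the containment $\mathcal{L}(\Sigma)\subseteq\mathcal{L}_P(\Sigma)$ follows vertex by vertex and completes the proof.
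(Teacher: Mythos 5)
Your proposal is correct and rests on the same two pillars as the paper's proof: the reduction via \cite[Theorem 1.20]{BFZ} to showing $x_v'\in\mathcal{L}_P(\Sigma)$ for every vertex $v$, and Lemma~\ref{lemm1} together with an extension of its computation to non-source vertices. What differs is the organization of the induction. The paper inducts on the rank: it treats the source $n$ via Lemma~\ref{lemm1}, deletes $n$, invokes the inductive hypothesis on the smaller acyclic seed to dispose of the vertices not adjacent to $n$ (for which neither $x_i'$ nor $x_{P_i}$ is affected by the deletion), and then only has to redo the Lemma~\ref{lemm1} computation for the direct successors of $n$. You instead induct over the vertices in a topological order and carry out the adapted computation at every non-source vertex; this buys you a uniform treatment at the cost of doing the explicit computation more often, while the paper's deletion trick localizes the extra work to the neighbourhood of one source. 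In both versions the non-source step is the crux, and the paper leaves it at exactly the level of detail you do (``the same argument as in the proof of Lemma~\ref{lemm1}''), so your proposal is not less complete than the published argument. One correction to your sketch of that step: the rewriting does not in fact produce the exchange variables $x_u'$ of the predecessors of $v$, so your inductive hypothesis is stronger than needed. Writing $x_wx_{P_w}=q_w^{+}\prod_{u\to w}x_u^{b_{uw}}\prod_{w\to l}x_{P_l}^{-b_{lw}}+q_w^{-}$ for each direct successor $w$ of $v$, the non-constant term is divisible by $x_v$ because $v\to w$; hence multiplying $x_{P_v}$ by $\prod_{v\to w}x_w^{a_w}$ and expanding shows that $\prod_{v\to w}x_w^{a_w}\,x_{P_v}$ equals a $\mathbb{ZP}$-multiple of $x_v'$ plus a polynomial in the $x_i$ and $x_{P_i}$ alone. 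The genuine bookkeeping is the tracking of the mutated coefficient ratios $q_w^{+}/q_w^{-}$ against $p_v^{+}/p_v^{-}$, which is precisely what the computations in the proof of Lemma~\ref{lemm1} carry out.
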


\begin{proof} We use induction on the rank of $\mathcal{A}(\Sigma)$.
Let $\mathcal{A}(\Sigma)$ be a cluster algebra of rank 2. Then $Q$ is a quiver as follows
$$
\includegraphics[width=3cm]{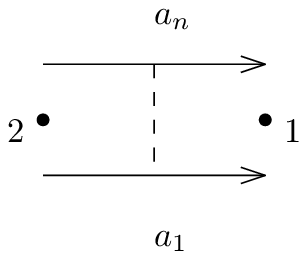}
$$
We have $x_{P_1}=x'_1=\dfrac{p_1^{+}x_2^{n}+p_1^{-}}{x_1}$,
$x'_2=\dfrac{p_2^{+}+p_2^{-}x_1^{n}}{x_2}$ and
$x_{P_2}=\dfrac{p_2'^{+}(\dfrac{p_1^{+}x_2^{n}+p_1^{-}}{x_1})^n+p_2'^{-}}{x_2}$.
Since by Theorem 1.20 of \cite{BFZ}, $\mathcal{A}(\Sigma)$ is generated
by $x_1, x_2, x'_1$ and $x'_2$, it is enough to show that $x'_2$ is contained in
$\mathcal{L}_P(\Sigma)$.

Now $x_1^nx_{P_2}=\dfrac{p_2'^{+}(p_1^{-})^n+p_2'^{-}x_1^{n}}{x_2}+f$,
where $f\in \mathcal{L}_P(\Sigma)$. Since
$p_2'^{+}(p_1^{-})^n+p_2'^{-}x_1^{n}=\dfrac{p_2'^{-}}{p_2^{-}}(p_2^{+}+p_2^{-}x_1^n)$ (using
the mutation formula for coefficients from Section~\ref{sec:background}),
we get  $x'_2=\dfrac{p_2^{-}}{p_2'^{-}}x_1^nx_{P_2}-\dfrac{p_2^{-}}{p_2'^{-}}f\in \mathcal{L}_P(\Sigma)$.
Therefore in this case, $\mathcal{L}_P(\Sigma)=\mathcal{A}(\Sigma)$.

Now assume that $\mathcal{A}(\Sigma)$ is of rank $n\geq 3$. We label the vertices of $Q$ from
$1$ to $n$ such that, if there exists a path from $i$ to $j$, then $i> j$. In particular, $n$ is a source.
Let $n-1, \cdots, n-t$ be direct successors of the vertex $n$, for some $t>0$,
and for $i=1,\dots, t$ let $a_i$ be the number of arrows from $n$ to $n-i$ in $Q$.
The quiver $Q$ looks as follows
\begin{center}
\includegraphics[width=5cm]{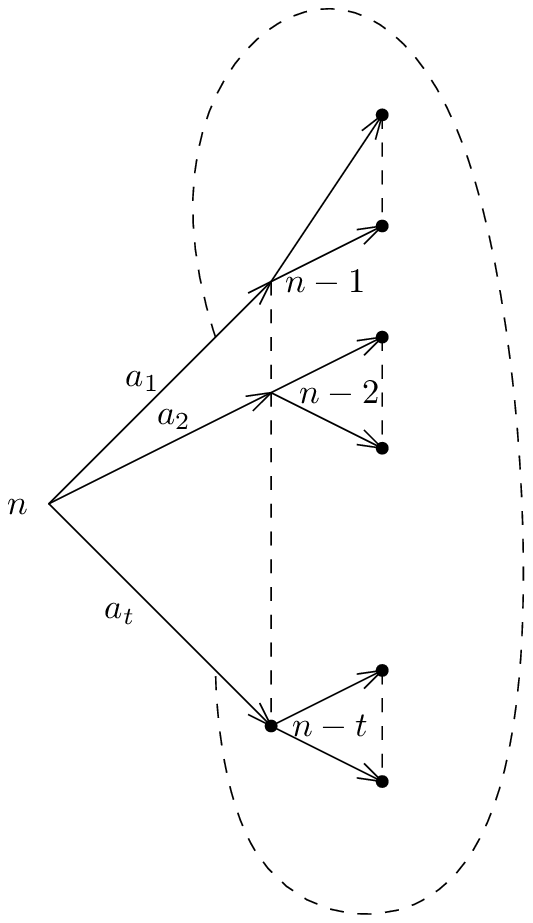}
\end{center}
By Lemma \ref{lemm1},
$x'_n=\dfrac{p_n^{+}+p_n^{-}x_{n-1}^{a_1}x_{n-2}^{a_2}\cdots x_{n-t}^{a_t}}{x_n}\in \mathcal{L}_P(\Sigma)$.
Now we delete the vertex $n$ and all arrows incident with it. Then we have a seed
$\widetilde{\Sigma}=(\tilde{\mathbf{x}}, \tilde{\mathbf{p}}, \tilde{Q})$,
where $\tilde{\mathbf{x}}=\lbrace \tilde{x}_1, \tilde{x}_2, \cdots, \tilde{x}_{n-1}\rbrace$ and
$\tilde{\mathbf{p}}=(\tilde{p}^{\pm}_1, \cdots,\tilde{p}^{\pm}_{n-1})$ with $\tilde{x}_i=x_i$ and
$\tilde{p}^{\pm}_{i}=p^{\pm}_{i}$ for $i=1,\dots, n-1$.
The rank of the associated cluster algebra $\mathcal{A}(\widetilde{\Sigma})$ is $n-1$ and
so by the induction
assumption $\mathcal{A}(\widetilde{\Sigma})=\mathcal{L}_P(\widetilde{\Sigma})$. Since $Q$ is acyclic,
for each $i>n-t$, $\tilde{x}'_i=x'_i$ and $\tilde{x}_{P_i}=x_{P_i}$. Therefore for each $i>n-t$,
$x'_i\in \mathcal{L}_P(\Sigma)$. It is enough to show that $x'_{i}\in \mathcal{L}_P(\Sigma)$ for each
$n-1\leq i\leq n-t$. Put $v=n$ and $v_i=n-i$ for each $1\leq i\leq t$. By the proof of the
Lemma~\ref{lemm1} we have
$x_{P_{v_i}}=\dfrac{p^{(i+1)+}_{v_i}x_v^{a_i}x_{v_{i_1}}^{a_{i_1}}\cdots
x_{v_{i_{s_i}}}^{a_{i_{s_i}}}x_{P_{w_{i_1}}}^{b_{i_1}}\cdots
x_{P_{w_{i_{t_i}}}}^{b_{i_{t_i}}}+p^{(i+1)-}_{v_i}}{x_{v_i}}$.
Also $x'_{v_i}=\dfrac{p^{+}_{v_i}x_v^{a_i}x_{v_{i_1}}^{a_{i_1}}\cdots
x_{v_{i_{s_i}}}^{a_{i_{s_i}}}+p^{-}_{v_i}x_{w_{i_1}}^{b_{i_1}}\cdots x_{w_{i_{t_i}}}^{b_{i_{t_i}}}}{x_{v_i}}$.
By the same argument as in the proof of the Lemma \ref{lemm1} we can compute
$x_{P_{w_{i_1}}}, \cdots, x_{P_{w_{i_{t_i}}}}$. Then the same argument as in the proof of the
Lemma~\ref{lemm1} shows that $x'_{i}\in \mathcal{L}_P(\Sigma)$.
\end{proof}

Combining Theorem 1.20 of \cite{BFZ} and Theorem \ref{thm1}, we obtain the following corollary.

\begin{corollary}\label{cor1} Let $\Sigma=(\mathbf{x}, \mathbf{p}, Q)$ be a seed. Then $\mathcal{L}_P(\Sigma)=\mathcal{A}(\Sigma)=\mathcal{L}(\Sigma)$ if and only if $Q$ is acyclic.
\end{corollary}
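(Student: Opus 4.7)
The plan is to derive this as an immediate consequence of the two results that combine in the statement. The forward implication uses Theorem~\ref{thm1} together with Theorem~1.20 of~\cite{BFZ}; the reverse implication uses only Theorem~1.20 of~\cite{BFZ}.

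First I would handle the ``if'' direction. Assume $Q$ is acyclic, so $\Sigma=(\mathbf{x},\mathbf{p},Q)$ is an acyclic seed (in particular totally mutable). Theorem~1.20 of~\cite{BFZ} then gives $\mathcal{A}(\Sigma)=\mathcal{L}(\Sigma)$. Our Theorem~\ref{thm1}, applied to the acyclic seed $\Sigma$, gives $\mathcal{L}_P(\Sigma)=\mathcal{A}(\Sigma)$. Chaining these two equalities yields $\mathcal{L}_P(\Sigma)=\mathcal{A}(\Sigma)=\mathcal{L}(\Sigma)$.

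For the converse, suppose that $\mathcal{L}_P(\Sigma)=\mathcal{A}(\Sigma)=\mathcal{L}(\Sigma)$. Note that the middle equality $\mathcal{A}(\Sigma)=\mathcal{L}(\Sigma)$ is all that is needed: by Theorem~1.20 of~\cite{BFZ}, the equality of the cluster algebra with its lower bound forces $\Sigma$ to be acyclic, i.e.\ $Q$ is acyclic.

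There is no substantive obstacle here; the only thing to be careful about is that Theorem~\ref{thm1} is stated for acyclic seeds in the coefficient case (so that $\mathcal{L}_P(\Sigma)$ is well defined via Definition~\ref{def:proj-cl-var1}(i)), and this matches exactly the hypothesis of the forward direction. Thus no further mutation-theoretic input is required, and the corollary is a clean combination of the two cited theorems.
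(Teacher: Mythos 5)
Your proof is correct and is exactly the argument the paper intends: the paper introduces the corollary with the single line ``Combining Theorem 1.20 of \cite{BFZ} and Theorem~\ref{thm1}, we obtain the following corollary,'' and your two directions (Theorem~\ref{thm1} plus \cite[Theorem 1.20]{BFZ} for the ``if'' direction; the middle equality $\mathcal{A}(\Sigma)=\mathcal{L}(\Sigma)$ plus \cite[Theorem 1.20]{BFZ} for the ``only if'' direction) are precisely that combination. No difference in approach.
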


The converse to Theorem~\ref{thm1} is not true in general as the following example shows:
It is an example of a
cluster algebra given by a cyclic seed $\Sigma$ where $\mathcal{L}_P(\Sigma)=\mathcal{A}(\Sigma)$.

\begin{example} \label{ex:cyclic}
Let $\Sigma=(\mathbf{x}, \mathbf{p}, Q)$, where $\mathbf{x}=\{x_1, x_2, x_3\}$, $\mathbf{p}=(p^{\pm}_1, p^{\pm}_2, p^{\pm}_3)$ and $Q$ is the quiver $$\hskip.5cm \xymatrix{
&{2} \ar[dr]^{\beta}&\\
{1}\ar[ur]^{\alpha}\ar@{<-}[rr]_{\gamma}&&{3}
}
\hskip.5cm$$
We have $x'_1=x^{(1)}_1=\dfrac{p^{+}_1x_3+p^{-}_1x_2}{x_1}, x'_2=x^{(2)}_2=\dfrac{p^{+}_2x_1+p^{-}_2x_3}{x_2}$
and $x'_3=x^{(3)}_3=\dfrac{p^{+}_3x_2+p^{-}_3x_1}{x_3}$. By part $(ii)$ of Example \ref{ex3} we have
$x_{P_1}=\dfrac{p_{2}^{(1)+}x_1+p_2^{(1)-}(p^{+}_1x_3+p^{-}_1x_2)}{x_1x_2}$, $x_{P_2}=\\\dfrac{p_{2}^{(3)+}(p^{+}_3x_2+p^{-}_3x_1)+p_2^{(3)-}x_3}{x_2x_3}$ and $x_{P_3}=\dfrac{p_{3}^{(1)+}(p^{+}_1x_3+p^{-}_1x_2)+p_3^{(1)-}x_1}{x_3x_1}$.
Then $x_2x_{P_1}=p_{2}^{(1)+}+p_{2}^{(1)-}x'_1$ and so $x'_1\in \mathcal{L}_P(\Sigma)$. Also $x_3x_{P_2}=p_{2}^{(3)+}p^{+}_3+\dfrac{p_{2}^{(3)-}}{p_{2}^{-}}x'_2$ and hence
$x'_2\in \mathcal{L}_P(\Sigma)$.
Lastly, $x_1x_{P_3}=p_{3}^{(1)+}p^{+}_1+\dfrac{p_{3}^{(1)-}}{p_{3}^{-}}x'_3$ and hence
$x'_3\in \mathcal{L}_P(\Sigma)$.
The cluster algebra $\mathcal{A}(\Sigma)$ is the $\mathbb{ZP}$-subalgebra of the field of
rational functions in the three independent variables $x_1,x_2,x_3$
with coefficients in $\mathbb{ZP}$ generated by the cluster
variables $x_1, x_2, x_3, x'_1, x'_2, x'_3, x_{P_1}, x_{P_2}, x_{P_3}$. Since the $x_i'$ all belong to
$\mathcal{L}_P(\Sigma)$, we obtain
$\mathcal{L}_P(\Sigma)=\mathcal{A}(\Sigma)$. Note that by Theorem 1.20 of \cite{BFZ},
$\mathcal{L}(\Sigma)%\subsetneqq
\subsetneq \mathcal{A}(\Sigma)$.
\end{example}

Let $\Sigma=(\mathbf{x}, \mathbf{p}, Q)$ be a seed. Recall that a monomial in
$x_1, x'_1, \cdots, x_n, x'_n$ is standard if it contains no product of the form $x_jx'_j$ \cite{BFZ}.
The standard monomials span $\mathcal{L}(\Sigma)$ as a $\mathbb{ZP}$-module.
Berenstein, Fomin and Zelevinsky proved that the standard monomials in $x_1, x'_1, \cdots, x_n, x'_n$
are linearly independent over $\mathbb{ZP}$ if and only if $\Sigma$ is acyclic \cite{BFZ}.
Let $\Sigma=(\mathbf{x}, \mathbf{p}, Q)$ be a seed.

\begin{definition}
We call a monomial {\em standard in}
$x_1, x_{P_1}, \cdots, x_n, x_{P_n}$ if it contains no product of the form $x_jx_{P_j}$.
\end{definition}

If $\Sigma=(\mathbf{x}, \mathbf{p}, Q)$ is an acyclic seed, then it is easy to see that for any $j\in Q_0$, $x_jx_{P_j}=p_{j}^{+}\prod_{i\rightarrow j}x_i\prod_{j\rightarrow l}x_{P_l}+p_{j}^{-}$. Therefore in this case standard monomials in $x_1, x_{P_1}, \cdots, x_n, x_{P_n}$ span $\mathcal{L}_P(\Sigma)=\mathbb{ZP}[x_1, x_{P_1}, \cdots, x_n, x_{P_n}]$ as a $\mathbb{ZP}$-module.

\begin{theorem} \label{thm:standard}
Let $\Sigma=(\mathbf{x}, \mathbf{p}, Q)$ be an acyclic seed. Then the standard monomials in $x_1, x_{P_1}, \cdots, x_n, x_{P_n}$ form a $\mathbb{ZP}$-basis of $\mathcal{L}_P(\Sigma)$.
\end{theorem}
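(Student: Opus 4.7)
The spanning statement is already proved in the paragraph preceding the theorem via the identity $x_jx_{P_j}=p_j^+\prod_{i\to j}x_i\prod_{j\to l}x_{P_l}+p_j^-$ (which was established in the proof of Theorem~\ref{thm1}). Thus only $\mathbb{ZP}$-linear independence remains, and my plan is to prove this by embedding $\mathcal{L}_P(\Sigma)$ into the Laurent polynomial ring $\mathbb{ZP}[\mathbf{x}^{\pm 1}]$ (Laurent phenomenon) and attaching to each standard monomial a distinguished ``leading'' Laurent monomial that lets one isolate coefficients in any putative dependence.

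Label the vertices of $Q$ topologically, so that $j\to l$ implies $j>l$. By Remark~\ref{rem:projective variables1}, each projective cluster variable admits a reduced expression $x_{P_j}=f_j(\mathbf{x})/\mathbf{x}^{\mathbf{d}_j}$, where $\mathbf{d}_j=(d_1^{(j)},\ldots,d_n^{(j)})$ is the dimension vector of the indecomposable projective $P_j\in\modd kQ$. Under the topological labeling, the composition factors of $P_j$ correspond to vertices reachable from $j$, all of which have index $\leq j$, so $d_i^{(j)}=0$ for $i>j$ and $d_j^{(j)}=1$; hence the $n\times n$ matrix $D$ whose columns are the $\mathbf{d}_j$ is upper-unitriangular. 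Furthermore, $f_j$ has nonzero constant term in $\mathbb{ZP}$: this follows in the coefficient-free case from the Caldero--Chapoton formula (the zero submodule of $P_j$ contributes $1$), and in the coefficient case from the explicit mutation formula for $x_{P_j}$ derived in the proof of Lemma~\ref{lemm1}.

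The key combinatorial step is that the map $\Phi\colon(\mathbf{a},\mathbf{b})\mapsto\mathbf{a}-D\mathbf{b}$ is injective on the set of \emph{standard pairs} $\{(\mathbf{a},\mathbf{b})\in\mathbb{Z}_{\geq 0}^{2n}:a_ib_i=0\text{ for all }i\}$. To see this, suppose $\Phi(\mathbf{a},\mathbf{b})=\Phi(\mathbf{a}',\mathbf{b}')$ and set $\mathbf{c}:=\mathbf{b}'-\mathbf{b}$; if $\mathbf{c}\neq 0$, pick $j$ maximal with $c_j\neq 0$. Upper-unitriangularity gives $(D\mathbf{c})_j=c_j$, so $a_j'-a_j=c_j$. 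If $c_j>0$ then $b_j'>0$, which forces $a_j'=0$ by standardness of $(\mathbf{a}',\mathbf{b}')$; but then $c_j=-a_j\leq 0$, a contradiction. The case $c_j<0$ is symmetric, yielding $\mathbf{c}=0$.

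Given the injectivity, expand each standard monomial as a Laurent polynomial: $m_{\mathbf{a},\mathbf{b}}=\mathbf{x}^{\mathbf{a}}\prod_j x_{P_j}^{b_j}=\mathbf{x}^{\mathbf{a}-D\mathbf{b}}\cdot\prod_j f_j(\mathbf{x})^{b_j}$. Since $\prod_j f_j^{b_j}$ has nonzero constant term $c_{\mathbf{b}}\in\mathbb{ZP}$, the componentwise-minimal Laurent monomial of $m_{\mathbf{a},\mathbf{b}}$ is $c_{\mathbf{b}}\mathbf{x}^{\mathbf{a}-D\mathbf{b}}$, and every other Laurent monomial has strictly larger exponent vector. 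In any putative relation $\sum\lambda_{\mathbf{a},\mathbf{b}}m_{\mathbf{a},\mathbf{b}}=0$, choose $(\mathbf{a}_0,\mathbf{b}_0)$ with $\lambda_{\mathbf{a}_0,\mathbf{b}_0}\neq 0$ that minimizes $\mathbf{a}_0-D\mathbf{b}_0$ in the componentwise partial order. By the minimality together with the injectivity of $\Phi$, only $m_{\mathbf{a}_0,\mathbf{b}_0}$ contributes to the coefficient of $\mathbf{x}^{\mathbf{a}_0-D\mathbf{b}_0}$, giving $\lambda_{\mathbf{a}_0,\mathbf{b}_0}c_{\mathbf{b}_0}=0$ in the integral domain $\mathbb{ZP}$, hence $\lambda_{\mathbf{a}_0,\mathbf{b}_0}=0$, a contradiction. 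The main obstacle I anticipate is verifying the nonvanishing of the constant term of $f_j$ in the general coefficient setting, which requires a direct mutation-by-mutation calculation rather than the clean Caldero--Chapoton argument available in the coefficient-free case.
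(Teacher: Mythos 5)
Your reduction to linear independence and your injectivity lemma for $\Phi(\mathbf{a},\mathbf{b})=\mathbf{a}-D\mathbf{b}$ on standard pairs are fine, but the step you yourself flagged as the anticipated obstacle is where the proof actually fails: the numerator $f_j$ of $x_{P_j}$ does \emph{not} in general have a nonzero constant term, even in the coefficient-free acyclic case. Concretely, take $Q$ of type $A_3$ with arrows $3\to 2\to 1$ and the coefficient-free seed. Then $x_{P_1}=(1+x_2)/x_1$ and, mutating at $1$ and then at $2$, one gets $x_{P_2}=\dfrac{x_1+x_3+x_2x_3}{x_1x_2}$, whose numerator has constant term $0$. (Your appeal to the Caldero--Chapoton formula is the source of the error: the zero submodule contributes the Laurent monomial $\prod_i x_i^{\sum_{i\to j}m_j}$ after clearing denominators, not the constant term; more structurally, the recursion $f_v=p_v^+\bigl(\prod_{i\to v}x_i\bigr)\prod_{v\to l}f_l^{a_{vl}}+p_v^-\prod_{v\to l}\mathbf{x}^{a_{vl}\mathbf{d}_l}$ shows $f_v(0)=0$ whenever $v$ has both an incoming and an outgoing arrow.) Once $f_j$ may lack a constant term, two things break: the set of exponent vectors of $\prod_j f_j^{b_j}$ need not have a unique componentwise-minimal element (e.g.\ $x_1$ and $x_3$ above are incomparable), so your ``distinguished leading Laurent monomial'' is not well defined; and even granting the bookkeeping, the coefficient you extract at $\mathbf{x}^{\mathbf{a}_0-D\mathbf{b}_0}$ is $\lambda_{\mathbf{a}_0,\mathbf{b}_0}c_{\mathbf{b}_0}$ with $c_{\mathbf{b}_0}$ possibly zero, so no contradiction follows.

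The overall strategy (a triangularity/leading-term argument on the Laurent expansion of standard monomials) is the right one and is essentially what the paper does, following the proof of Theorem 1.16 of \cite{BFZ}; the difference is that the paper works with the \emph{lexicographic} total order attached to a topological labelling of the acyclic quiver and with the recursive expansion $x_{P_j}=x_j^{-1}\bigl(p_j^+\prod_{i\to j}x_i\prod_{j\to l}x_{P_l}+p_j^-\bigr)$, for which each factor has a well-defined lex-first monomial and the map from standard monomials to their lex-first Laurent monomials is order-preserving, hence injective. To repair your argument you would need to replace the componentwise partial order by such a term order and re-verify the unitriangularity of the resulting exponent map (the relevant matrix is no longer $D$ but $D$ corrected by the lex-minimal exponents of the $f_j$), which is exactly the content of the paper's lexicographic bookkeeping.
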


\begin{proof} We use the same idea as in \cite[proof of Theorem 1.16]{BFZ}. Since $Q$ is acyclic, we can assume that for any arrow $j\rightarrow i$ in $Q$, $i>j$. Label standard monomials in $x_1, x_{P_1}, \cdots, x_n, x_{P_n}$ by the points $\mathbf{m}=(m_1, \cdots, m_n)\in \mathbb{Z}^n$, where $\mathbf{x}^{<\mathbf{m}>}=x_1^{<m_1>}\cdots x_n^{<m_n>}$ and \begin{align*}
x_j^{<m_j>} = \begin{cases}
x_j^{m_j} & \text{if }\ m_j\geq 0 ,\\
(x_{P_j})^{-m_j} & \text{if }\ m_j< 0.\
\end{cases}
\end{align*}
Let $\prec$ be the lexicographic order in $\mathbb{Z}^n$. For $\mathbf{m}, \mathbf{n}\in \mathbb{Z}^n$, $\mathbf{m}\prec\mathbf{n}$ if the first nonzero difference $m_j-n_j$ is positive. We order both the Laurent monomials $\mathbf{x}^{\mathbf{m}}=x_1^{m_1}\cdots x_n^{m_n}$ and the standard monomials $\mathbf{x}^{<\mathbf{m}>}=x_1^{<m_1>}\cdots x_n^{<m_n>}$ lexicographically. For each $m_j<0$, $x_j^{<m_j>}=(x_{P_j})^{-m_j}=x_j^{m_j}(p_{j}^{+}\prod_{i\rightarrow j}x_i\prod_{j\rightarrow l}x_{P_l}+p_{j}^{-})^{-m_j}$. This implies that if $\mathbf{m}\prec\mathbf{m'}$, then the first monomial in $\mathbf{x}^{<\mathbf{m}>}$ precedes the first monomial in $\mathbf{x}^{<\mathbf{m}'>}$. Therefore the standard monomials in $x_1, x_{P_1}, \cdots, x_n, x_{P_n}$ are linearly independent over $\mathbb{ZP}$.
\end{proof}

It is not clear whether the converse of Theorem~\ref{thm:standard} holds:
Let $\Sigma=(\mathbf{x}, \mathbf{p}, Q)$ be a seed and assume that the standard monomials in
$x_1, x_{P_1}, \cdots, x_n, x_{P_n}$ form a $\mathbb{ZP}$-basis of $\mathcal{L}_P(\Sigma)$.
Then we do not know whether $\Sigma$ is an acyclic seed.

%%%%%%%%%%%%%%%%%%%%%%%%%%%%%%%%%%%%%%
\section{Coefficient-free acyclic cluster algebras}\label{sec 4}

Let $\mathcal{A}(\mathbf{x}, Q)$ be an acyclic coefficient-free cluster algebra such that
$\mathcal{A}(\mathbf{x},Q)=\mathcal L_P(\Sigma)$. In this section we provide a generating set of
cluster variables for the cluster algebra $\mathcal{A}(\mathbf{x}, \mu_j(Q))$, for any $1\le j\le n$.
This is a crucial result in the rest of the paper.

Let $Q$ be an acyclic quiver with $n$ vertices and $H=kQ$ its finite-dimensional hereditary $k$-algebra. Let $\mathcal{C}_H$ be the corresponding cluster category, $T=T_1\oplus\cdots\oplus T_n$ be a
basic\footnote{i.e. for $i\ne j$, the summands $T_i$ and $T_j$ are not isomorphic}
cluster-tiling object of
$\mathcal{C}_H$ and $B=\End_{\mathcal{C}_H}(T)^{op}$. The functor $F:\mathcal{C}_H\rightarrow \modd(B)$ given by $X\rightarrow \mathcal{C}_H(T, X)$ induces an equivalence of
categories $\mathcal{C}_H/(T[1])\rightarrow \modd(B)$, where $[1]$ denotes the shift
functor and $(T[1])$ denotes the ideal of morphisms in $\mathcal{C}_H$ which factor through a direct
sum of copies of $T[1]$ (\cite{BMR1}, \cite{KR}). Let $N\in \modd(B)$, $e\in K_0(\modd(B))$ and $\Gr_e(N)$ the variety of submodules $N'$ of $N$ whose class in $K_0(\modd(B))$
is $e$. Then $\Gr_e(N)$ is a closed projective subvariety of the classical Grassmannian
of subspaces of $N$. Let $\chi(\Gr_e(N))$ denote its Euler-Poincar\'{e} characteristic with
respect to the \'{e}tale cohomology with proper support. Let $K^{sp}_0(\modd(B))$ be the quotient of the free abelian group on
the set of isomorphism classes $[N]$ of finite-dimensional $B$-modules $N$, modulo the
subgroup generated by all elements $[N_1\oplus N_2]-[N_1]-[N_2]$, called the
split Grothendieck group of $\modd(B)$.
Palu in \cite{Pa} defined a bilinear form on $K^{sp}_0(\modd(B))$ as follows
$$
< , >:K^{sp}_0(\modd(B))\times K^{sp}_0(\modd(B))\rightarrow \mathbb{Z}
$$
$$
<N, N'>=[N, N']-^1[N,N'],
$$
where $N, N'\in \modd(B)$, $[N, N']=dim_k\mathcal{C}_H(N, N')$ and
$^1[N, N']=dim_k\mathcal{C}_H(N, N'[1])$.
He used this to define an antisymmetric bilinear form on it:
$$
< , >_a:K^{sp}_0(\modd(B))\times K^{sp}_0(\modd(B))\rightarrow \mathbb{Z}
$$
$$
<N, N'>_a=<N, N'>-<N', N>.
$$
Let $\ind\mathcal{C}_H$ be a set of representatives for the isomorphism classes of indecomposable objects of $\mathcal{C}_H$ and let $S_i$ be
the top of the projective $B$-module $P_i = F(T_i)$ for each $1\leq i\leq n$.
With this, Palu defined a {\em Caldero-Chapoton map}
(see also \cite{CK}),
$X^T_{?}: \ind\mathcal{C}_H \rightarrow \mathbb{Q}(x_1, \cdots, x_n)$ by
\begin{align*}
X^T_M = \begin{cases}
x_i & \text{if }\ M\cong T_i[1] ,\\
\sum_e\chi(Gr_eFM)\Pi_{i=1}^{n}x_i^{<S_i, e>_a-<S_i, F(M)>}& \text{otherwise. }\
\end{cases}
\end{align*}
He showed that the map $X^T_{?}$ is a cluster character \cite[Theorem 1.4]{Pa}.

Let $\mathcal{T}$ be a set of representatives for the isomorphism classes of indecomposable rigid
objects of $\mathcal{C}_H$.
For any basic cluster-tilting object $T$ in $\mathcal{C}_H$, let $Q_T$ be
the quiver of $\End_{\mathcal{C}_H}(T)^{op}$. Then $X^T_{?}$ induces a bijection from the set $\mathcal{T}$ to the set of cluster variables of the associated cluster algebra $\mathcal{A}_{Q_T}:=\mathcal{A}(\mathbf{x}, Q_T)$,
sending basic
cluster-tilting objects to clusters, \cite[Corollary 5.4]{Pa}.
Furthermore, there is a commutative diagram
\begin{equation}\label{eq:diagram-Palu}
%\hskip.5cm
\xymatrix{
&{\mathcal{T}} \ar[dr]^{X^T_{?}} \ar[dl]_{X_{?}}&\\
{\mathcal{A}_{Q}}&&{\mathcal{A}_{Q_T}}\ar@{->}[ll]_{f}
}
%\hskip.5cm
\end{equation}
where $X_{?}$ is the Caldero-Chapoton map \cite{CC} and
$f$ is the isomorphism of the cluster algebras $\mathcal{A}_{Q}:=\mathcal{A}(\mathbf{x}, Q)$
and $\mathcal{A}_{Q_T}$ defined as follows:
There exists $i_1, i_2, \cdots, i_t$ such that $Q_T=\mu_{i_1}\cdots \mu_{i_t}(Q)$. Then $f$ sends $x_j$ to $y_j=\mu_{i_t}\cdots \mu_{i_1}(x_j)$, $j=1,\dots,n$, and send any other cluster variable obtained by a sequence of mutations, say $\mu_{l_1}\cdots \mu_{l_m}(x_j)$, to the cluster variable $\mu_{l_1}\cdots \mu_{l_m}(y_j)$.

\begin{remark}\label{rem:induced-isom}
If $(\{x_1,\dots, x_n\},Q)$ and $(\{y_1,\dots, y_n\},Q)$ are seeds with the same quiver, then
the map sending $x_i$ to $y_i$ and any other cluster variable obtained by a sequence
$\mu_{i_r}\dots\mu_{i_1}$ from $(\mathbf{x},Q)$ to the corresponding cluster variable obtained
from applying the sequence $\mu_{i_r}\dots\mu_{i_1}$ to $(\mathbf{y},Q)$ gives an isomorphism of
cluster algebras $g:\mathcal A(\mathbf{x},Q)\to\mathcal A(\mathbf{y},Q)$.
This isomorphism is induced from the isomorphism
$\mathbb{Q}(x_1,\dots, x_n)\cong \mathbb{Q}(y_1,\dots, y_n)$.
\end{remark}

\begin{theorem}\label{thm2} Let $\mathcal{A}(\mathbf{x}, Q)$ be an acyclic cluster algebra
with cluster $\mathbf{x}=\{x_1, x_2, \cdots, x_n\}$ and seed $\Sigma=(\mathbf{x},Q)$.
Assume that $\mathcal{A}(\mathbf{x},Q)=\mathcal L_P(\Sigma)$.
Let $1\le j\le n$ be arbitrary and set $\mathbf{x}'=\mathbf{x}\cup \{x_j'\}\backslash \{x_j\}$.
Then
$\mathcal{A}(\mathbf{x}, \mu_j(Q))$ is generated by the cluster variables in $\mathbf{x}'$ and
by the variables
$x_{P'_1}, \cdots, x_{P'_{j-1}},g(x_{P_j}), x_{P'_{j+1}}, \cdots, x_{P'_n}$,
where $g:\mathcal{A}(\mathbf{x}, Q)\rightarrow \mathcal{A}(\mathbf{x}', Q)$ is
the isomorphism of cluster algebras from Remark~\ref{rem:induced-isom}.
\end{theorem}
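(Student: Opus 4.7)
The plan is to transport the generating set of $\mathcal{A}(\mathbf{x}, Q) = \mathcal{L}_P(\Sigma)$ across the isomorphism $g$ into the cluster algebra $\mathcal{A}(\mathbf{x}, \mu_j(Q))$, and then identify the images with the claimed projective cluster variables.

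First, I would verify that $\mathcal{A}(\mathbf{x}, \mu_j(Q)) = \mathcal{A}(\mathbf{x}', Q)$. The exchange polynomial at vertex $j$ is the same for $Q$ and for $\mu_j(Q)$, since mutation at $j$ merely swaps the roles of the two monomials $\prod_{b_{ij}>0} x_i^{b_{ij}}$ and $\prod_{b_{ij}<0} x_i^{-b_{ij}}$. Consequently, mutating $(\mathbf{x}, \mu_j(Q))$ at $j$ produces the seed $(\mathbf{x}', Q)$, and these two seeds generate the same cluster algebra, which I will denote by $\mathcal{A}'$.

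Next, the isomorphism $g : \mathcal{A}(\mathbf{x}, Q) \to \mathcal{A}(\mathbf{x}', Q) = \mathcal{A}'$ from Remark~\ref{rem:induced-isom} is induced by the substitution $x_j \mapsto x'_j$, $x_i \mapsto x_i$ for $i \ne j$, and sends each cluster variable of the source to the cluster variable of the target obtained by applying the same mutation sequence to the target initial cluster. Invoking the standing assumption $\mathcal{A}(\mathbf{x}, Q) = \mathcal{L}_P(\Sigma) = \mathbb{Z}[\mathbf{x}, x_{P_1}, \ldots, x_{P_n}]$ and applying $g$ therefore yields
$$
\mathcal{A}' \;=\; g\bigl(\mathbb{Z}[\mathbf{x}, x_{P_1}, \ldots, x_{P_n}]\bigr) \;=\; \mathbb{Z}\bigl[\mathbf{x}', \, g(x_{P_1}), \, \ldots, \, g(x_{P_n})\bigr].
$$

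The final step is to identify $g(x_{P_i}) = x_{P'_i}$ for each $i$, after which substituting $x_{P'_i}$ for $g(x_{P_i})$ whenever $i \ne j$ recovers exactly the generating set claimed in the theorem. This identification is the one place where real work is required, and it encodes the naturality of the Caldero-Chapoton map under a change of initial cluster at a fixed quiver: by Definition~\ref{def:proj-cl-var1}(i) (or Definition~\ref{def:proj-cl-var-acyclic} when $Q$ is not itself acyclic), $x_{P_i}$ is determined by the indecomposable projective module $P_i$ --- or by the corresponding summand $T'_i$ of the tilting object --- through a formula depending only on the quiver $Q$ and the module, with the initial cluster entering purely as a formal substitution. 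Since $g$ performs exactly this substitution $\mathbf{x} \mapsto \mathbf{x}'$ at the common quiver $Q$, it intertwines the two formulae and carries projective cluster variables of $(\mathbf{x}, Q)$ to projective cluster variables of $(\mathbf{x}', Q)$. Verifying this naturality carefully is the main --- and really the only --- subtlety; with it in hand, the theorem follows.
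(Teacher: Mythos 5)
Your first two steps are fine and match the paper's strategy: $\mathcal{A}(\mathbf{x},\mu_j(Q))=\mathcal{A}(\mathbf{x}',Q)$, and applying the isomorphism $g$ to the generating set $\mathbf{x}\cup\{x_{P_1},\dots,x_{P_n}\}$ of $\mathcal{A}(\mathbf{x},Q)=\mathcal L_P(\Sigma)$ shows that $\mathcal{A}(\mathbf{x}',Q)$ is generated by $\mathbf{x}'$ together with $g(x_{P_1}),\dots,g(x_{P_n})$. The gap is in your final identification. What your ``naturality'' argument actually shows is essentially a tautology: since the defining mutation sequence for a projective cluster variable depends only on the quiver, and $g$ commutes with mutation, $g(x_{P_i})$ is the $i$-th projective cluster variable of the seed $(\mathbf{x}',Q)$ --- a seed with the \emph{same} quiver $Q$. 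But the $x_{P'_i}$ in the theorem (and in all the later lemmas that use it) are the projective cluster variables of the seed $\Sigma'=(\mathbf{x},\mu_j(Q))$, whose quiver is $\mu_j(Q)$, not $Q$. These are defined via a different tilting object and a different mutation sequence, so the equality $g(x_{P_i})=x_{P'_i}$ for $i\neq j$ is a genuine claim, not a formal substitution; indeed it \emph{fails} for $i=j$ (which is exactly why the theorem keeps $g(x_{P_j})$ rather than $x_{P'_j}$ in the generating set), so your blanket statement ``identify $g(x_{P_i})=x_{P'_i}$ for each $i$'' cannot be right as stated.

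The missing ingredient is the categorical comparison that the paper carries out. The seeds $(\mathbf{x},Q)$ and $(\mathbf{x},\mu_j(Q))$ correspond, via the Buan--Marsh--Reiten map $\alpha$, to tilting seeds $(T',Q_{T'})$ and $(T'',Q_{T''})$ whose cluster-tilting objects differ only in the $j$-th summand, i.e.\ $T''_i=T'_i$ for $i\neq j$. Palu's commutative diagrams give $X^{T''}=g\circ X^{T'}$ on the set $\mathcal T$ of indecomposable rigid objects, whence for $i\neq j$
$$
g(x_{P_i})=g\bigl(X^{T'}_{T'_i}\bigr)=X^{T''}_{T'_i}=X^{T''}_{T''_i}=x_{P'_i}.
$$
Some version of this argument (or an equivalent combinatorial proof that the $i$-th projective cluster variable of $(\mathbf{x}',Q)$ coincides with that of $(\mathbf{x},\mu_j(Q))$ for $i\neq j$) is needed to close your proof; without it the statement you have proved is weaker than, and does not obviously imply, the theorem.
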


\begin{proof}
Since $\mathcal{A}(\mathbf{x}, Q)$ is an acyclic cluster algebra, there exists an acyclic seed
$\Sigma=(\mathbf{u}, Q')$ such that $\mathcal{A}(\mathbf{x}, Q)= \mathcal{A}(\mathbf{u}, Q')$.
The cluster algebra $\mathcal{A}(\mathbf{x}, \mu_j(Q))$ is equal to the cluster algebra
$\mathcal{A}(\mathbf{x}', Q)$, where $\mathbf{x}'=\mathbf{x}\cup \{x_j'\}\backslash \{x_j\}$.
Let $(i_1, \cdots, i_t)$ be an ordered sequence of minimal length such that
$(\mathbf{x}, Q)=\mu_{i_t}\cdots \mu_{i_1}(\mathbf{u}, Q')$.
Let $(T', Q_{T'})$ be the tilting seed obtained from $(\mathbf{x},Q)$ through this sequence,
i.e. $\alpha((\mathbf{x}, Q), (i_1, \cdots, i_t))=(T', Q_{T'})$.

Then $\alpha(\mu_j(\mathbf{x}, Q), (i_1, \cdots, i_t, j))=(T'', Q_{T''})$,
where $T''$ is a cluster-tilting object of $\mathcal{C}_H$ obtained by exchanging $T'_j$
with $T''_j=T_j^*$,  by [6, Theorem 6.1].
By the same result, the following diagram commutes:
$$
\xymatrix{
{((\mathbf{x}, Q), (i_1, \cdots, i_t))}\ar[d]_{\mu_j} \ar[r]^-{\alpha}&{(T', Q_{T'})}\ar[d]^{\delta_j}\\
{((\mu_j(\mathbf{x}), \mu_j(Q)), (i_1, \cdots, i_t, j))}\ar[r]_-{\alpha}&{(T'', Q_{T''})}\\
}
$$
We have the following commutative diagrams, see (\ref{eq:diagram-Palu}):
$$\xymatrix{
&{\mathcal{T}} \ar[dr]^{X^{T'}_{?}} \ar[dl]_{X_{?}}&\\
{\mathcal{A}_{Q'}}&&{\mathcal{A}_{Q_{T'}}}\ar@{->}[ll]_{f_1}
}\hskip1cm
\xymatrix{
&{\mathcal{T}} \ar[dr]^{X^{T''}_{?}} \ar[dl]_{X_{?}}&\\
{\mathcal{A}_{Q'}}&&{\mathcal{A}_{Q_{T''}}}\ar@{->}[ll]_{f_2}
}$$
where $f_1, f_2$ are isomorphisms of cluster algebras.
Since $Q_{T'}=\mu_{i_t}\cdots \mu_{i_1}(Q')$, $f_1$ sends $x_l$ to
$y_l=\mu_{i_1}\cdots \mu_{i_t}(x_l)$ and for any sequence of mutations
$\mu_{l_1}\cdots \mu_{l_m}$, the cluster variable $\mu_{l_1}\cdots \mu_{l_m}(x_l)$
is sent to $\mu_{l_1}\cdots \mu_{l_m}(y_l)$. In the same way, since $Q_{T''}=\mu_j\mu_{i_t}\cdots \mu_{i_1}(Q')$, the isomorphism $f_2$ sends $x_l$ to $y_l=\mu_{i_1}\cdots \mu_{i_t}\mu_j(x_l)$ it sends $\mu_{l_1}\cdots \mu_{l_m}(x_l)$ to
$\mu_{l_1}\cdots \mu_{l_m}(y_l)$. Combining these diagrams, we get
the following commutative diagram:
$$
\xymatrix{
&{\mathcal{T}} \ar[dr]^{X^{T''}_{?}} \ar[dl]_{X^{T'}_{?}}&\\
{\mathcal{A}_{Q_{T'}}}\ar@{->}[r]_{f_1}&\mathcal{A}_{Q'}\ar@{->}[r]_{f_2^{-1}}&{\mathcal{A}_{Q_{T''}}}
}
%\hskip1cm
$$
The composition $g=f_2^{-1}f_1$ is therefore an isomorphism of cluster algebras such that $f_2^{-1}f_1(x_i)=x_i$ for each $i\neq j$ and with
$g(x_j)=f_2^{-1}f_1(x_j)=\mu_j(x_j)$.
We have $T'=T'_1\oplus \cdots\oplus T'_n$,
$T''=T'_1\oplus \cdots\oplus T'_{j-1}\oplus T''_j\oplus  T'_{j+1}\oplus \cdots  T'_n$ and so
$X^{T'}_{T'_i}=x_{P_i}$ for each $1\leq i\leq n$ and $X^{T''}_{T'_i}=x_{P'_i}$ for each $i\neq j$.
Therefore the cluster algebra $\mathcal{A}(\mathbf{x}, \mu_j(Q))$ is generated by cluster variables
$x_1, \cdots, x_{j-1}, x_j', x_{j+1}, \cdots, x_n, x_{P'_1},%\\
\cdots, x_{P'_{j-1}}, g(x_{P_j}), x_{P'_{j+1}}, \cdots, x_{P'_n}$.
\end{proof}

For every object $X$ $\in \mathcal{C}_H$ there exists a triangle
$$
T_1^X\rightarrow T_0^X\rightarrow X\rightarrow T_1^X[1]
$$
with $T_1^X$ and $T_0^X$ in $\add(T)$ (see \cite{KR1}). The {\em index} $\ind(X)$ of $X$ is
defined as the class of $[F(T_0^X)]-[F(T_1^X)]$ in $K_0(\proj B)$, \cite{Pa}. Similarly, the {\em coindex}
of $X$ is the class $[F(T^0_X)]-[F(T^1_X)]$ in $K_0(\proj B)$, where
$$
X\rightarrow T^0_X[2]\rightarrow T^1_X[2]\rightarrow X[1]
$$
is a triangle in $\mathcal{C}_H$ with $T^0_X, T^1_X\in \add(T)$.
Then, for any
any object $M\in \mathcal{C}_H$, we have

\begin{equation}\label{eq}
X^{T}_{M}=\underline{x}^{-\coind(M)}\sum_e\chi(Gr_e(F(M)))\prod_{i=1}^{n}x_i^{<S_i, e>_a}
\end{equation}
where $\underline{x}^e=\prod_{i=1}^{n}x_i^{[e:P_i]}$ with
$e\in K_0(\proj B)$ and $[e:P_i]$ is the $i$th coefficient of $e$ in the basis
$[P_1], [P_2], \cdots, [P_n]$ (see \cite{Pa}).

\begin{lemma}\label{lemm2}
Let $Q$ be an acyclic quiver, $\mathcal{A}=\mathcal{A}(\mathbf{x}, Q)$ and $Q=Q_T$ for some
cluster tilting object $T=T_1\oplus \cdots\oplus T_n$.
For any $1\le j\le n$ consider $\rad(P_j)$:
Let $M$ be a direct sum of rigid objects of $\mathcal{C}_H$
such that $\rad(P_j)=\mathcal{C}_H(T,M)$. Furthermore, if $P^1\to P^0\to \rad(P_j)\to 0$ is a
minimal projective resolution of $\rad(P_j)$ in $\modd(B)$, let $r_1,\dots, r_n$ be given by
$P^1=\oplus_{i=1}^nP_i^{r_i}$.
Then
$$
x_jx_{P_j}=x_jX^{T}_{T_j}=1+X^{T}_{M}\prod x_i^{-r_i}\prod_{l\rightarrow j} x_l.
$$
\end{lemma}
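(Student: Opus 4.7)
The plan is to compute $x_j X^T_{T_j}$ directly from the Caldero-Chapoton formula (\ref{eq}) and to partition the Grassmannian sum over submodules of $P_j = F(T_j)$. Because $P_j/\rad(P_j) = S_j$ is simple, every submodule of $P_j$ is either $P_j$ itself or contained in $\rad(P_j) = F(M)$. This splits the sum into two parts:
\[
x_j X^T_{T_j} = x_j\, \underline{x}^{-\coind T_j} \left( \prod_i x_i^{\langle S_i, [P_j] \rangle_a} + \sum_e \chi(\Gr_e \rad(P_j))\, \prod_i x_i^{\langle S_i, e \rangle_a} \right).
\]
Comparing the two versions of (\ref{eq}) yields $\underline{x}^{-\coind N} = \prod_i x_i^{-\langle S_i, FN \rangle}$; substituting this into the prefactor, and using the projectivity identity that expresses $\langle S_i, P_j \rangle$ via the $j$-th component, the $[P_j]$-summand collapses to $1$. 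This produces the leading term of the desired formula.

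For the remaining summand, I would recognise the inner sum as $X^T_M \prod_i x_i^{\langle S_i, \rad(P_j) \rangle}$ by another application of (\ref{eq}) to $M$ (with $F(M) = \rad(P_j)$). Combining with the prefactor $x_j \prod_i x_i^{-\langle S_i, P_j \rangle}$ and using the additivity $\langle S_i, P_j \rangle = \langle S_i, \rad(P_j) \rangle + \langle S_i, S_j \rangle$ that comes from the short exact sequence $0 \to \rad(P_j) \to P_j \to S_j \to 0$, this summand simplifies to $x_j X^T_M \prod_i x_i^{-\langle S_i, S_j \rangle}$. It therefore remains to match the monomial factor $x_j \prod_i x_i^{-\langle S_i, S_j \rangle}$ with the target $\prod_i x_i^{-r_i} \prod_{l \to j} x_l$.

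The last identification amounts to the per-index equality $\delta_{ij} - \langle S_i, S_j \rangle = a_{ij} - r_i$, where $a_{ij}$ denotes the number of arrows $i \to j$ in $Q$. To establish it, I would read the exponents off the minimal projective resolution $P^1 = \bigoplus_i P_i^{r_i} \to P^0 = \bigoplus_{l \to j} P_l^{a_{lj}} \to \rad(P_j) \to 0$: the projective cover $P^0$ contributes the factor $\prod_{l \to j} x_l$ through the arrows into $j$, while the first syzygies $P^1$ contribute $\prod_i x_i^{-r_i}$ via Palu's bilinear form applied to the corresponding indices and coindices.

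The main obstacle is pinning down the value of $\langle S_i, S_j \rangle$ (not merely its antisymmetrisation $\langle S_i, S_j \rangle_a = b_{ij}$, which is standard) in terms of the quiver $Q$ and the projective resolution of $\rad(P_j)$. This requires a careful choice of lifts $\widetilde{S_i} \in \mathcal{C}_H$ and tracking both parts of Palu's bilinear form through the identifications in the cluster category. In the acyclic setting, where $B \cong kQ$ is hereditary and hence $r_i = 0$, the identity reduces to a statement purely about arrows into $j$, which should follow directly from the presentation of $\End_{\mathcal{C}_H}(T)^{op}$ by $Q$.
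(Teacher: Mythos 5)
Your opening moves coincide with the paper's: split the Grassmannian sum for $X^T_{T_j}$ into the single submodule $P_j$ and the submodules contained in $\rad(P_j)=F(M)$, use $\coind N=\sum_i\langle S_i,FN\rangle[P_i]$ together with $\langle P_j,S_i\rangle=\delta_{ij}$ to collapse the first term to $1$, and recognise the second as a monomial multiple of $X^{T}_{M}$. The genuine gap is the step ``$\langle S_i,P_j\rangle=\langle S_i,\rad(P_j)\rangle+\langle S_i,S_j\rangle$ from the short exact sequence''. Palu's form $\langle N,N'\rangle=\dim\Hom_B(N,N')-\dim\Ext^1_B(N,N')$ is a \emph{truncated} Euler form on $\modd(B)$ for the cluster-tilted algebra $B=\End_{\mathcal{C}_H}(T)^{op}$, and in the situations where the lemma is actually applied (quivers $Q_T$ containing oriented $3$-cycles, as in Sections 5 and 6) $B$ is not hereditary, so this form is not additive on short exact sequences. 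Its defect on $0\to\rad(P_j)\to P_j\to S_j\to 0$ is exactly $r_i$ in the $i$-th slot, i.e.\ $\langle S_i,P_j\rangle=\langle S_i,\rad(P_j)\rangle+\langle S_i,S_j\rangle+r_i$; this defect is the \emph{entire} source of the factor $\prod_i x_i^{-r_i}$, so assuming additivity erases precisely the term you are trying to produce. Consistently with this, your proposed per-index identity $\delta_{ij}-\langle S_i,S_j\rangle=a_{ij}-r_i$ cannot hold unless $r_i=0$: one has $\langle S_i,S_j\rangle=\delta_{ij}-a_{ij}$ on the nose (it is $\dim\Hom$ minus the number of arrows, computed entirely in $\modd(B)$, with no choice of lifts $\widetilde{S_i}$ in $\mathcal{C}_H$ required), so the $r_i$ do not live inside $\langle S_i,S_j\rangle$ at all. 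Your ``main obstacle'' is therefore misdiagnosed.

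The paper supplies the missing ingredient by lifting the exact sequence to triangles in $\mathcal{C}_H$ (Palu, Lemma 3.1): there are triangles $T_1^X\to T_0^X\to X\to T_1^X[1]$ and $X\to Y\oplus T_1^X[1]\to Z\to X[1]$ with $F(X)\cong\rad(P_j)$, $F(Y)\cong P_j$, $F(Z)\cong S_j$, the point being that the middle term carries the extra summand $T_1^X[1]$ with $F(T_1^X)=P^1$. Additivity of $\coind$ on triangles (Palu, Prop.\ 2.2) gives $\coind(T_j)+\coind(T_1^X[1])=\coind(X)+\coind(Z)$, and $\coind(T_1^X[1])=-\ind(T_1^X)=-\sum_i r_i[P_i]$ (Palu, Lemma 2.1) is exactly where the correction enters; combined with $\coind(Z)=[P_j]-\sum_{l\to j}[P_l]$ this yields the exponent $-\sum_i r_i[P_i]-[P_j]+\sum_{l\to j}[P_l]$ and hence the stated formula. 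Your closing observation that everything works when $B\cong kQ$ is hereditary and $r_i=0$ is correct, but the lemma is invoked in the paper precisely for cluster-tilted algebras with relations, so the argument as written does not establish the statement in the generality in which it is used.
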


\begin{proof}
Let $\xymatrix{0\ar@{->}[r]&\rad(P_j)\ar@{->}[r]^{i}& P_j\ar@{->}[r]^{p}& S_j\ar@{->}[r]&0}$ be an
exact sequence in $\modd(B)$. By \cite[Lemma 3.1]{Pa} there exist triangles
$T_1^X\rightarrow T_0^X\rightarrow X\rightarrow T_1^X[1]$ and
$X\rightarrow Y\oplus T_1^X[1]\rightarrow Z\rightarrow X[1]$ in $\mathcal{C}_H$ that
$F(X)=\mathcal{C}_H(T, X)\cong \rad(P_j)$, $F(Y)\cong P_j$ and $F(Z)\cong S_j$.
By \cite[Proposition 2.2]{Pa},
$\coind(Y\oplus T_1^X[1])=\coind(Y)+\coind(T_1^X[1])=\coind(X)+\coind(Z)$.
We have a minimal projective resolution $P^1\rightarrow P^0\rightarrow F(X)\rightarrow 0$, where
$F(X)\cong \rad(P_j)$. By \cite[Lemma 2.1]{Pa}, $\coind(T_1^X[1])=-\ind(T_1^X)=-\sum n_i[P_i]$,
where $T_1^X=\oplus_{j=1}^{n}T_j^{n_j}$.

Now, $X^{T}_{T_j}
=\underline{x}^{-\coind(T_j)}\sum_e\chi(Gr_e(F(T_j)))\prod_{i=1}^{n}x_i^{<S_i, e>_a}
=\underline{x}^{-\coind(T_j)}\prod_{i=1}^{n}x_i^{<S_i, P_j>_a}+\\\underline{x}^{-\coind(T_j)}\sum_{e\neq \underline{dim}(P_j)}\chi(Gr_e(F(T_j)))\prod_{i=1}^{n}x_i^{<S_i, e>_a}=\underline{x}^{-\coind(T_j)}\prod_{i=1}^{n}x_i^{<S_i, P_j>_a}+\\\underline{x}^{-\coind(T_j)+\coind(X)}X^{T}_{X}$.

Furthermore, $\coind(Z)=\sum_{i=1}^{n}<S_i, F(Z)>[P_i]=\sum_{i=1}^{n}<S_i, S_j>[P_i]=[P_j]-\sum_{l\rightarrow j}[P_l]$ and so
$-\coind(T_j)+\coind(X)=-\sum_{i=1}^{n}r_i[P_i]-[P_j]+\sum_{l\rightarrow j}[P_l]$.
Also, we have $\underline{x}^{-\coind(T_j)}\prod_{i=1}^{n}x_i^{<S_i, P_j>_a}$
$=\prod_{i=1}^{n}x_i^{-<P_j, S_i>}=x_j^{-1}$. Therefore
$x_{P_j}=X^{T}_{T_j}=x_j^{-1}+x_j^{-1}X^{T}_{X}\prod x_i^{-r_i}\prod_{l\rightarrow j} x_l$
and the result follows.
\end{proof}

\begin{remark}\label{rem:projective variables}
Let $\mathcal{A}(\mathbf{x}, Q)$ be an acyclic cluster algebra. Then there exists an acyclic seed
$\Sigma=(\mathbf{u}, Q')$ such that $\mathcal{A}(\mathbf{x}, Q)= \mathcal{A}(\mathbf{u}, Q')$.
Let $(T', Q_{T'})$ be the tilting seed such that  $Q_{T'}=Q$. Recall that we denote by
$\mathcal T$ a set of representatives of the isomorphism classes of the indecomposable objects of
$\mathcal C$. As before, see (\ref{eq:diagram-Palu}),
we have a commutative diagram

$$
\xymatrix{
&{\mathcal{T}} \ar[dr]^{X^{T'}_{?}} \ar[dl]_{X_{?}}&\\
{\mathcal{A}_{Q'}}&&\mathcal A_Q \ar@{->}[ll]_{f}
}
$$
Let $T'_i$ be an indecomposable direct summand of $T'$.
Then the projective cluster variable $x_{P_i}$ with respect to the seed $(\mathbf{x},Q)$
is equal to $X^{T'}_{T'_i}=f^{-1}X_{T'_i}$.
Using this fact and Remark~\ref{rem:projective variables1} we can calculate the projective
cluster variables with respect to $(\mathbf{x},Q)$.
\end{remark}

\begin{example} \label{ex:projective-cl-var}
Let $\Sigma=(\mathbf{x}, Q)$ be the seed with $\mathbf{x}=\{x_1, x_2, x_3, x_4\}$ and $Q$
the following quiver.
$$
%\hskip.5cm
\xymatrix{
&{1} \ar[dl]&\\
{2}\ar@/^-.3pc/[rr]\ar@/^.3pc/[rr]&&{3}\ar[ul]\ar[dl]\\
&{4}\ar[ul]&
}
%\hskip.5cm
$$
Then
$Q''=\mu_4\mu_1(Q)$ is the following acyclic quiver
$$\hskip.5cm \xymatrix{
&{1} \ar[dr]&\\
{2}\ar[ur]\ar[dr]&&{3}\\
&{4}\ar[ur]&
}
%\hskip.5cm
$$
Let $\mathcal C=\mathcal C_H$ be the cluster category of the hereditary algebra $H=kQ''$.
For $i=1,\dots, 4$ let $P_i$ be the indecomposable projective $H$-module corresponding to the
vertex $i$ of $Q''$.
Then
$T'':=P_1\oplus P_2\oplus P_3\oplus P_4$ is a cluster tilting object in $\mathcal{C}$ corresponding to the quiver $Q''$.
The module $P_1\oplus P_2\oplus P_3$ is a sincere (i.e. each simple module occurs as a
composition factor) almost complete basic tilting $H$-module and has two complements by~\cite{HU}.
One of them is $P_4$. Using approximations (see section 6 of \cite{BMRRT}) we find
that the other one is the regular indecomposable $H$-module $M$ with dimension vector
$(1, 1, 1, 0)$.
Then $T':=P_1\oplus P_2\oplus P_3\oplus M$
is the cluster tilting object in $\mathcal{C}$ corresponding to the quiver $Q'=\mu_1(Q)$.
Let $N$ be the regular indecomposable $H$-module with dimension vector $(0,1,1,1)$.
By a similar argument we can see that
$T= N\oplus P_2\oplus P_3\oplus M$ is the cluster tilting object in $\mathcal{C}$ corresponding to the quiver $Q$.
So by Remark \ref{rem:projective variables}, $x_{P_i}=X^{T}_{T_i}=f^{-1}X_{T_i}$ for all $i$.
We calculate these and obtain
$x_{P_1}=\dfrac{x_1x_4+(x_2+x_3)^2}{x_1x_2x_3}$,
$x_{P_2}=\dfrac{x_1x_3^2x_4+(x_1x_4+x_2(x_2+x_3))^2}{x_1x_2x_3^2x_4}$,
$x_{P_3}=\dfrac{x_1x_4+(x_2+x_3)^2}{x_1x_3x_4}$ and
$x_{P_4}=\dfrac{x_1x_4+(x_2+x_3)^2}{x_2x_3x_4}$.
Note that as an alternative way we can also calculate the index and the coindex of each $T_i$, and compute
$x_{P_i}=X^{T}_{T_i}$ using formula \eqref{eq}.

\end{example}

\section{Cluster algebras of type $A_n$}\label{sec 5}
In this section we show that in type $A$, {\em without coefficients},
all cluster algebras are equal to their lower bound cluster algebras
generated by projective cluster variables. In other words,
if $\Sigma=(\mathbf{x}, Q)$ is a seed where the quiver $Q$ is mutation equivalent to a quiver $Q'$
of type $A_n$, the lower bound cluster algebra generated by projective cluster variables
$\mathcal{L}_P(\Sigma)$ and the cluster algebra $\mathcal{A}(\Sigma)$ coincide. We show the above claim
using the characterization of quivers which are mutation-equivalent to a type $A_n$
quiver and by considering the position of the three-cycles which can arise in this case.

Quivers of mutation type $A_n$ can be characterized as follows:

Let $\mathcal{Q}_n$ be the class of quivers with $n$ vertices which satisfy the following:
\begin{itemize}
\item[i)] All non-trivial cycles are oriented and of length $3$,
\item[ii)] Any vertex has at most four neighbors,
\item[iii)] If a vertex has four neighbors, then two of its adjacent arrows belong to one $3$-cycle, and
the other two belong to another $3$-cycle,
\item[iv)] If a vertex has exactly three neighbors, then two of its adjacent arrows belong to a $3$-cycle,
and the third arrow does not belong to any $3$-cycle.
\end{itemize}
Note that by a cycle we mean a cycle in the underlying graph, not passing
through the same edge twice.

\begin{lemma}\label{lemm4}(\cite[Proposition 2.4]{BV}) A quiver $Q$ is mutation equivalent to
a quiver of type $A_n$ if and only if $Q\in  \mathcal{Q}_n$.
\end{lemma}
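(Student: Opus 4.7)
The plan is to verify both implications by analysing the local effect of a mutation on the class $\mathcal{Q}_n$ and then using a block-decomposition argument for the reverse direction.

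For the \emph{forward} direction, first observe that the linearly oriented quiver of type $A_n$ clearly lies in $\mathcal{Q}_n$: it has no non-trivial cycles and every vertex has at most two neighbours, so conditions (i)--(iv) hold vacuously. Hence it suffices to prove that $\mathcal{Q}_n$ is closed under mutation at any vertex $k$. The local picture near $k$ in a quiver $Q\in \mathcal{Q}_n$ is constrained by (ii)--(iv): up to symmetry, $k$ has between one and four neighbours, and each configuration comes in a small number of sub-cases depending on which adjacent arrows belong to $3$-cycles. For each such local configuration I would explicitly draw $Q$ and $\mu_k(Q)$, using the three-step recipe from the background section (add composite arrows, reverse arrows at $k$, cancel $2$-cycles), and then check that conditions (i)--(iv) persist in $\mu_k(Q)$. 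The most delicate sub-case is when $k$ has four neighbours sitting on two $3$-cycles glued at $k$: here mutation produces two new $3$-cycles on the ``opposite side'' and one has to verify that no longer cycle or non-oriented cycle appears and that every neighbour of $k$ still satisfies the neighbour-count conditions.

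For the \emph{reverse} direction, I would use induction on $n$, with base cases $n\le 3$ handled by inspection. For the inductive step, given $Q\in \mathcal{Q}_n$, the key claim is that $Q$ admits a vertex $v$ that is either (a)~a leaf (one neighbour only), or (b)~a ``tip'' of a $3$-cycle whose two other vertices are each connected to the rest of $Q$, or equivalently the middle vertex of a $3$-cycle that is attached to $Q\setminus\{v\}$ through a single connecting vertex. This claim follows from (i)--(iv) by viewing $Q$ as being obtained by gluing ``blocks'' (single arrows and oriented $3$-cycles) along vertices: such a block-decomposition exists precisely because of conditions (iii) and (iv), and any finite such block-tree has a leaf block. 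In case (a) we simply delete $v$ to obtain $Q'\in\mathcal{Q}_{n-1}$ and apply induction, noting that attaching a leaf arrow to a type $A_{n-1}$ quiver yields a type $A_n$ quiver. In case (b) I would mutate at the ``tip'' $v$; by the forward direction this gives a new quiver in $\mathcal{Q}_n$, but now the former $3$-cycle through $v$ has been turned into a path of length two, so $v$ becomes a leaf and we are reduced to case (a).

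The main obstacle is organising the case analysis cleanly: both the forward closure check and the block-decomposition lemma involve several sub-cases, and one must keep track of which arrows belong to which $3$-cycles before and after mutation. A convenient bookkeeping device is to associate to $Q\in\mathcal{Q}_n$ the graph whose vertices are the blocks and whose edges record block-intersections; showing this graph is a tree (using condition (i) that all cycles are $3$-cycles) is the crux of the argument and immediately yields the leaf block needed in the induction. Once the combinatorics is set up this way, the proof becomes a routine verification on a small, finite list of local configurations.
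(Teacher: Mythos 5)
First, note that the paper does not prove this statement at all: Lemma~\ref{lemm4} is quoted verbatim from Buan--Vatne \cite[Proposition 2.4]{BV}, so there is no in-paper argument to compare yours against. Your overall strategy (show $\mathcal{Q}_n$ is mutation-closed, then mutate any $Q\in\mathcal{Q}_n$ down to a linear quiver) is the standard one and is essentially what Buan--Vatne do, and your forward direction is fine in outline: every orientation of the $A_n$ path lies in $\mathcal{Q}_n$, and the local case check for closure under mutation goes through (in your ``delicate'' four-valent case, do not forget the cross-paths $a\to k\to d$ and $c\to k\to b$ between the two triangles; these are what recreate the two oriented $3$-cycles after mutation).

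Your reverse direction, however, has two genuine gaps. (1) Mutating at a degree-two ``tip'' $v$ of an oriented $3$-cycle $v\to x\to y\to v$ (say) does \emph{not} turn $v$ into a leaf: the triangle opens into a length-two path with $v$ in the middle, so $v$ still has two neighbours. What you gain is that the number of $3$-cycles drops by one, so the correct induction is on the number of $3$-cycles, not a reduction to the leaf case; once all triangles are opened, conditions (ii)--(iv) force every vertex to have at most two neighbours (a trivalent vertex with no $3$-cycle violates (iv)), so the underlying graph is a path and one concludes by source/sink mutations. (2) The leaf-deletion step is unsound as stated: ``attaching a leaf arrow to a type $A_{n-1}$ quiver yields a type $A_n$ quiver'' is false (attach a leaf to an interior vertex of a path and you get type $D$), and more fundamentally, knowing that $Q\setminus\{v\}$ is mutation equivalent to $A_{n-1}$ does not transfer to $Q$, because mutating at a neighbour of the attachment vertex creates new arrows incident to $v$, so the leaf does not ``come along for the ride.'' Your block-tree observation is sound and does give the dichotomy (either a leaf vertex or a triangle with a degree-two vertex exists), but the induction must be reorganised around eliminating triangles rather than deleting vertices.
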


Note that in particular, the cluster algebras given by the quivers $Q\in \mathcal{Q}_n$ are
acyclic as the 3-cycles can be removed by mutations.
We recall the head and tail functions: if $\alpha:i\to j$ is an arrow in a quiver,
the {\em head or target $t(\alpha)$ of $\alpha$} is the vertex $j$ and the
{\em tail or source $s(\alpha)$ of $\alpha$}
is the vertex $i$.

In Lemmas~\ref{lemm3} and \ref{lemm5}, we study how
mutations on type $A$ subquivers attached to a triangle
in the quivers $Q\in \mathcal{Q}_n$ keep the property "being generated by projectives".
For this, we fix the notation as in Figure~\ref{fig:quiverQ}.
We assume that such a quiver has an oriented triangle on three vertices $i,j,k$
with two "legs" attached to the vertices $i$ and $k$, both containing at least edge
(i.e. $r\ge 1$ and $s\ge 1$). At the third vertex, the subquiver is not
specified.

\begin{figure}
\includegraphics[width=10cm]{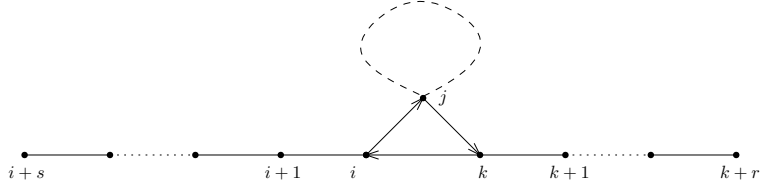}
\caption{Type A quiver with two legs at an oriented triangle}\label{fig:quiverQ}
\end{figure}

\begin{lemma}\label{lemm3}
Let $Q\in \mathcal{Q}_n$ be a quiver as in Figure~\ref{fig:quiverQ}. Assume that
$r\ge 2$ or that $s\ge 2$
and that there exists a sink or source vertex $l\in \{k+2, \cdots, k+r, i+2, \cdots, i+s\}$ in $Q$. Let
$\Sigma=(\mathbf{x}, Q)$ and let $\Sigma'=(\mathbf{x}, \mu_l(Q))$. If
$\mathcal{L}_P(\Sigma)=\mathcal{A}(\Sigma)$, then $
\mathcal{L}_P(\Sigma')=\mathcal{A}(\Sigma')$.
\end{lemma}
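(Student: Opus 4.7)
The plan is to apply Theorem~\ref{thm2} to the seed $\Sigma$ with mutation direction $l$, obtaining an explicit generating set for $\mathcal{A}(\Sigma')$, and then to show that every generator lies in $\mathcal{L}_P(\Sigma')$. The inclusion $\mathcal{L}_P(\Sigma')\subseteq\mathcal{A}(\Sigma')$ is automatic from the definitions.

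First, by Lemma~\ref{lemm4}, $\mu_l(Q)\in\mathcal{Q}_n$, so $\mathcal{A}(\mathbf{x},\mu_l(Q))$ is acyclic as a cluster algebra. Since $(\mathbf{x}',Q)$ and $(\mathbf{x},\mu_l(Q))$ are mutation equivalent (via the mutation at $l$), they generate the same cluster algebra $\mathcal{A}(\mathbf{x}',Q)=\mathcal{A}(\Sigma')$. By the hypothesis $\mathcal{L}_P(\Sigma)=\mathcal{A}(\Sigma)$ and Theorem~\ref{thm2}, this cluster algebra is generated by the cluster $\mathbf{x}'=\mathbf{x}\cup\{x'_l\}\setminus\{x_l\}$ together with the projective cluster variables $x_{P'_1},\ldots,x_{P'_{l-1}},g(x_{P_l}),x_{P'_{l+1}},\ldots,x_{P'_n}$ attached to the seed $(\mathbf{x}',Q)$. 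Since $x_1,\dots,x_n$ already lie in $\mathcal{L}_P(\Sigma')=\mathbb{Z}[x_1,\dots,x_n,x_{P''_1},\dots,x_{P''_n}]$, where the $x_{P''_m}$ denote the projective cluster variables with respect to $\Sigma'$, it remains to express $x'_l$, each $x_{P'_m}$ (with $m\ne l$), and $g(x_{P_l})$ as polynomials in the generators of $\mathcal{L}_P(\Sigma')$.

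The key observation is that $l$ lies on a type-$A$ leg of $Q$, at distance at least two from the triangle, and is a sink or source of $Q$. Hence $\mu_l$ only reverses the arrows incident with $l$ and creates no new ones, so $Q$ and $\mu_l(Q)$ agree outside a small neighborhood of $l$. Correspondingly, in the appropriate cluster category, the cluster-tilting objects $T'$ (associated with $(\mathbf{x}',Q)$) and $T''$ (associated with $\Sigma'$) share all direct summands except at the position $l$, where $T''_l=(T'_l)^*$. Thus for every index $m$ not adjacent to $l$ on the leg, the identification $x_{P'_m}=x_{P''_m}$ holds, and these generators lie in $\mathcal{L}_P(\Sigma')$ for free. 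For $m\in\{l-1,l+1\}$ adjacent to $l$, and for $g(x_{P_l})$, I would use Lemma~\ref{lemm2} together with the Caldero-Chapoton formula~\eqref{eq} to obtain explicit expressions and then rewrite them as polynomials in $x_1,\dots,x_n,x_{P''_1},\dots,x_{P''_n}$. Finally, $x'_l$ is obtained by an exchange relation at $l$ in $\Sigma'$, where $l$ remains a sink or source, and the relation combined with the polynomial expression for $x_{P''_l}$ produces $x'_l$ as an element of $\mathcal{L}_P(\Sigma')$.

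The main obstacle is the bookkeeping for the "boundary" generators $x_{P'_{l-1}}$, $g(x_{P_l})$ and $x_{P'_{l+1}}$ under the local change $T'_l\mapsto (T'_l)^*$: one has to track how the relevant exchange triangles in the cluster category transform, and then verify that the resulting expressions, although built from rational functions with potentially complicated denominators, collapse to polynomials in $x_1,\dots,x_n,x_{P''_1},\dots,x_{P''_n}$. This cancellation is the technical crux of the argument and depends on the precise local type-$A$ structure of the quiver around $l$; the hypothesis that $l\in\{k+2,\dots,k+r,i+2,\dots,i+s\}$ ensures the neighborhood of $l$ avoids the triangle, keeping the relevant computations linear and manageable.
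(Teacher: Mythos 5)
Your overall strategy is exactly the paper's: apply Theorem~\ref{thm2} at the vertex $l$ and reduce to showing that the remaining generators lie in $\mathcal{L}_P(\Sigma')$. One simplification you are missing: in Theorem~\ref{thm2} the variables $x_{P'_m}$ for $m\neq l$ are \emph{by construction} the projective cluster variables of the new seed $\Sigma'=(\mathbf{x},\mu_l(Q))$ (the cluster-tilting objects $T'$ and $T''$ share all summands except at position $l$, and $x_{P'_m}=X^{T''}_{T''_m}$ for $m\ne l$), so there is nothing to check for them --- not even for $m=l\pm1$; your proposed extra treatment of the ``adjacent'' indices is unnecessary. The reduction therefore leaves exactly two elements to handle: $x_l'$ and $g(x_{P_l})$.

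The genuine gap is that you never actually handle them. The entire content of the lemma, beyond the (essentially formal) reduction via Theorem~\ref{thm2}, is the verification that $x_l'$ and $g(x_{P_l})$ are polynomials in $x_1,\dots,x_n,x_{P'_1},\dots,x_{P'_n}$, and you explicitly defer this as ``the technical crux'' without supplying an argument. This step is not routine: it requires a case analysis on whether $l$ is a sink or a source, on whether $l$ has one or two incident arrows, and on the local structure at the neighbours $l\pm1$, together with nontrivial algebraic manipulations. For instance, when $l$ is a source with two outgoing arrows, $g(x_{P_l})=\bigl(1+x_{P'_{l+1}}x_{P'_{l-1}}\bigr)\big/\tfrac{1+x_{l-1}x_{l+1}}{x_l}$ has a non-monomial denominator, and one must first derive the identity $g(x_{P_l})=x_lx_{P'_{l+1}}x_{P'_{l-1}}-x_{l-1}x_{l+1}g(x_{P_l})+x_l$ and then show $x_{l-1}x_{l+1}g(x_{P_l})\in\mathcal{L}_P(\Sigma')$ through six further subcases. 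Asserting that the expressions ``collapse to polynomials'' is precisely the statement to be proved, so as written the proposal is a plan for a proof rather than a proof.
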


\begin{proof}
Let $l\in \{k+2, \cdots, k+r, i+2, \cdots, i+s\}$ be a sink or a source.
Without loss of generality we can assume that $l\in \{i+2, \cdots, i+s\}$ as mutation at $l$
does not affect the rest of the quiver.
We prove the claim by
looking at the two cases where $l$ is a sink or a source, by considering the position of $l$ on the
leg starting at $i$ and the orientations of the arrows incident with $l$ in $Q$ and in the quiver
$Q':=\mu_l(Q)$. Note that by the definition of $Q$, the vertex $l$ has either one or two arrows
incident with it.

Let $g:\mathcal{A}(\mathbf{x}, Q)\rightarrow \mathcal{A}(\mathbf{x'}, Q)$ be the isomorphism from
Theorem~\ref{thm2}, with $\mathbf{x'}=\mathbf{x}\cup \{\mu_l(x_l)\}\backslash \{x_l\}$. Then by Theorem \ref{thm2}, the cluster algebra $\mathcal{A}(\mathbf{x'}, Q)$
$=\mathcal{A}(\mathbf{x}, \mu_l(Q))$ is generated by the cluster variables
$x_1, \cdots, x_{l-1}, x_l', x_{l+1}, \cdots, x_n$,
$x_{P'_1}, \cdots, x_{P'_{l-1}}, g(x_{P_l}), x_{P'_{l+1}}, \cdots,\\ x_{P'_n}$
where $x_{P_i'}:=x_{T_i'}$ is a projective cluster variable of the seed
$\Sigma'=(\mathbf{x},\mu_l(Q))$.
Therefore it is enough to show that the cluster variables $x_l'$ and $g(x_{P_l})$ are generated
by $x_1, x_2, \cdots, x_n,$ $x_{P'_1}, x_{P'_2}, \cdots, x_{P'_n}$. \\
\underline{(I) Let $l$ be a sink.}
In this case, $g(x_{P_l})=x_l$.
We have to show that
$x_l'$ is generated by the cluster variables
$x_1, x_2, \cdots, x_n, x_{P'_1}, x_{P'_2}, \cdots, x_{P'_n}$. \\
(A) First assume that there exists only one arrow $\alpha \in Q_1$ that $t(\alpha)=l$.
Then the start of $\alpha$ is $l-1$. (In this case, $l$ is the vertex $i+s$).
In this case $x_l'=\dfrac{1+x_{l-1}}{x_l}$ and by Lemma \ref{lemm2},
$x_{P'_l}=\dfrac{1+x_{\rad(P'_{l})}}{x_l}$. The radical of $P'_l$ is $P'_{l-1}$
and so
$x_{P'_l}=\dfrac{1+\dfrac{1+x_{\rad(P'_{l-1})}\prod x_j^{-r_j}\prod_{i\rightarrow l-1} x_i}{x_{l-1}}}{x_l}$,
where $P^1\rightarrow P^0\rightarrow \rad(P'_{l-1})\rightarrow 0$ is a minimal projective resolution of
$\rad(P'_{l-1})$ and $P^1=\oplus_{j=1}^{n}P_j^{'r_j}$. Let $Q'=\mu_l(Q)$. \\
(A.1) If $l-1$ is a sink in $Q'$, then
$x_{l-1}x_{P'_l}=\dfrac{1+x_{l-1}}{x_l}+x_{l-2}$ and the result follows. \\
(A.2) If $l-1$ is not a sink in $Q'$,
then $x_{l-1}x_{P'_l}=\dfrac{1+x_{l-1}}{x_l}+x_{\rad(P'_{l-1})}$. In this case $\rad(P'_{l-1})=P'_{l-2}$
and the result follows. \\
(B) Next assume that there exists two arrows $\alpha, \beta \in Q_1$ with tail
$t(\alpha)=l=t(\beta)$ and heads $s(\alpha)=l-1$ and $s(\beta)=l+1$.
In this case $x_l'=\dfrac{1+x_{l-1}x_{l+1}}{x_l}$, $x_{P'_{l}}=\dfrac{1+x_{\rad(P'_{l})}}{x_l}$ and
$\rad(P'_{l})=P'_{l+1}\oplus P'_{l-1}$. Since $x_{\rad(P'_{l})}=X^{T'}_{M}$, where
$F(M)=rad(P'_{l})$, $x_{\rad(P'_{l})}=x_{P'_{l+1}}x_{P'_{l-1}}$. Then
$x_{P'_{l}}=\dfrac{1+x_{P'_{l+1}}x_{P'_{l-1}}}{x_l}$. \\
(B.1) If $l-1$ and $l+1$ are both sinks in
$Q'$, then $x_{P'_{l-1}}=\dfrac{1+\prod_{i\rightarrow l-1}x_i}{x_{l-1}}$ and
$x_{P'_{l+1}}=\dfrac{1+\prod_{i\rightarrow l+1}x_i}{x_{l+1}}$.
Then the result follows in this case, as we have
$x_{l-1}x_{l+1}x_{P'_l}=\dfrac{1+x_{l-1}x_{l+1}}{x_l}+\dfrac{\prod_{i\rightarrow l+1}x_i
+\prod_{i\rightarrow l-1}x_i+\prod_{i\rightarrow l+1}x_i\prod_{i\rightarrow l-1}x_i}{x_l}$. \\
(B.2) If $l+1$ is a sink and $l-1$ is not a sink in $Q'$, then
$x_{P'_{l+1}}=\dfrac{1+\prod_{i\rightarrow l+1}x_i}{x_{l+1}}$ and
$x_{P'_{l-1}}=\dfrac{1+x_{P'_{l-2}}x_l}{x_{l-1}}$. So
$x_{l-1}x_{l+1}x_{P'_l}=\dfrac{1+x_{l-1}x_{l+1}}{x_l}$
$+\dfrac{\prod_{i\rightarrow l+1}x_i+x_{P'_{l-2}}x_l+x_{P'_{l-2}}x_l\prod_{i\rightarrow l+1}x_i}{x_l}$
and the result follows. \\
(B.3)
If $l-1$ is a sink and $l+1$ is not a sink in $Q'$, then $x_{P'_{l+1}}=\dfrac{1+x_{P'_{l+2}}x_l}{x_{l+1}}$
and $x_{P'_{l-1}}=\dfrac{1+\prod_{i\rightarrow l-1}x_i}{x_{l-1}}$. So
$x_{l-1}x_{l+1}x_{P'_l}=\dfrac{1+x_{l-1}x_{l+1}}{x_l}+\dfrac{\prod_{i\rightarrow l-1}x_i
+x_{P'_{l+2}}x_l+x_{P'_{l+2}}x_l\prod_{i\rightarrow l-1}x_i}{x_l}$ and the result follows. \\
(B.4) If both $l+1$ and $l-1$ are not sinks in $Q'$, then
$x_{P'_{l+1}}=\dfrac{1+x_{P'_{l+2}}x_l}{x_{l+1}}$
and $x_{P'_{l-1}}=\dfrac{1+x_{P'_{l-2}}x_l}{x_{l-1}}$. Then
$x_{l-1}x_{l+1}x_{P'_l}=\dfrac{1+x_{l-1}x_{l+1}}{x_l}
+\dfrac{x_{P'_{l-2}}x_l+x_{P'_{l+2}}x_l+x_{P'_{l-2}}x_lx_{P'_{l+2}}x_l}{x_l}$
and the result follows. \\
\underline{(II) Let $l$ be a source}. We show that the cluster variables $x_l'$ and
$g(x_{P_l})$ are generated by $x_1, x_2, \cdots, x_n, x_{P'_1}, x_{P'_2}, \cdots, x_{P'_n}$. \\
(A)
Assume that there exists only one arrow $\alpha \in Q_1$ that $s(\alpha)=l$. Then
$t(\alpha)=l-1$ (note that $l=i+s$).
In this case $x_l'=\dfrac{1+x_{l-1}}{x_l}$ and $x_{P'_1}=\dfrac{1+x_{l-1}}{x_l}$.
So it is enough to show that $g(x_{P_l})$ is generated by
$x_1, x_2, \cdots, x_n, x_{P'_1}, x_{P'_2}, \cdots, x_{P'_n}$. \\
(A.1)
If $l-1$ is a sink in $Q$, then $x_{P_l}$ is a cluster variable in $\mu_l\mu_{l-1}(\mathbf{x})$
and so $g(x_{P_l})$ is the corresponding cluster variable in the cluster
$\mu_l\mu_{l-1}(\mu_l\mathbf{x})$. An easy calculation shows that
$g(x_{P_l})=\dfrac{x_l+x_{l-2}}{x_{l-1}}$. In this case
$x_lx_{P'_{l-1}}=1+x_{l-2}x_{P'_l}=\dfrac{x_l+x_{l-2}}{x_{l-1}}+x_{l-2}$ and the result follows. \\
(A.2)
Now assume that $l-1$ is not a sink in $Q$. Let $\beta$ be the (unique) arrow starting at $l-1$,
with $t(\beta)=l-2$. Then there exists a seed $(\mathbf{y}, Q'')$ which is
mutation equivalent to $(\mathbf{x}, Q)$, $y_l=x_l, y_{l-1}=x_{l-1}, y_{l-2}=x_{P_{l-2}}$ and $x_{P_l}$
is the corresponding cluster variable in the seed $\mu_l\mu_{l-1}(\mathbf{y}, Q'')$.
Then $g(x_{P_l})$ is the corresponding cluster variable in the seed
$\mu_l\mu_{l-1}\mu_l(\mathbf{y}, Q'')$ and so $g(x_{P_l})=\dfrac{x_l+x_{P'_{l-2}}}{x_{l-1}}$.
Also in this case $x_lx_{P'_{l-1}}=\dfrac{x_l+x_{P'_{l-2}}}{x_{l-1}}+x_{P'_{l-2}}$ and the result follows. \\
(B)
Next assume that there are two arrows $\alpha, \beta \in Q_1$ with $s(\alpha)=l=s(\beta)$ with
$t(\alpha)=l-1$ and $t(\beta)=l+1$. In this case
$x_l'=\dfrac{1+x_{l-1}x_{l+1}}{x_l}$, $x_{P'_{l}}=\dfrac{1+x_{l-1}x_{l+1}}{x_l}$.
Then it is enough to show that $g(x_{P_l})$ is generated by
$x_1, x_2, \cdots, x_n, x_{P'_1}, x_{P'_2}, \cdots, x_{P'_n}$. In this situation,
$g(x_{P_l})=\dfrac{1+x_{P'_{l+1}}x_{P'_{l-1}}}{\dfrac{1+x_{l-1}x_{l+1}}{x_l}}$.
Therefore $g(x_{P_l})=x_lx_{P'_{l+1}}x_{P'_{l-1}}-x_{l-1}x_{l+1}g(x_{P_l})+x_l$.
So it is enough to show that $x_{l-1}x_{l+1}g(x_{P_l})$ is generated by
$x_1, x_2, \cdots, x_n, x_{P'_1}, x_{P'_2}, \cdots, x_{P'_n}$.
We have six cases. \\
(B.1): Assume that $l+1$ is a sink in $Q$, there is exactly one arrow in
$Q_1$ with the target $l+1$
Assume that $l+1$ is a sink in $Q$ and there is no other arrow of $Q$
ending at $l+1$ and $l-1$ is a sink in $Q$. In this case
$x_{P'_{l+1}}=\dfrac{x_l+1+x_{l-1}x_{l+1}}{x_lx_{l+1}}$ and $x_{P'_{l-1}}$
$=\dfrac{x_l+x_{l-2}(1+x_{l-1}x_{l+1})}{x_lx_{l-1}}$.
An easy calculation shows that $x_{l-1}x_{l+1}g(x_{P_l})=x_l+x_{l-2}+1+x_{l-2}x_{P'_l}$
and the result follows. \\
(B.2): Assume that $l+1$ is a sink in $Q$, there is exactly one arrow in $Q_1$ with the target
$l+1$ and $l-1$ is not a sink in $Q$. In this case
$x_{P'_{l+1}}=\dfrac{x_l+1+x_{l-1}x_{l+1}}{x_lx_{l+1}}$ and
$x_{P'_{l-1}}=\dfrac{x_l+(1+x_{l-1}x_{l+1})x_{P'_{l-2}}}{x_lx_{l-1}}$.
Then $x_{l-1}x_{l+1}g(x_{P_l})=x_l+x_{P'_{l-2}}+1+x_{P'_{l-2}}x_{P'_l}$ and the result follows. \\
(B.3): Assume that $l+1$ is a sink in $Q$, there are exactly two arrows in $Q_1$
with the target $l+1$ and $l-1$ is a sink in $Q$. In this case
$x_{P'_{l+1}}=\dfrac{x_l+x_{l+2}(1+x_{l-1}x_{l+1})}{x_lx_{l+1}}$ and
$x_{P'_{l-1}}=\dfrac{x_l+x_{l-2}(1+x_{l-1}x_{l+1})}{x_lx_{l-1}}$. Then
$x_{l-1}x_{l+1}g(x_{P_l})=x_l+x_{l-2}+x_{l+2}+x_{l+2}x_{l-2}x_{P'_l}$ and the result follows. \\
(B.4): Assume that $l+1$ is a sink in $Q$, there are exactly two arrows in $Q_1$ with the
target $l+1$ and $l-1$ is not a sink in $Q$. In this case
$x_{P'_{l+1}}=\dfrac{x_l+x_{l+2}(1+x_{l-1}x_{l+1})}{x_lx_{l+1}}$ and
$x_{P'_{l-1}}=\dfrac{x_l+(1+x_{l-1}x_{l+1})x_{P'_{l-2}}}{x_lx_{l-1}}$. Then
$x_{l-1}x_{l+1}g(x_{P_l})=x_l+x_{P'_{l-2}}+x_{l+2}+x_{l+2}x_{P'_{l-2}}x_{P'_l}$ and the result
follows. \\
(B.5): Assume that $l+1$ is not a sink in $Q$ and that $l-1$ is a sink.
In this case
$x_{P'_{l+1}}=\dfrac{x_l+x_{P'_{l+2}}(1+x_{l-1}x_{l+1})}{x_lx_{l+1}}$ and
$x_{P'_{l-1}}=\dfrac{x_l+x_{l-2}(1+x_{l-1}x_{l+1})}{x_lx_{l-1}}$.
Then $x_{l-1}x_{l+1}g(x_{P_l})=x_l+x_{l-2}+x_{P'_{l+2}}+x_{P'_{l+2}}x_{l-2}x_{P'_l}$
and the result follows. \\
(B.6): Assume that $l+1$ is not a sink in $Q$,
and $l-1$ is not a sink in $Q$. In this case
$x_{P'_{l+1}}=\dfrac{x_l+x_{P'_{l+2}}(1+x_{l-1}x_{l+1})}{x_lx_{l+1}}$ and
$x_{P'_{l-1}}=\dfrac{x_l+(1+x_{l-1}x_{l+1})x_{P'_{l-2}}}{x_lx_{l-1}}$.
Then $x_{l-1}x_{l+1}g(x_{P_l})=x_l+x_{P'_{l-2}}+x_{P'_{l+2}}+x_{P'_{l+2}}x_{P'{l-2}}x_{P'_l}$
and the result follows.
\end{proof}

\begin{lemma}\label{lemm5} Let $Q\in \mathcal{Q}_n$ be the quiver from Figure~\ref{fig:quiverQ}.
Assume that there exists a sink or source vertex $l\in \{k+1, i+1\}$ in $Q$. Let
$\Sigma=(\mathbf{x}, Q)$ and $\Sigma'=(\mathbf{x}, \mu_l(Q))$.
If  $\mathcal{L}_P(\Sigma)=\mathcal{A}(\Sigma)$,
then $\mathcal{L}_P(\Sigma')=\mathcal{A}(\Sigma')$.
\end{lemma}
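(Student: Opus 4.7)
The proof would follow the same overall strategy as Lemma~\ref{lemm3}. By Theorem~\ref{thm2}, $\mathcal{A}(\mathbf{x}, \mu_l(Q)) = \mathcal{A}(\mathbf{x}', Q)$ (with $\mathbf{x}' = \mathbf{x} \cup \{\mu_l(x_l)\} \setminus \{x_l\}$) is generated by the variables $x_1, \ldots, x_{l-1}, x_l', x_{l+1}, \ldots, x_n$ together with $x_{P'_1}, \ldots, x_{P'_{l-1}}, g(x_{P_l}), x_{P'_{l+1}}, \ldots, x_{P'_n}$, where $g \colon \mathcal{A}(\mathbf{x}, Q) \to \mathcal{A}(\mathbf{x}', Q)$ is the isomorphism from Remark~\ref{rem:induced-isom}. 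Since $\Sigma'$ already contains every $x_i$ with $i\neq l$ and every $x_{P'_i}$, it suffices to show that $x_l'$ and $g(x_{P_l})$ lie in $\mathcal{L}_P(\Sigma')$. Without loss of generality we may assume $l = i+1$; the case $l = k+1$ is symmetric.

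I would then split into two main cases: (I) $l$ is a sink of $Q$, and (II) $l$ is a source of $Q$. Within each, one sub-divides according to (a) whether $s = 1$, so that $l$ is only adjacent to the triangle vertex $i$, or $s \geq 2$, so that $l$ has a second neighbour $i+2$; and (b) the orientations at $l-1 = i$ and $l+1 = i+2$ after mutating at $l$, which determine the radicals of $P'_l$ and $P'_{l\pm 1}$. In each sub-case the exchange relation gives $x_l'$ directly, and Lemma~\ref{lemm2} applied to minimal projective resolutions of the relevant radicals gives explicit expressions for $x_{P'_l}$ and $x_{P'_{l\pm 1}}$. When $l$ is a sink one has $g(x_{P_l}) = x_l$, and an identity of the shape $x_{l-1}x_{l+1} x_{P'_l} = x_l' + (\text{polynomial in the } x_j \text{ and } x_{P'_j})$ puts $x_l'$ into $\mathcal{L}_P(\Sigma')$. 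When $l$ is a source one has $x_l' = x_{P'_l}$, so only $g(x_{P_l})$ needs treating; expressing $x_{P_l}$ via a minimal mutation sequence from $\Sigma$ and transporting it through $g$ to $\Sigma'$ gives a formula that can be rearranged using the $x_{P'_{l\pm 1}}$ identities.

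The principal obstacle is case (II) when $l = i+1$ is a source, because the neighbour $l-1=i$ is now a vertex of the oriented triangle rather than an ordinary leg vertex. Consequently the radical of $P'_{l-1}$ involves the third triangle vertex $j$ (through the arrow reversed by the mutation) and is no longer simply $P'_{l-2}$ as in the analogous sub-cases of Lemma~\ref{lemm3}. The book-keeping should be carried out analogously to the six sub-cases (B.1)--(B.6) of Lemma~\ref{lemm3}, but with the additional verification that the triangle contribution to $x_{P'_{l-1}}$ either cancels, or is absorbed into one of the existing generators of $\mathcal{L}_P(\Sigma')$. Once this is checked, each sub-case reduces to a direct polynomial identity and the conclusion $g(x_{P_l}) \in \mathcal{L}_P(\Sigma')$ follows, completing the proof.
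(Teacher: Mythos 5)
Your proposal follows essentially the same route as the paper's proof: reduce via Theorem~\ref{thm2} to showing that $x_l'$ and $g(x_{P_l})$ lie in $\mathcal{L}_P(\Sigma')$, take $l=i+1$ without loss of generality, split into the sink and source cases (noting $g(x_{P_l})=x_l$ in the first and $x_l'=x_{P'_l}$ in the second), subdivide by the local arrow configuration, and use Lemma~\ref{lemm2} to express the relevant $x_{P'_\bullet}$. One correction of emphasis: the complication you attribute to case (II) --- that $l-1=i$ is a triangle vertex, so $\rad(P'_{l-1})$ drags in the vertex $j$ and, beyond it, $k$ and $k+1$ --- is equally present and in fact dominates case (I); the paper's heaviest casework (its subcases (A.1)--(A.4) on the subquiver at $j$, further subdivided by the arrows at $k$ and $k+1$, with appeals to Lemma~\ref{lemm3} to normalise orientations) occurs there, so the identity $x_{l-1}x_{l+1}x_{P'_l}=x_l'+(\cdots)$ in the sink case is not immediate but requires exactly that analysis.
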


\begin{proof}
As before, let $Q'=\mu_l(Q)$.
Without loss of generality we can assume that $l=i+1$ as mutation at $l$ does not
affect the rest of the quiver. In particular, mutation at $l$ does not change the arrows
incident with $j$ nor the arrows incident with $k$.

Let $g:\mathcal{A}(\mathbf{x}, Q)\rightarrow \mathcal{A}(\mathbf{x'}, Q)$ be the
isomorphism from Remark~\ref{rem:induced-isom},
where $\mathbf{x'}=\mathbf{x}\cup \{\mu_l(x_l)\}\backslash \{x_l\}$. By Theorem \ref{thm2}, it is
enough to show that the cluster variables $x_l'$ and $g(x_{P_l})$ are generated
by $x_1, x_2, \cdots, x_n, x_{P'_1}, x_{P'_2}, \cdots, x_{P'_n}$. \\
\underline{(I) Let $l$ be a sink}
Then $g(x_{P_l})=x_l$, so we only have to show
that $x_l'$ is generated by $x_1, x_2, \cdots, x_n, x_{P'_1}, x_{P'_2}, \cdots, x_{P'_n}$. \\
(A) Assume first that there is only one arrow in $Q$ ending at $l$ (i.e. $s=1$).
In this case $Q$ is as follows:
\begin{center}
$
\includegraphics[width=7cm]{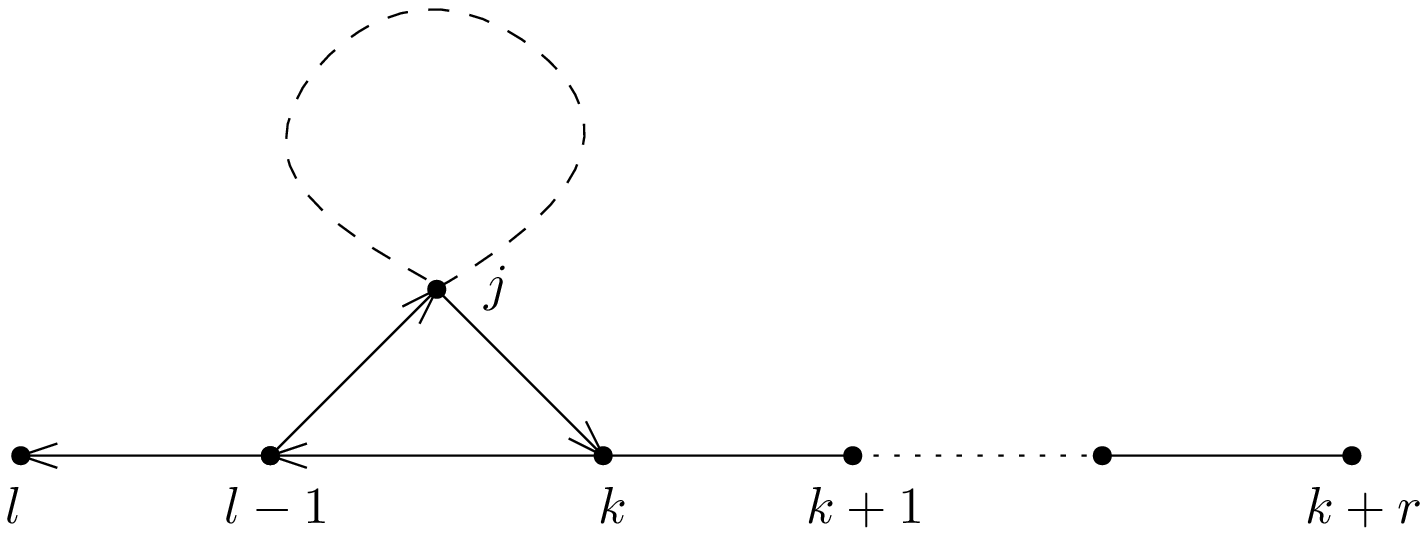}
$
\end{center}
We have $x_{P'_l}=\dfrac{1+x_{\rad(P'_l)}}{x_l}=\dfrac{1+x_{P'_{l-1}}}{x_l}$ and
$x_{P'_{l-1}}=\dfrac{1+x_lx_{\rad(P'_{l-1})}}{x_{l-1}}$, therefore
$x_{l-1}x_{P'_l}=\dfrac{1+x_{l-1}}{x_l}+x_{\rad(P'_{l-1})}$.
So it is enough to show that $x_{\rad(P'_{l-1})}$ is generated by the cluster variables
$x_1, x_2, \cdots, x_n, x_{P'_1}, x_{P'_2}, \cdots, x_{P'_n}$.
We distinguish four cases (A.1)-(A.4) for the subquiver attached to $j$: it can be only the vertex $j$, it
can start with an arrow towards $j$ or with an arrow from $j$ or it can have a triangle going
through $j$.

\noindent
(A.1) Assume that at $j$ there are only the two arrows $k\to j$ and $j\to k$ in
$Q$ and hence in $Q'$.
In this case
$x_{\rad(P'_{l-1})}=\dfrac{x_{l-1}+x_k}{x_j}$.
By the definition of $Q$, the vertex $k$ has 2 or 3 arrows incident with it.
If $Q$ (and so $Q'$) has only one arrow ending in $k$ and one
arrow starting from $k$ (in particular, the "leg at $k$" only consists of the vertex $k$), then
$x_kx_{P'_j}=\dfrac{x_{l-1}+x_k}{x_j}+1$ and the result follows (to calculate $x_{P_j'}$, we first
mutate $(x,Q')$ at $k$ and then at $j$, after this, the cluster variable corresponding to vertex $j$ is
$x_{P_j'}$.
If $Q'$ has two arrows ending at $k$ and one starting from $k$, then $x_kx_{P'_j}=\dfrac{x_{l-1}+x_k}{x_j}+x_{k+1}$,
and the result follows.
It remains to consider the situation where there are two arrows starting at $k$.
This is the trickiest situation as we also need to consider the vertex $k+1$ - whether it is a sink or
not.
If there are exactly two arrows in $Q_1'$ starting at $k$ (one of them $k\to k+1$)
and only one ending at $k$ {\em and}
the vertex $k+1$ has no other arrow ending at it,
$x_kx_{P'_j}=\dfrac{x_{l-1}+x_k}{x_j}+\dfrac{1+x_k}{x_{k+1}}=\dfrac{x_{l-1}+x_k}{x_j}+x_{P'_{k+1}}$,
and the result follows.
Assume now that there are exactly two arrows starting at $k$ and one ending at $k$.
Assume further that there are exactly two arrows incident with $k+1$. By Lemma~\ref{lemm3},
we can assume that in this case, $Q'$ looks as follows:
$$
\includegraphics[width=8cm]{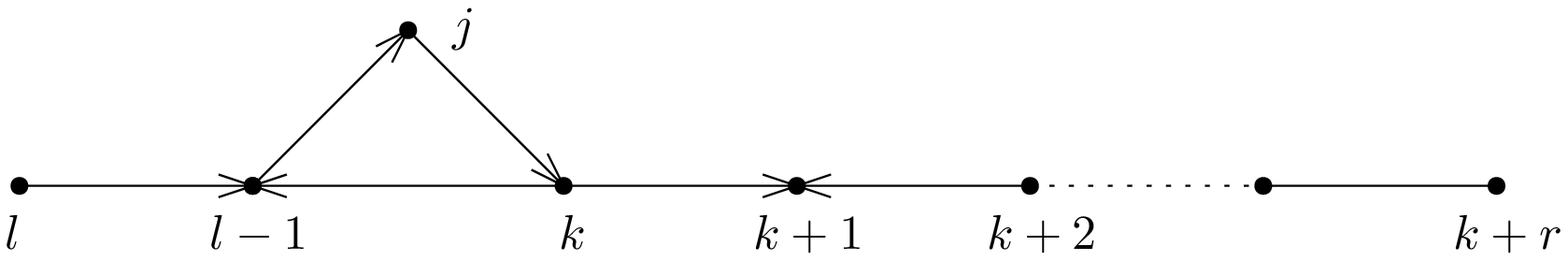}
$$
Then $x_kx_{P'_j}=\dfrac{x_{l-1}+x_k}{x_j}+\dfrac{1+x_kx_{k+2}}{x_{k+1}}
=\dfrac{x_{l-1}+x_k}{x_j}+x_{P'_{k+1}}$
and the result follows. \\
(A.2) Assume that there are three arrows in $Q_1'$ incident with $j$ and that the
one in the subquiver at $j$ starts at $m$ and ends at vertex $j$ (so $m\ne l-1$).
In this case $x_{\rad(P'_{l-1})}=\dfrac{x_mx_{l-1}+x_k}{x_j}$.
As before, we look at the vertex $k$ - it can have 2 or 3 arrows incident with it.
If there is only one arrow in $Q'$ ending at $k$ and only one arrow in $Q'$ starting at $k$, then
$x_kx_{P'_j}=\dfrac{x_{l-1}x_m+x_k}{x_j}+x_m$ (note that in this case, the leg of $Q$
at the vertex $k$ only consists of the vertex $k$).
If there are three arrows incident with $k$, two ending at $k$, one of them being
$k+1\to k$,
then $x_kx_{P'_j}=\dfrac{x_mx_{l-1}+x_k}{x_j}+x_mx_{k+1}$
and the result follows.
If there are two arrows in $Q'$ starting at $k$ (one of them being $k\to k+1$),
only one arrow ending at $k$ {\em and} there is
no other arrow incident with $k+1$, then
$x_kx_{P'_j}=\dfrac{x_{l-1}x_m+x_k}{x_j}+x_m\dfrac{1+x_k}{x_{k+1}}
=\dfrac{x_mx_{l-1}+x_k}{x_j}+x_mx_{P'_{k+1}}$, where $l-1=i$
and the result follows.
Lastly, assume that there are two arrows in $Q_1'$ starting at $k$ (one being $k\to k+1$),
one ending at $k$ and that there are exactly two arrows incident with $k+1$.
Then by Lemma~\ref{lemm3}, we can assume that the quiver $Q'$ looks as follows:
$$
\includegraphics[width=8cm]{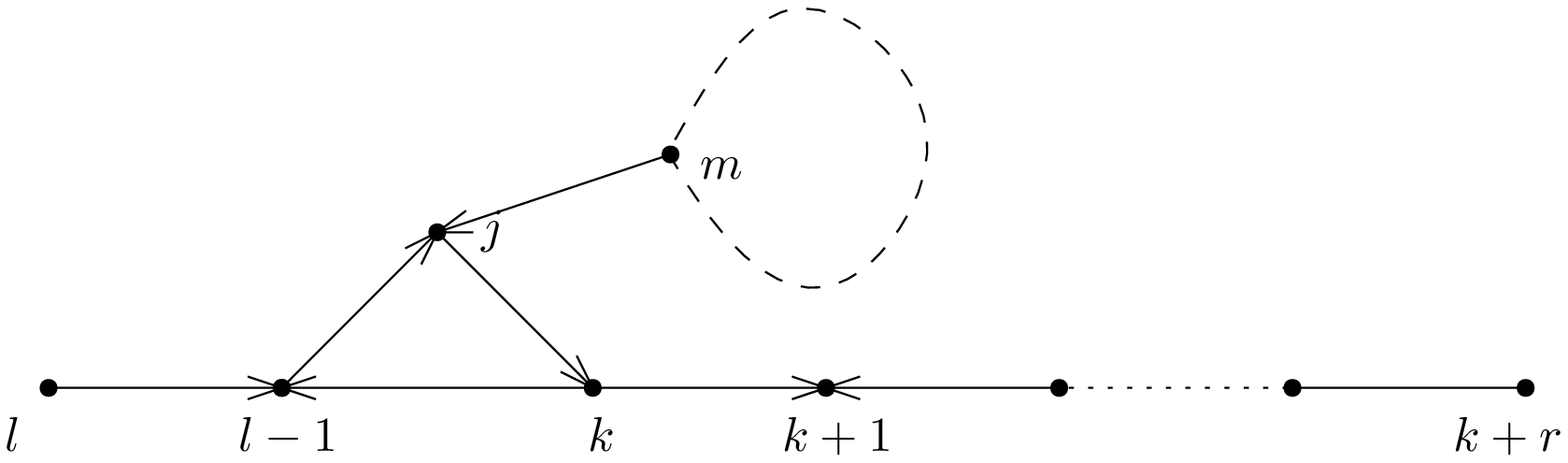}
$$
Then $x_kx_{P'_j}=\dfrac{x_mx_{l-1}+x_k}{x_j}+\dfrac{x_mx_j+x_mx_kx_{k+2}x_j}{x_jx_{k+1}}
=\dfrac{x_mx_{l-1}+x_k}{x_j}+x_mx_{P'_{k+1}}$ and the result follows. \\
(A.3) Assume that there are two arrows in $Q_1'$ starting at $j$ and only one arrow ending
at $j$. Let $j\to m$ be the arrow on the subquiver at $j$ (so $m\ne k$).
In this case
$x_{\rad(P'_{l-1})}=\dfrac{x_{P'_m}x_{l-1}+x_k}{x_j}$.
We look at the number and orientations of the arrows incident with $k$.
If in $Q'$ there is only one arrow ending and only one arrow starting at $k$ (i.e. $r=0$), then
$x_kx_{P'_j}=\dfrac{x_{l-1}x_{P'_m}+x_k}{x_j}+x_{P'_m}$ and
the result follows.
If there are two arrows in $Q'$ ending at $k$ (one of them being $k+1\to k$),
$x_kx_{P'_j}=\dfrac{x_{P'_m}x_{l-1}+x_k}{x_j}+x_{P'_m}x_{k+1}$,
and the result follows.
If there are two arrows starting at $k$ (one of them is $k\to k+1$) and one ending at $k$ {\em and}
there is no other arrow of $Q'$ incident with $k+1$,
then $x_kx_{P'_j}=\dfrac{x_{l-1}x_{P'_m}+x_k}{x_j}
+x_{P'_m}\dfrac{1+x_k}{x_{k+1}}=\dfrac{x_{P'_m}x_{l-1}+x_k}{x_j}+x_{P'_m}x_{P'_{k+1}}$
(with $l-1=i$) and the result follows.

Assume that there are two arrows starting at $k$ (one of them $k\to k+1$)
and one arrow ending at $k$ and that there are exactly two arrows incident with $k+1$.
Then by Lemma~\ref{lemm3} we can assume that in this case, $Q'$ is as follows:
$$
\includegraphics[width=8cm]{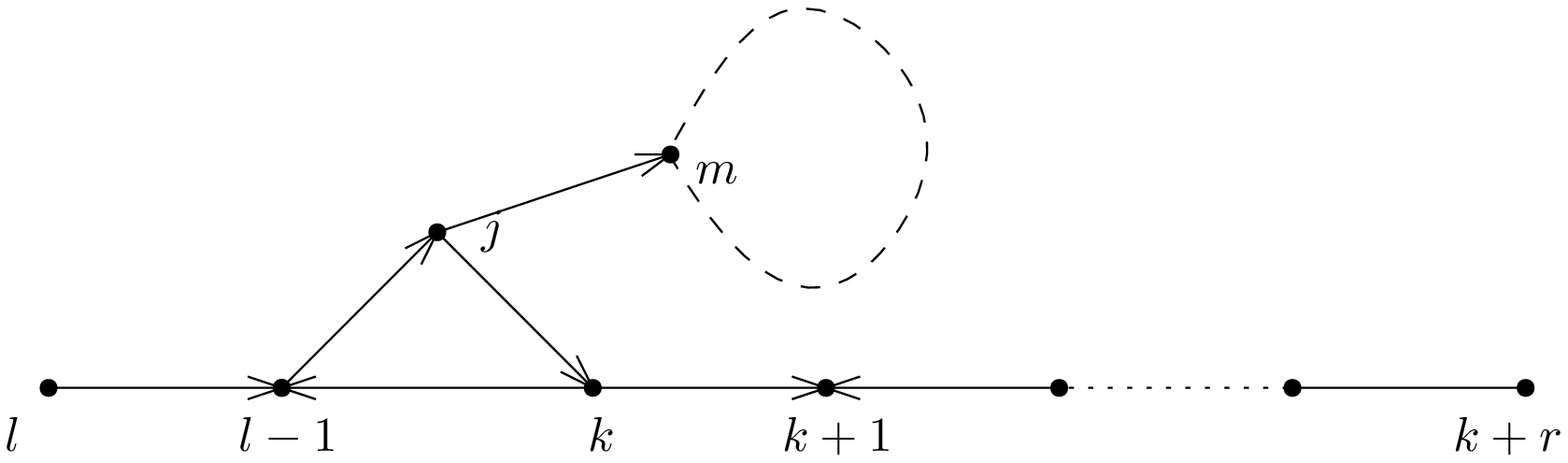}
$$
Then $x_kx_{P'_j}=\dfrac{x_{P'_m}x_{l-1}+x_k}{x_j}
+\dfrac{x_{P'_m}x_j+x_{P'_m}x_kx_{k+2}x_j}{x_jx_{k+1}}
=\dfrac{x_{P'_m}x_{l-1}+x_k}{x_j}+x_{P'_m}x_{P'_{k+1}}$ and the result follows. \\
(A.4)
Now assume that there are four arrows in $Q'$ which are incident with $j$.
Let $j\to m$ be the outgoing arrow in the subquiver at $j$ (i.e. $m\ne k$).
Then $x_{\rad(P'_{l-1})}=\dfrac{x_{M}x_{l-1}+x_k}{x_j}$ where $M$ is an indecomposable direct
summand of $\rad(P'_j)$ that the $m$'th coordinate of the dimension vector
$\underline{\dimm}(M)$ is nonzero.
As before, we have to consider the possibilities around the vertex $k$.
If there are only two arrows incident with $k$, then
$x_kx_{P'_j}=\dfrac{x_{l-1}x_{M}+x_k}{x_j}+x_{M}$.
Then it is enough to show that $x_{M}$ is generated by the variables
$x_1, x_2, \cdots, x_n, x_{P'_1}, x_{P'_2}, \cdots, x_{P'_n}$.
Similarly as before, we can show inductively, that $x_M$ is generated by the variables
$x_1, x_2, \cdots, x_n$, $x_{P'_1}, x_{P'_2}, \cdots, x_{P'_n}$ and the result
follows. We consider the quiver locally.
If there are three arrows incident with $k$, two incoming, one of them $k+1\to k$ then
$x_kx_{P'_j}=\dfrac{x_{M}x_{l-1}+x_k}{x_j}+x_{M}x_{k+1}$.
Inductively we can see that $x_{M}$ is generated by
the $x_i$ and the $x_{P'_i}$ and the result follows.
If there are three arrows incident with $k$, two outgoing, one of them $k\to k+1$ and
there is no other arrow incident with $k+1$
then
$x_kx_{P'_j}=\dfrac{x_{l-1}x_{M}+x_k}{x_j}+x_{M}\dfrac{1+x_k}{x_{k+1}}=
\dfrac{x_{M}x_{l-1}+x_k}{x_j}+x_{M}x_{P'_{k+1}}$ with $l-1=i$.
Inductively we can see that $x_{M}$ is generated by the $x_i$
and the $x_{P'_i}$ and the result follows.
Assume lastly that there are three arrows at $k$, two outgoing, one of them $k\to k+1$ and that
there are exactly two arrows at $k+1$. The quiver $Q'$ looks as follows:
$$
\includegraphics[width=8cm]{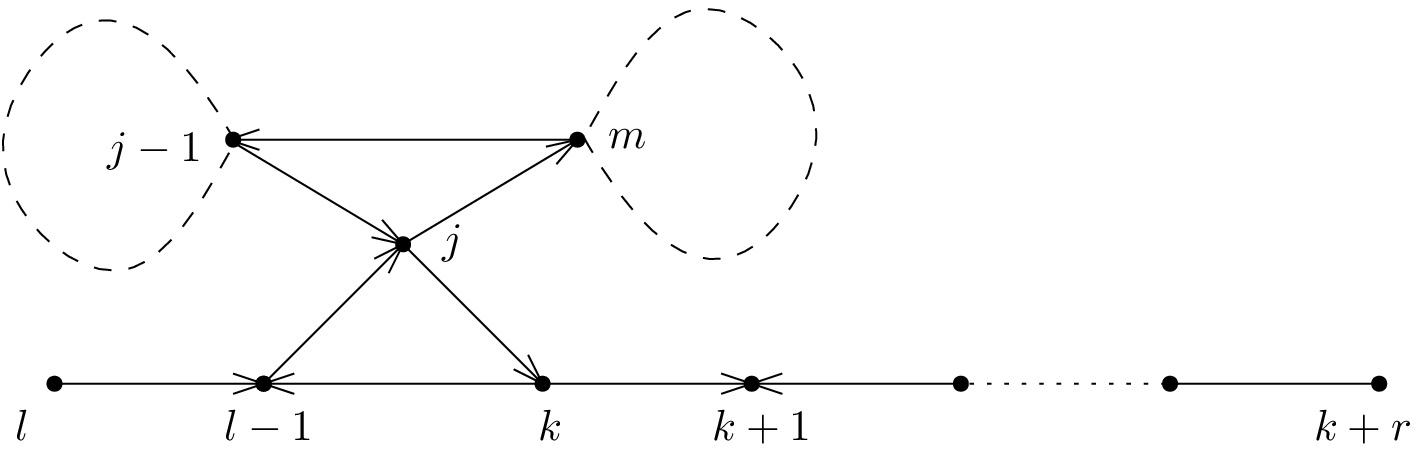}
$$
Then $x_kx_{P'_j}=\dfrac{x_{M}x_{l-1}+x_k}{x_j}
+\dfrac{x_{M}x_j+x_{M}x_kx_{k+2}x_j}{x_jx_{k+1}}=\dfrac{x_{M}x_{l-1}+x_k}{x_j}+x_{M}x_{P'_{k+1}}$.
Inductively we can see that $x_{M}$ is generated
by the $\{x_i\}_i$ and $\{x_{P'_i}\}_i$ and the result follows.

\noindent
(B) Now assume that there are two arrows ending at $l$, so $Q$ is of this form:
$$
\includegraphics[width=8cm]{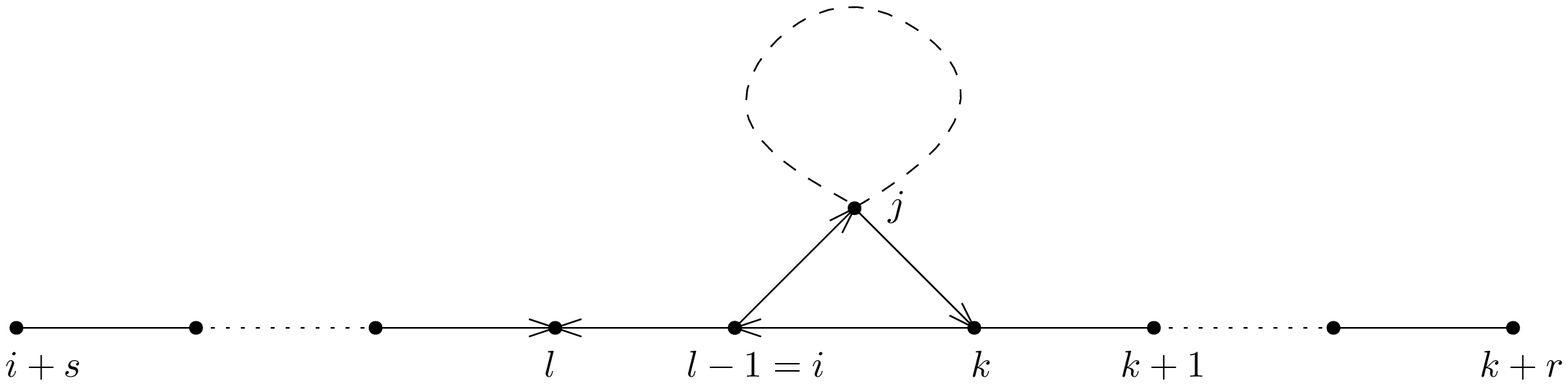}
$$
We show that $x_l'$ is generated by the $\{x_i\}_i$ and $\{x_{P'_i}\}_i$.
We have three cases. \\
(B.1) Assume that there is only one arrow in $Q$ incident with $l+1$ (and so $s=2$).
In this case
$x_{P'_l}=\dfrac{1+x_{\rad(P'_l)}}{x_l}=\dfrac{1+x_{P'_{l+1}}x_{P'_{l-1}}}{x_l}$ and so
$x_{l+1}x_{l-1}x_{P'_l}=\dfrac{1+x_{l-1}x_{l+1}}{x_l}+x_{\rad(P'_{l-1})}+1+x_lx_{\rad(P'_{l-1})}$.
The same argument as in the first part of the proof
shows that $x_{\rad(P'_{l-1})}$ is generated by the $\{x_i\}_i$ and $\{x_{P'_i}\}_i$ and so
the result follows. \\
(B.2) Assume that there are two arrows in $Q$ incident with $l+1$, one $l+1\to l$ and one
$l+2\to l+1$.
In this case $x_{l+1}x_{l-1}x_{P'_l}=\dfrac{1+x_{l-1}x_{l+1}}{x_l}+x_{\rad(P'_{l-1})}
+x_{l+2}+x_lx_{l+2}x_{\rad(P'_{l-1})}$.
The same argument as in the first part of the proof shows that $x_{\rad(P'_{l-1})}$ is generated
by the $\{x_i\}_i$ and by the $\{x_{P'_i}\}_i$ and so the result follows. \\
B.3) Assume that in $Q$ there are two arrows incident with $l+1$ and that $l+1$ is a source.
In this case $x_{l+1}x_{l-1}x_{P'_l}=\dfrac{1+x_{l-1}x_{l+1}}{x_l}
+x_{\rad(P'_{l-1})}+x_{P'_{l+2}}+x_lx_{P'_{l+2}}x_{\rad(P'_{l-1})}$.
The same argument as in the first part of the proof shows that $x_{\rad(P'_{l-1})}$ is generated by
the $\{x_i\}_i$ and the $\{x_{P'_i}\}_i$ and so the result follows. \\

\noindent
\underline{(II) Let $l$ be a source in $Q$}
We show that $g(x_{P_l})$ and $x_l'$ are generated by the variables $x_i$ and $x_{P'_i}$. \\
(A) Assume that there is only one arrow incident with $l$, so $Q$ looks as follows:
$$
\includegraphics[width=8cm]{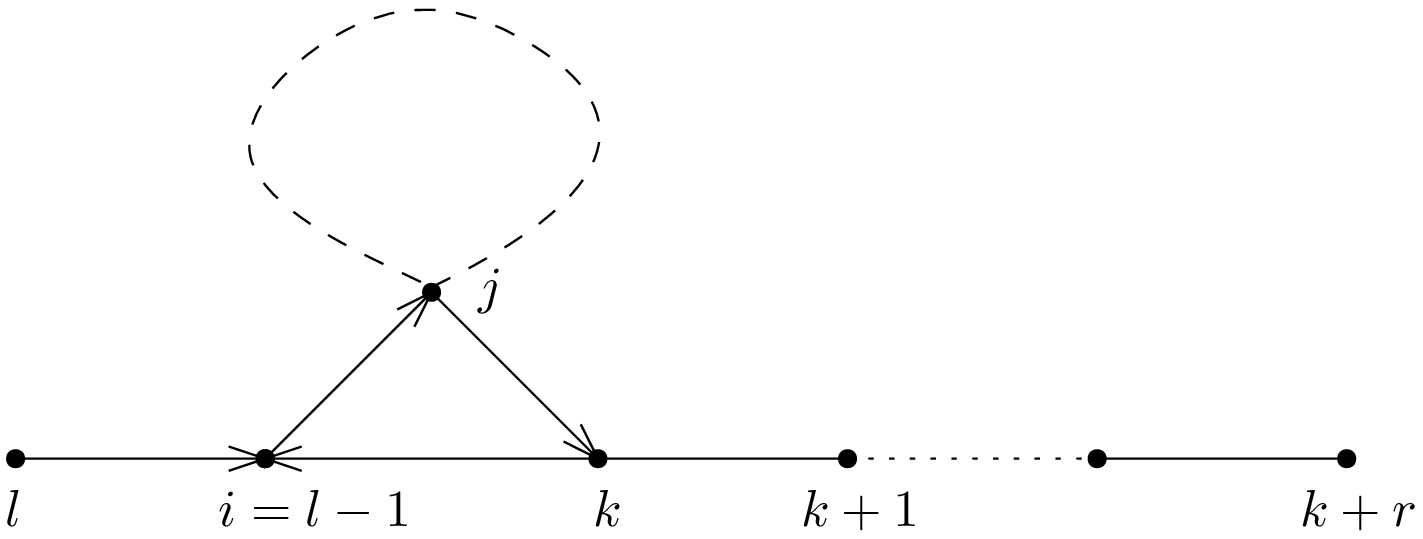}
$$
Here,
$x'_l=\dfrac{x_{l-1}+1}{x_l}=x_{P'_l}$ and if we write $\rad(P'_{l-1})=S_l\oplus M$,
we have $g(x_{P_l})=\dfrac{x_{l}+x_M}{x_{l-1}}$.
Then
$x_lx_{P'_{l-1}}=\dfrac{x_{l}+x_M}{x_{l-1}}+x_M$
The same argument as in the first part of the proof shows that $x_{M}$ is generated by
the variables $\{x_i\}_i$ and $x_{P'_i}$, so we are done. \\
(B) Assume next that two arrows start from $l$, $l\to l+1$ and $l\to l-1$.
In this case, $x'_l=\dfrac{x_{l-1}x_{l+1}+1}{x_l}=x_{P'_l}$.
We show that $g(x_{P_l})$ is generated by the $\{x_i\}_i$ and the $x_{P'_i}$.
Since
$g(x_{P_l})=\dfrac{1+x_{P'_{l+1}}x_{P'_{l-1}}}{\dfrac{x_{l-1}x_{l+1}+1}{x_l}}$ we get
$g(x_{P_l})=x_l+x_lx_{P'_{l+1}}x_{P'_{l-1}}-x_{l-1}x_{l+1}g(x_{P_l})$
and so it is enough to show that $x_{l-1}x_{l+1}g(x_{P_l})$ is generated by the
$\{x_i\}_i$ and by the $\{x_{P'_i}\}_i$.
We have three cases to consider. \\
(B.1) Assume that the arrow $l\to l+1$ is the only arrow incident with $l+1$.
In this case $g(x_{P_l})=\dfrac{x_l^2+x_lx_M+x_l+x_M(1+x_{l-1}x_{l+1})}{x_lx_{l-1}x_{l+1}}$,
where $M$ is defined through $\rad(P'_{l-1})=S_l\oplus M$.
Then $x_{l-1}x_{l+1}g(x_{P_l})=x_l+x_M+x_{l-1}x_{P'_{l-1}}$. The same argument as in the
first part of the proof shows that $x_{M}$ is generated by the $\{x_i\}_i$ and by the $\{x_{P'_i}\}_i$
and the result follows. \\
(B.2) Assume that $l+1$ is a sink with two arrows $l+2\to l+1$ and $l\to l+1$.
In this case
$g(x_{P_l})=\dfrac{x_l^2+x_lx_M+x_lx_{l+2}+x_{l+2}x_M(1+x_{l-1}x_{l+1})}{x_lx_{l-1}x_{l+1}}$,
where $M$ is defined through $\rad(P'_{l-1})=S_l\oplus M$.
Then $x_{l-1}x_{l+1}g(x_{P_l})=x_l+x_M+x_{l+2}x_{l-1}x_{P'_{l-1}}$.
The same argument as in the first part of the proof shows that $x_{M}$ is generated
by the $\{x_i\}_i$ and by the $\{x_{P'_i}\}_i$
and the result follows.

\noindent
(B.3) Assume thirdly that the arrows incident with $l+1$ are $l\to l+1$ and $l+1\to l+2$.
In this case,
$g(x_{P_l})=\dfrac{x_l^2+x_lx_M+x_lx_{P'_{l+2}}+x_{P'_{l+2}}x_M(1+x_{l-1}x_{l+1})}{x_lx_{l-1}x_{l+1}}$,
where $\rad(P'_{l-1})=S_l\oplus M$.
Then $x_{l-1}x_{l+1}g(x_{P_l})=x_l+x_M+x_{l-1}x_{P'_{l+2}}x_{P'_{l-1}}$.
The same argument as in the first part of the proof shows that $x_{M}$ is generated
by the $x_i, x_{P'_i}$
and the result follows.
\end{proof}

Next we have three lemmas considering the cases where the three vertices
$i,j,k$ are in a 2-path but not in a 3-cycle and where we consider what happens under mutation
at the vertex $i$.

\begin{lemma}\label{lemm6}
Let $Q\in \mathcal{Q}_n$ be a quiver as follows.
$$
\includegraphics[width=2cm]{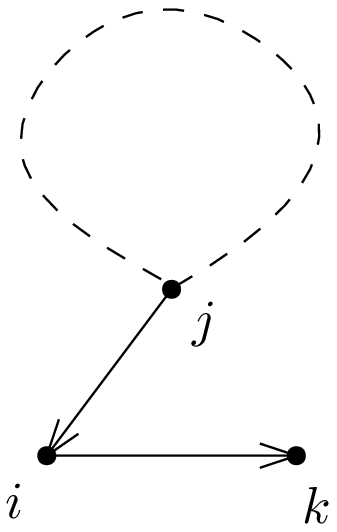}
$$
Let $\Sigma=(\mathbf{x}, Q)$ and $\Sigma'=(\mathbf{x}, \mu_i(Q))$. If $\mathcal{L}_P(\Sigma)=\mathcal{A}(\Sigma)$, then $\mathcal{L}_P(\Sigma')=\mathcal{A}(\Sigma')$.
\end{lemma}
\begin{proof}
Let $g:\mathcal{A}(\mathbf{x}, Q)\rightarrow \mathcal{A}(\mathbf{x'}, Q)$ be the
isomorphism from Theorem~\ref{thm2},
where $\mathbf{x'}=\mathbf{x}\cup \{\mu_i(x_i)\}\backslash \{x_i\}$. By Theorem \ref{thm2}, it
is enough to show that the two cluster variables $x_i'$ and $g(x_{P_i})$ are generated
by $x_1, x_2, \cdots, x_n, x_{P'_1}, x_{P'_2},\cdots, x_{P'_n}$. Now $x_i'=\dfrac{x_j+x_k}{x_i}$
and an easy calculation shows that $g(x_{P_i})=\dfrac{x_i+x_j}{x_k}$.
We have $x_{P'_k}=\dfrac{1+x_{\rad(P'_k)}}{x_k}=\dfrac{x_i+x_j+x_k}{x_ix_k}$.
Therefore $\dfrac{x_i+x_j}{x_k}=x_ix_{P'_k}-1$, $\dfrac{x_j+x_k}{x_i}=x_kx_{P'_k}-1$ and
the result follows.
\end{proof}

\begin{lemma}\label{lemm7}
Let $Q\in \mathcal{Q}_n$ be a quiver as follows (with arbitrary orientation between $k+1$ and $k+r$)
$$
\includegraphics[width=7cm]{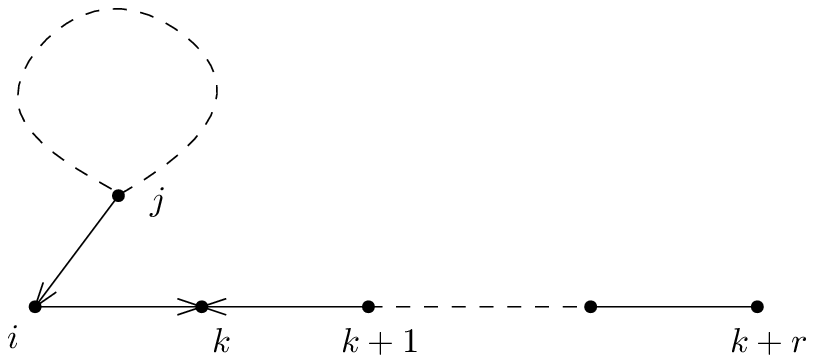}
$$
Let $\Sigma=(\mathbf{x}, Q)$ and $\Sigma'=(\mathbf{x}, \mu_i(Q))$. If
$\mathcal{L}_P(\Sigma)=\mathcal{A}(\Sigma)$, then $\mathcal{L}_P(\Sigma')=\mathcal{A}(\Sigma')$.
\end{lemma}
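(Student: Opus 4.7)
The strategy follows that of Lemmas~\ref{lemm3}, \ref{lemm5}, and \ref{lemm6}. Set $Q' = \mu_i(Q)$ and let $g: \mathcal{A}(\mathbf{x}, Q) \to \mathcal{A}(\mathbf{x}', Q)$ be the isomorphism from Remark~\ref{rem:induced-isom} with $\mathbf{x}' = \mathbf{x} \cup \{\mu_i(x_i)\} \setminus \{x_i\}$. By Theorem~\ref{thm2}, it suffices to express $x_i'$ and $g(x_{P_i})$ as polynomials in $\{x_l\}_l \cup \{x_{P'_l}\}_l$.

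Because $i$ has only the two neighbors $j$ and $k$ in $Q$, the local mutation formula gives $x_i' = (x_j + x_k)/x_i$, just as in Lemma~\ref{lemm6}. The computation of $g(x_{P_i})$ follows the same pattern as in Lemma~\ref{lemm6}: one tracks the mutation sequence defining $x_{P_i}$ and composes it with $\mu_i$, leading to $g(x_{P_i}) = (x_i + x_j)/x_k$, since the leg at $k$ does not enter the mutation sequence producing $x_{P_i}$. Next, I would apply Lemma~\ref{lemm2} to $x_{P'_k}$: this time $\rad(P'_k)$ has both a summand corresponding to the $i$-side (as in Lemma~\ref{lemm6}) and a summand coming from the leg, so $x_{\rad(P'_k)}$ acquires an extra factor of either $x_{k+1}$ or $x_{P'_{k+1}}$ depending on whether the arrow between $k$ and $k+1$ is oriented $k+1 \to k$ or $k \to k+1$.

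The main obstacle is the case analysis on the orientation of the arrow between $k$ and $k+1$, and, when this arrow is $k \to k+1$, on whether $k+1$ is a sink, a source, or an intermediate vertex in the leg. In each subcase the argument is the same: multiply $x_{P'_k}$ by a suitable monomial in the variables of $\mathbf{x}'$ and cancel the $x_i'$-type term, thereby reducing the problem to showing that the residual $x_{\rad(\cdots)}$-term lies in $\mathcal{L}_P(\Sigma')$. This reduction is handled by iterating Lemma~\ref{lemm2} along the leg vertices $k+1, \ldots, k+r$, exactly as in parts (A.1)--(A.4) of the proof of Lemma~\ref{lemm5}. The bookkeeping is technical but mechanical; no new idea is needed beyond combining the arguments of Lemmas~\ref{lemm3}, \ref{lemm5}, and \ref{lemm6}.
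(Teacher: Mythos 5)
Your overall strategy --- reduce via Theorem~\ref{thm2} to expressing $x_i'$ and $g(x_{P_i})$ in the generators $\{x_l\}_l\cup\{x_{P'_l}\}_l$ of $\mathcal{L}_P(\Sigma')$, and then use Lemma~\ref{lemm2} --- is the right one and is what the paper does. But two concrete points go wrong. First, the formula $g(x_{P_i})=(x_i+x_j)/x_k$ is incorrect: the leg at $k$ \emph{does} enter. Even though the mutation sequence computing $x_{P_i}$ only mutates at $i$ and $k$, the exchange relation at $k$ involves the neighbour $k+1$, and the correct value is $g(x_{P_i})=(x_i+x_jx_{k+1})/x_k$. (Check on $j\to i\to k\leftarrow k{+}1$ with $(j,i,k,k{+}1)=(1,2,3,4)$: one finds $x_{P_2}=(x_1x_2x_4+x_1+x_3)/(x_2x_3)$ and hence $g(x_{P_2})=(x_1x_4+x_2)/x_3$.) With your target the cancellation against $x_{P'_k}=\bigl(x_i+x_{k+1}(x_j+x_k)\bigr)/(x_ix_k)$ does not close up; with the correct target one gets $g(x_{P_i})=x_ix_{P'_k}-x_{k+1}$ in a single line, with no case analysis on the leg at all.

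Second, your case analysis is aimed at the wrong vertex, and this conceals the real difficulty, which is producing $x_i'=(x_j+x_k)/x_i$. From $x_{P'_k}$ one only gets $x_kx_{P'_k}=1+x_{k+1}\,x_i'$, and one cannot divide by $x_{k+1}$ inside $\mathcal{L}_P(\Sigma')$; so the Lemma~\ref{lemm6}-style trick of reading both $x_i'$ and $g(x_{P_i})$ off the single variable $x_{P'_k}$ breaks down precisely because of that extra factor $x_{k+1}$. The paper instead obtains $x_i'$ from $x_{P'_i}=(1+x_{\rad(P'_i)})/x_i$, and since $\rad(P'_i)$ is supported towards $j$ in $\mu_i(Q)$, the case distinction that is actually needed concerns the subquiver attached at $j$ (a source with one arrow, a source with two arrows, an incoming arrow $j{+}1\to j$, or $j$ trivalent on an oriented $3$-cycle), not the orientation at $k+1$ or whether $k+1$ is a sink or source. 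Your proposal never invokes $x_{P'_i}$ or the structure at $j$, so as written it has no route to $x_i'$.
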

\begin{proof}
Let $\mathbf{x'}=\mathbf{x}\cup \{\mu_i(x_i)\}\backslash \{x_i\}$ and
$g:\mathcal{A}(\mathbf{x}, Q)\rightarrow \mathcal{A}(\mathbf{x'}, Q)$ be the isomorphism from
Remark~\ref{rem:induced-isom}. By
Theorem~\ref{thm2}, it is enough to show that the two cluster variables
$x_i'$ and $g(x_{P_i})$ are generated by the variables
$x_1, x_2, \cdots, x_n, x_{P'_1}, x_{P'_2}, \cdots, x_{P'_n}$.
Now
$x_i'=\dfrac{x_j+x_k}{x_i}$ and one can calculate that $g(x_{P_i})=\dfrac{x_i+x_jx_{k+1}}{x_k}$.
We have $x_{P'_k}=\dfrac{1+x_{k+1}x_{\rad(P'_k)}}{x_k}=\dfrac{x_i+x_{k+1}(x_j+x_k)}{x_ix_k}$.
Then $\dfrac{x_i+x_jx_{k+1}}{x_k}=x_ix_{P'_k}-x_{k+1}$ and $x_{P'_i}=\dfrac{1+x_{\rad(P'_i)}}{x_i}$.
We have four cases to consider. \\
Case (1): $j$ is a source vertex in $Q$ and has only one arrow (the arrow $j\to i$).
In this case $x_{\rad(P'_i)}=\dfrac{x_i+x_k}{x_j}$ and so $x_{P'_i}=\dfrac{x_j+x_i+x_k}{x_ix_j}$.
Then $\dfrac{x_j+x_k}{x_i}=x_jx_{P'_i}-1$ and the result follows. \\
Case (2): $j$ is a source vertex in $Q$, with two arrows $j\to i$ and $j\to j+1$ from $j$.
In this case $x_{\rad(P'_i)}=\dfrac{x_ix_{P'_{j+1}}+x_k}{x_j}$ and so
$x_{P'_i}=\dfrac{x_j+x_ix_{P'_{j+1}}+x_k}{x_ix_j}$.
Then $\dfrac{x_j+x_k}{x_i}=x_jx_{P'_i}-x_{P'_{j+1}}$ and the result follows. \\
Case (3): The vertex $j$ has two arrows, $j\to i$ and $j+1\to j$.
In this case $x_{\rad(P'_i)}=\dfrac{x_{j+1}x_i+x_k}{x_j}$,
and so $x_{P'_i}=\dfrac{x_j+x_{j+1}x_i+x_k}{x_ix_j}$.
Then $\dfrac{x_j+x_k}{x_i}=x_jx_{P'_i}-x_{j+1}$ and the result follows. \\
Case (4): There are three arrows incident with $j$.
In this case $x_{\rad(P'_i)}=\dfrac{x_ix_{M}+x_k}{x_j}$, where $M$ is defined by
$\rad(P'_j)=S_k\oplus M$.
Then $x_{P'_i}=\dfrac{x_j+x_{M}x_i+x_k}{x_ix_j}$ and so $\dfrac{x_j+x_k}{x_i}=x_jx_{P'_i}-x_{M}$.
The same argument as in the first part of the proof of Lemma \ref{lemm5} shows that $x_{M}$ is
generated by $x_1, x_2, \cdots, x_n, x_{P'_1}, x_{P'_2}, \cdots, x_{P'_n}$ and the result follows.
\end{proof}

\begin{lemma}\label{lemm8}
Let $Q\in \mathcal{Q}_n$ be a quiver as follows.
$$
\includegraphics[width=9cm]{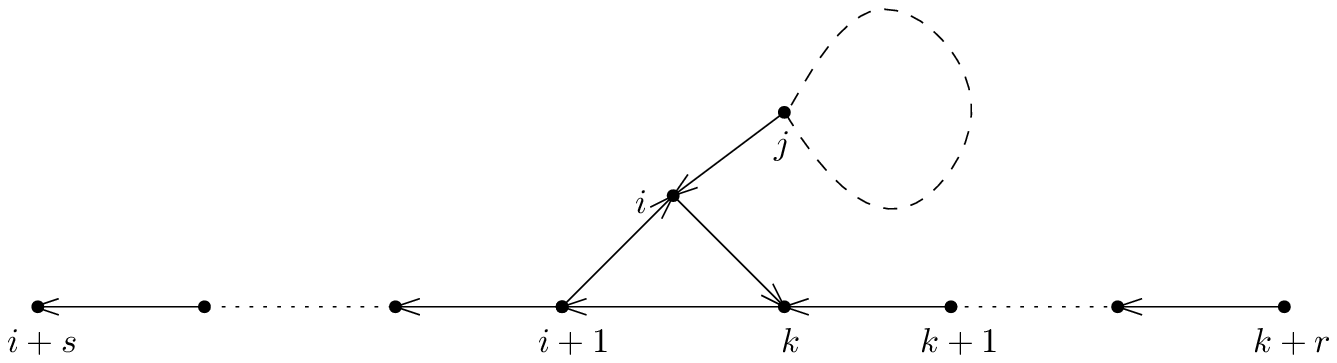}
$$
Let $\Sigma=(\mathbf{x}, Q)$ and $\Sigma'=(\mathbf{x}, \mu_i(Q))$. If
$\mathcal{L}_P(\Sigma)=\mathcal{A}(\Sigma)$, then
$\mathcal{L}_P(\Sigma')=\mathcal{A}(\Sigma')$.
\end{lemma}

\begin{proof}
Let $\mathbf{x'}=\mathbf{x}\cup \{\mu_i(x_i)\}\backslash \{x_i\}$ and
$g:\mathcal{A}(\mathbf{x}, Q)\rightarrow \mathcal{A}(\mathbf{x'}, Q)$ be the
isomorphism from Remark~\ref{rem:induced-isom}.

By Theorem \ref{thm2}, it is enough to show that the cluster variables $x_i'$ and $g(x_{P_i})$
are generated by $x_1, x_2, \cdots, x_n, x_{P'_1}, x_{P'_2}, \cdots, x_{P'_n}$.
Now $x_i'=\dfrac{x_{i+1}x_j+x_k}{x_i}$ and one can check that
$g(x_{P_i})=\dfrac{x_i+x_jx_{k+1}}{x_k}$.
Also
$x_{P'_k}=\dfrac{1+x_{k+1}x_{\rad(P'_k)}}{x_k}$ and $x_{\rad(P'_k)}=\dfrac{x_kx_{P'_{i+1}}+x_j}{x_i}$
Then $x_ix_{P'_k}=\dfrac{x_i+x_{k+1}x_j}{x_k}+x_{k+1}x_{P'_{i+1}}$ and so
$g(x_{P_i})=x_ix_{P'_k}-x_{k+1}x_{P'_{i+1}}$.
Now we show that $\dfrac{x_{i+1}x_j+x_k}{x_i}$ is generated
by $x_1, x_2, \cdots, x_n, x_{P'_1}, x_{P'_2}, \cdots, x_{P'_n}$.
Note that $x_{P'_i}=\dfrac{1+x_{\rad(P'_i)}}{x_i}$. We have four cases. \\
Case (1): $j$ is a source vertex in $Q$ and there are no other vertices incident with $j$.
In this case $x_{\rad(P'_i)}=\dfrac{x_i+x_k}{x_j}x_{P'_{i+1}}$.
Also, $x_{P'_{i+1}}=\dfrac{1+x_ix_{P'_{i+2}}}{x_{i+1}}$ and so we get
$x_{P'_i}=\dfrac{x_jx_{i+1}+x_i+x_k+(x_i+x_k)x_ix_{P'_{i+2}}}{x_ix_jx_{i+1}}$.
Thus $\dfrac{x_{i+1}x_j+x_k}{x_i}=x_jx_{i+1}x_{P'_i}-1-(x_i+x_k)x_{P'_{i+2}}$ and the result follows. \\
Case (2): $j$ is a source vertex in $Q$, with two arrows
$j\to i$ and $j\to j+1$.
In this case $x_{\rad(P'_i)}=\dfrac{x_ix_{P'_{j+1}}+x_k}{x_j}x_{P'_{i+1}}$ and so
$x_{P'_i}=\dfrac{x_jx_{i+1}+x_ix_{P'_{j+1}}+x_k+(x_ix_{P'_{j+1}}+x_k)x_ix_{P'_{i+2}}}{x_ix_jx_{i+1}}$.
Then $\dfrac{x_{i+1}x_j+x_k}{x_i}=x_jx_{i+1}x_{P'_i}-x_{P'_{j+1}}-(x_ix_{P'_{j+1}}+x_k)x_{P'_{i+2}}$
and the result follows. \\
Case (3): There are two arrows at $j$, one $j\to i$ and one $j+1\to j$.
In this case $x_{\rad(P'_i)}=\dfrac{x_{j+1}x_i+x_k}{x_j}x_{P'_{i+1}}$,
and
so $x_{P'_i}=\dfrac{x_jx_{i+1}+x_ix_{j+1}+x_k+(x_ix_{j+1}+x_k)x_ix_{P'_{i+2}}}{x_ix_jx_{i+1}}$.
Then $\dfrac{x_{i+1}x_j+x_k}{x_i}=x_jx_{i+1}x_{P'_i}-x_{j+1}-(x_ix_{j+1}+x_k)x_{P'_{i+2}}$ and
the result follows. \\
Case (4): There are three arrows at $j$, two starting at $j$, one ending at $j$.
In this case $x_{\rad(P'_i)}=\dfrac{x_ix_{M}+x_k}{x_j}x_{P'_{i+1}}$, where $M$ is defined through
$rad(P'_j)=S_k\oplus M$.
Then $x_{P'_i}=\dfrac{x_jx_{i+1}+x_{M}x_i+x_k+x_ix_{P'_{i+2}}(x_ix_M+x_k)}{x_ix_{i+1}x_j}$ and so
$\dfrac{x_{i+1}x_j+x_k}{x_i}=x_jx_{i+1}x_{P'_i}-x_{M}-(x_ix_{M}+x_k)x_{P'_{i+2}}$.
The same argument as in the first part of the proof of Lemma \ref{lemm5} shows that $x_{M}$
is generated by $x_1, x_2, \cdots, x_n, x_{P'_1}, x_{P'_2}, \cdots, x_{P'_n}$ and the result follows.
\end{proof}

\begin{theorem}\label{thm3}
Let $Q$ be a quiver which is mutation equivalent to a quiver of type $A_n$ and
$\Sigma=(\mathbf{x}, Q)$. Then $\mathcal{L}_P(\Sigma)=\mathcal{A}(\Sigma)$.
\end{theorem}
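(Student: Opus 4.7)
My plan is to argue by induction on the number $c(Q)$ of oriented $3$-cycles in $Q$. Since $Q\in\mathcal{Q}_n$ by Lemma~\ref{lemm4}, every non-trivial cycle in the underlying graph of $Q$ is an oriented $3$-cycle, so the base case $c(Q)=0$ amounts to $Q$ being acyclic, which is handled immediately by Theorem~\ref{thm1}.

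For the inductive step, let $c(Q)\ge 1$ and pick an oriented $3$-cycle $C$ on vertices $i,j,k$. The defining properties of $\mathcal{Q}_n$ imply that each vertex of $C$ has at most one further neighbour outside $C$, and this neighbour is the starting point of a type $A$ leg (or, in one of the more delicate cases, of a second $3$-cycle sharing a single vertex with $C$). I would choose a vertex $v$ of $C$ such that $\mu_v$ opens $C$ into a $2$-path without creating a new cycle elsewhere; the constraints of Lemma~\ref{lemm4} guarantee this is always possible. Then $Q^{\vee}:=\mu_v(Q)$ is again in $\mathcal{Q}_n$ with $c(Q^{\vee})=c(Q)-1$, and by the inductive hypothesis $\mathcal{L}_P(\Sigma^{\vee})=\mathcal{A}(\Sigma^{\vee})$ for $\Sigma^{\vee}=(\mathbf{x},Q^{\vee})$.

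It then remains to transport this equality across the single mutation $\mu_v$ back to $\Sigma$. Near $v$, the quiver $Q^{\vee}$ is a $2$-path $j\to v\to k$ (up to relabelling) whose endpoints may or may not carry additional legs; this is precisely the setup of Lemmas~\ref{lemm6}, \ref{lemm7} or \ref{lemm8}, depending on how many of the two outer vertices host a leg. If the legs around $C$ in $Q^{\vee}$ are not already in the orientation or length assumed by the applicable statement, I would first apply Lemmas~\ref{lemm3} and \ref{lemm5} to mutate along those legs -- these lemmas preserve the property $\mathcal{L}_P=\mathcal{A}$ -- bringing $Q^{\vee}$ into the required normal form. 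A single application of the appropriate one among Lemmas~\ref{lemm6}, \ref{lemm7}, \ref{lemm8}, with $Q=\mu_v(Q^{\vee})$ playing the role of the mutated quiver, then delivers $\mathcal{L}_P(\Sigma)=\mathcal{A}(\Sigma)$ and completes the induction.

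The main obstacle is the finite but somewhat intricate combinatorial case analysis needed to verify that every embedding of an oriented $3$-cycle inside a quiver of $\mathcal{Q}_n$ can actually be reduced, via the preservation lemmas, to one of the three standard configurations in Lemmas~\ref{lemm6}--\ref{lemm8}. The rigid structural constraints of $\mathcal{Q}_n$ (at most four neighbours per vertex, triangles meeting only at a single vertex, etc.) ensure that only finitely many local pictures occur, and the five preservation lemmas have been formulated precisely so that, together, they cover all these cases; the technical heart of the proof lies in matching each configuration against the appropriate lemma.
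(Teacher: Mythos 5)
Your outer induction on the number of oriented $3$-cycles matches the paper's, and you correctly identify that Lemmas~\ref{lemm3} and~\ref{lemm5} normalise the legs while Lemmas~\ref{lemm6}--\ref{lemm8} transport the property across the mutation that opens a triangle. However, there is a genuine gap at the heart of your inductive step: the claim that one can always choose a vertex $v$ of the $3$-cycle $C$ such that $\mu_v$ opens $C$ \emph{without creating a new cycle elsewhere}, so that $c(\mu_v(Q))=c(Q)-1$. This is false. If a vertex $v$ of $C$ has a further neighbour $w$ outside $C$ (the first vertex of a leg), then mutating at $v$ opens $C$ but simultaneously creates a new oriented $3$-cycle on $v$, $w$ and one of the other two vertices of $C$; the count of $3$-cycles does not drop. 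A concrete counterexample to your claim is a single oriented $3$-cycle with a pendant edge attached at each of its three vertices (this quiver satisfies all the conditions of $\mathcal{Q}_6$): every vertex of the triangle has an outside neighbour, so no single mutation at a triangle vertex reduces the number of $3$-cycles, and mutating at a leg vertex does not touch the triangle at all. The ``rigid structural constraints of $\mathcal{Q}_n$'' do not rescue you here.

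This is exactly why the paper runs a \emph{double} induction. After choosing (via \cite[Lemma 4.3]{BV}) a $3$-cycle attached to at most one other $3$-cycle, with legs of lengths $s$ and $r$ at the vertices $i$ and $k$, the paper performs an inner induction on $s+r$. When $s+r=0$ or $s=0$ the mutation at $i$ does decrease the number of $3$-cycles and Lemma~\ref{lemm6} or~\ref{lemm7} applies, as in your sketch; but when $s\ge 1$ the mutation at $i$ keeps the number of $3$-cycles equal to $m$ and instead shortens the legs to $s'+r'=s+r-1$, and it is Lemma~\ref{lemm8} --- whose hypothesis quiver has a triangle adjacent to the $2$-path, not merely a bare $2$-path --- that closes this case. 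Your proposal treats Lemmas~\ref{lemm6}--\ref{lemm8} as three variants of the same ``count goes down by one'' step, which misses that Lemma~\ref{lemm8} is the engine of a second induction on leg length. Without introducing that second induction parameter (or an equivalent measure), your argument does not terminate on quivers whose triangles all carry legs.
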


\begin{proof} We use induction on the number of $3$-cycles in $Q$. If $Q$ is acyclic, then the
result follows by Theorem \ref{thm1}. Assume that $Q$ has $m$ $3$-cycles and that
the result follows for any quiver $Q'\in \mathcal{Q}_n$ with $t<m$ $3$-cycles.
It is known that there exists at least one $3$-cycle in $Q$ which is connected to at most one other
3-cycle, \cite[Lemma 4.3]{BV}.
Let this 3-cycle be $i\to j\to k\to i$, as drawn here.
$$
\includegraphics[width=10cm]{pic7-kb}
$$
We also assume that for this $3$-cycle in $Q$, the sum $s+r$ of lengths of the legs at vertices
$i$ and $k$ is minimal.
By Lemmas \ref{lemm3} and \ref{lemm5} we can assume that $Q$ has a following orientation
(after suitable mutations)
$$
\includegraphics[width=10cm]{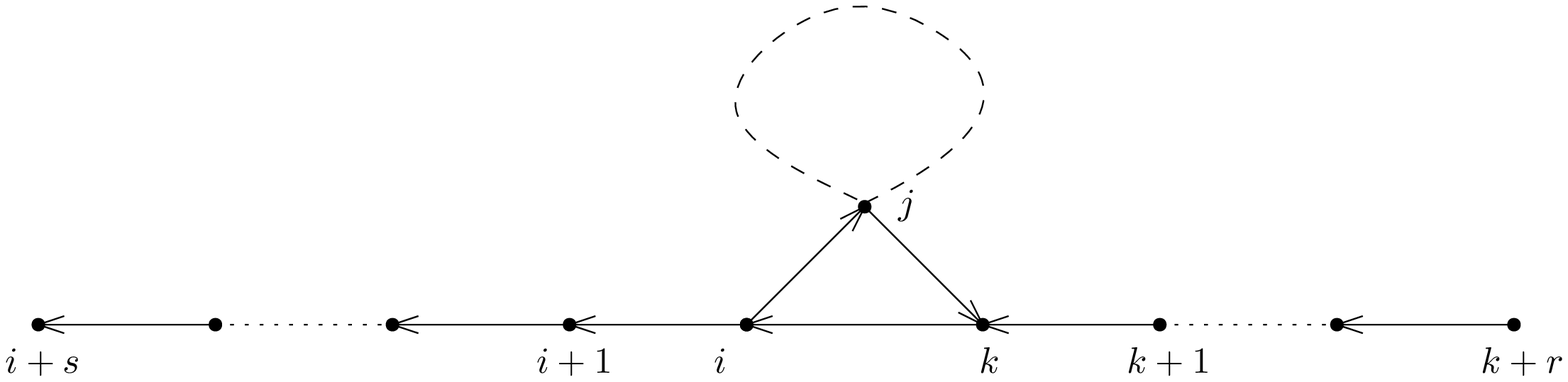}
$$
We will use induction on the number $r+s$ to show $\mathcal{L}_P(\Sigma)=\mathcal{A}(\Sigma)$.
Assume that $s+r=0$. We consider mutation at $i$: the quiver $\mu_i(Q)$ has $m-1$
3-cycles and by the induction assumption,
$\mathcal{L}_P(\Sigma')=\mathcal{A}(\Sigma')$, for $\Sigma'=(\mathbf{x}, \mu_i(Q))$.
By Lemma $\ref{lemm6}$, $\mathcal{L}_P(\Sigma)=\mathcal{A}(\Sigma)$ as we wanted.

Now assume that for any $Q\in \mathcal{Q}_n$ with $m$, $3$-cycles and $s'+r'<s+r$ the claim
is proved.
If $s=0$, then $\mu_i(Q)$ has $m-1$, $3$-cycles and by the induction assumption
$\mathcal{L}_P(\Sigma')=\mathcal{A}(\Sigma')$, where $\Sigma'=(\mathbf{x}, \mu_i(Q))$.
Then by Lemma $\ref{lemm7}$, $\mathcal{L}_P(\Sigma)=\mathcal{A}(\Sigma)$ as we wanted.
If $s\neq 0$, the mutated quiver $\mu_i(Q)$ still has $m$, $3$-cycles but the length
$s'+r'=s+r-1$ has decreased and by induction assumption
$\mathcal{L}_P(\Sigma')=\mathcal{A}(\Sigma')$, for $\Sigma'=(\mathbf{x}, \mu_i(Q))$.
Then by Lemma $\ref{lemm8}$, $\mathcal{L}_P(\Sigma)=\mathcal{A}(\Sigma)$ and the result follows.
\end{proof}

%%%%%%%%%%%%%%%%%%%%%%%%%%%%%%%%%%%%%%
\section{Cluster algebras of type $\widetilde{A}_n$}\label{sec 5}

In this section we show that in type $\widetilde{A}_n$, the lower bound cluster algebras generated
by projectives and the cluster algebra coincide (Theorem~\ref{thm4}).
For this, we first recall the characterisation of Bastian~\cite{B} of
the mutation-equivalence class of quivers in type $\widetilde{A}_n$. These quivers all have one
non-oriented cycle to which a number of quivers of mutation type $A$ are attached along 3-cycles.
We will prove the theorem
by reducing the number of 3-cycles. For this, we will first analyse a few key examples.

\begin{definition}\label{def1}(\cite[Definition 3.3]{B})
Let $\mathscr{Q}_n$ be the class of connected quivers with $n+1$ vertices that
satisfy the following conditions:
\begin{itemize}
\item[(i)] There exists precisely one full subquiver that is a non-oriented cycle of length greater than or equal $2$. Thus, if the length is two, it is a double arrow.
\item[(ii)] For each arrow $\xymatrix{{x}\ar[r]^{\alpha}&{y}}$ in this non-oriented cycle, there may (or may not) be
a vertex $z_\alpha$ not on the non-oriented cycle and such that there is an oriented
$3$-cycle of the form $$\hskip.5cm \xymatrix{
&{z_{\alpha}} \ar[dl]&\\
{x}\ar[rr]_{\alpha}&&{y}\ar[ul]
}
\hskip.5cm$$
Apart from the arrows of these oriented 3-cycles there are no other arrows
incident to vertices on the non-oriented cycle.
\item[(iii)] If we remove all vertices in the non-oriented cycle and their incident arrows,
the result is a (possibly empty) disjoint union of quivers $Q_1, Q_2, \cdots$, one for each
$z_{\alpha}$. We call these $Q_{\alpha}$.
Each $Q_{\alpha}$ is of mutation type $A_{k_{\alpha}}$, with
$k_{\alpha}\ge 1$, and there are at most two arrows incident with $z_{\alpha}$ in $Q_{\alpha}$.
Furthermore, if $z_{\alpha}$ is incident with two arrows in $Q_{\alpha}$, then it is a vertex
in an oriented 3-cycle in $Q_{\alpha}$.
\end{itemize}

\end{definition}

If the non-oriented cycle consists of two arrows, then our convention is that any 3-cycle is only attached
to one of the pair of arrows (see Lemma~\ref{lemm11} for an illustration).

\begin{lemma}\label{lemm9}(\cite[Lemma 3.5 and Remark 3.6]{B}) A quiver $Q$ is mutation
equivalent to $\widetilde{A}_n$ if and only if $Q\in  \mathscr{Q}_n$.
\end{lemma}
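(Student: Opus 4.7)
My plan is to split the biconditional into its two directions and treat the harder direction via closure under mutation. The strategy is analogous to how Lemma~\ref{lemm4} (the type $A_n$ characterisation via $\mathcal{Q}_n$) works: one verifies that $\mathscr{Q}_n$ contains at least one quiver mutation equivalent to $\widetilde{A}_n$, and that it is preserved by every single mutation. Together these two facts yield the "only if" direction. The "if" direction is then obtained by producing, for every $Q\in\mathscr{Q}_n$, an explicit sequence of mutations reducing $Q$ to a non-oriented cycle of length $n+1$, which is a standard representative of the $\widetilde{A}_n$ mutation class.

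For the forward direction (from $\widetilde{A}_n$ to $\mathscr{Q}_n$), the first step is the base case: the non-oriented cycle $\widetilde{A}_n$ itself trivially lies in $\mathscr{Q}_n$ since the defining conditions (i)-(iii) are vacuously satisfied with no legs attached. The second step is to verify closure of $\mathscr{Q}_n$ under mutation at an arbitrary vertex $v$. I would organise this as a case analysis according to where $v$ lives in $Q$:
\begin{itemize}
\item[(a)] $v$ lies in the interior of some $Q_\alpha$ (away from $z_\alpha$);
\item[(b)] $v=z_\alpha$ for some arrow $\alpha$ of the non-oriented cycle;
\item[(c)] $v$ lies on the non-oriented cycle.
\end{itemize}
Case (a) reduces to Lemma~\ref{lemm4}: mutation inside $Q_\alpha$ produces a quiver in $\mathcal{Q}_{k_\alpha}$, and the interaction with $z_\alpha$ is controlled by condition (iii) (either $z_\alpha$ has one neighbour in $Q_\alpha$ or it is in a $3$-cycle of $Q_\alpha$). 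In case (b), mutation at $z_\alpha$ either destroys the $3$-cycle attached to $\alpha$ or, combined with the rest of $Q_\alpha$, creates a new $3$-cycle attached to $\alpha$; a local check (distinguishing the subcases of how $z_\alpha$ sits inside $Q_\alpha$) shows the outcome is again in $\mathscr{Q}_n$. Case (c) is the subtlest: mutating at a vertex $x$ on the cycle flips two adjacent arrows and creates arrows to any adjacent $z_\alpha$'s; one must check that $3$-cycles get rearranged so as to remain attached to the cycle in the way required by (ii), and that the new quiver still has exactly one non-oriented full-subquiver cycle.

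For the converse (from $\mathscr{Q}_n$ to $\widetilde{A}_n$), given $Q\in\mathscr{Q}_n$ I would mutate all the legs $Q_\alpha$ to acyclic quivers of type $A_{k_\alpha}$ using Lemma~\ref{lemm4}, then successively mutate at the now-source vertex $z_\alpha$ together with suitable vertices in $Q_\alpha$ to detach each leg from the non-oriented cycle. After finitely many such reductions only the non-oriented cycle of length $n+1$ remains, which is the standard $\widetilde{A}_n$ quiver.

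The main obstacle is the mutation-closure verification in case (c): a vertex on the non-oriented cycle can have up to four incident arrows (two on the cycle plus up to two towards attached $z_\alpha$ vertices), and after mutation one has to carefully identify the new non-oriented cycle and check that the resulting $3$-cycles are still of the form prescribed by (ii). The tedious but essential bookkeeping is keeping track of whether the two cycle-arrows at $x$ are oriented consistently or oppositely, and whether both adjacent arrows carry attached $z_\alpha$'s; enumerating these subcases and drawing the resulting quivers is where most of the proof's work lies.
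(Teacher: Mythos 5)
This statement is not proved in the paper at all: it is imported verbatim from Bastian (\cite[Lemma 3.5 and Remark 3.6]{B}), so there is no in-paper argument to compare against. Your overall strategy --- show $\mathscr{Q}_n$ contains a standard $\widetilde{A}_n$ representative and is closed under mutation, then exhibit for each $Q\in\mathscr{Q}_n$ a mutation sequence back to the non-oriented cycle --- is exactly the shape of Bastian's proof, so the plan is sound. But as written it has two genuine problems. First, the entire content of the lemma lives in the mutation-closure verification (your cases (b) and (c)), and you defer it wholesale as ``a local check'' and ``tedious but essential bookkeeping.'' In particular, your description of case (b) misses the key phenomenon: mutating at $z_\alpha$ does not merely ``destroy or create a $3$-cycle attached to $\alpha$''; it opens the oriented $3$-cycle and inserts $z_\alpha$ into the non-oriented cycle, so the unique non-oriented full-subquiver cycle \emph{changes length}. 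Verifying that condition (i) (uniqueness of the non-oriented cycle) survives this, and survives mutations at cycle vertices of valency up to four, including the degenerate length-$2$ (double-arrow) case with the paper's attachment convention, is the whole proof; without it the argument is a restatement of the claim.

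Second, your converse direction is wrong as stated. You propose to ``detach each leg from the non-oriented cycle,'' but quiver mutation preserves connectivity (arrows incident to the mutated vertex are only reversed, and any cancelled arrow joins two vertices that remain connected through the mutated vertex), so a connected quiver in $\mathscr{Q}_n$ can never be mutated into a cycle with a disjoint leg. The correct mechanism --- and the one the present paper itself exploits in the proof of Theorem~\ref{thm4} --- is \emph{absorption}: mutating at $z_\alpha$ lengthens the non-oriented cycle by one and shortens the attached type-$A$ piece, and iterating this swallows every $Q_\alpha$ into the cycle until only the non-oriented $(n+1)$-cycle remains. The fix is easy, but the step as you describe it would fail.
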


Note that in particular, the cluster algebras of type $\widetilde{A}_n$ are acyclic, as
the set $\mathscr{Q}_n$ contains quivers which are non-oriented cycles with $n+1$ vertices.

\begin{lemma}\label{lemm11}
For $Q$ the quiver

\begin{center}
$
 \xymatrix{
&{1} \ar[dl]&\\
{2}\ar@/^-1pc/[rr]\ar@/^1pc/[rr]&&{3}\ar[ul]
}
$
\end{center}
and $\Sigma=(\mathbf{x}, Q)$ we have
$\mathcal{L}_P(\Sigma)=\mathcal{A}(\Sigma)$.
\end{lemma}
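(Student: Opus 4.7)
The plan is to reduce to the acyclic case via a single mutation at vertex $1$. First I would verify by direct quiver mutation that $\mu_1(Q)$ is the acyclic quiver $Q^a$ with arrows $2\to 1$, $1\to 3$, $2\to 3$ (the double arrow cancels with a new arrow produced by the path $3\to 1\to 2$ after reversal at $1$). Thus $\Sigma^a := \mu_1(\Sigma) = (\mathbf{y}, Q^a)$ is an acyclic seed with cluster $\mathbf{y} = \{y_1, y_2, y_3\}$ given by $y_1 = (x_2+x_3)/x_1$, $y_2 = x_2$, $y_3 = x_3$, and by Theorem~\ref{thm1} we have $\mathcal{L}_P(\Sigma^a) = \mathcal{A}(\Sigma^a) = \mathcal{A}(\Sigma)$.

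Next I would apply Theorem~\ref{thm2} to the acyclic seed $\Sigma^a$ mutating at $j=1$. Since $\mu_1(y_1) = x_1$, the mutated cluster is exactly $\mathbf{x} = \{x_1, x_2, x_3\}$ and Theorem~\ref{thm2} gives that $\mathcal{A}(\Sigma) = \mathcal{A}(\mathbf{y}, \mu_1(Q^a))$ is generated, as a $\mathbb{Z}$-algebra, by $x_1, x_2, x_3$ together with $g(x_{P_1^a}), x_{P_2}, x_{P_3}$, where $x_{P_2}, x_{P_3}$ are projective cluster variables of $\Sigma$, $x_{P_1^a}$ is the projective cluster variable at vertex $1$ of the acyclic seed $\Sigma^a$, and $g$ is the cluster-algebra isomorphism from Remark~\ref{rem:induced-isom}. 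Since $x_1, x_2, x_3, x_{P_2}, x_{P_3} \in \mathcal{L}_P(\Sigma)$ by definition, the equality $\mathcal{L}_P(\Sigma) = \mathcal{A}(\Sigma)$ reduces to checking the single inclusion $g(x_{P_1^a}) \in \mathcal{L}_P(\Sigma)$.

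To verify this inclusion, I would compute $x_{P_1^a}$ by performing the appropriate mutation sequence in $\Sigma^a$ (using Definition~\ref{def:proj-cl-var}), and then express $g(x_{P_1^a})$ by tracking the cluster variable under the renaming $y_j \mapsto \mu_1(y_j)$, giving an explicit Laurent polynomial in $x_1, x_2, x_3$. In parallel I would compute $x_{P_1}, x_{P_2}, x_{P_3}$ for $\Sigma$ using Remark~\ref{rem:projective variables}, i.e.\ as Caldero-Chapoton images of the summands of the cluster-tilting object $T' \in \mathcal{C}_{kQ^a}$ corresponding to $\Sigma$ (obtained by mutating the summand $P_1$ of $H = kQ^a$ via the exchange triangle $P_1 \to P_2 \to M \to P_1[1]$ in $\mathcal{C}_{kQ^a}$). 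Once explicit expressions are in hand, the final step is to exhibit a polynomial identity in $\mathcal{L}_P(\Sigma)$ of the type appearing in Lemmas~\ref{lemm6}--\ref{lemm8}, for instance of the form $x_j\cdot x_{P_j} = $ (linear combination involving $g(x_{P_1^a})$ and monomials in the generators) for a well-chosen $j$, from which $g(x_{P_1^a}) \in \mathcal{L}_P(\Sigma)$ follows.

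The hard part will be the Laurent polynomial bookkeeping: the double arrow $2\rightrightarrows 3$ introduces quadratic factors in the exchange polynomials at vertex $1$ of $Q$ and at the vertices on the acyclic side, so the resulting expressions for $x_{P_i}$ and $g(x_{P_1^a})$ are more intricate than in the simply-laced type-$A$ calculations of Section~\ref{sec 5}. Nevertheless, the structural analogy with the earlier lemmas strongly suggests that a clean identity will emerge, and once it is written down the inclusion, and thus the lemma, follows immediately.
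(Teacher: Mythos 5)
Your proposal follows essentially the same route as the paper: mutate at vertex $1$ to reach the acyclic quiver, invoke Theorem~\ref{thm1} there, transfer back via Theorem~\ref{thm2}, and then verify by explicit computation that the remaining generator(s) lie in $\mathcal{L}_P(\Sigma)$. The deferred computation does work out cleanly (the paper finds $g(x_{P_1})=\tfrac{x_1+x_2^2}{x_3}=x_1x_{P'_3}-x_2$ and $\tfrac{x_2+x_3}{x_1}=x_2x_{P'_1}-x_{P'_3}$), the only cosmetic difference being that the paper feeds the seed $(\mathbf{x},\mu_1(Q))$ rather than the genuinely mutated seed into Theorem~\ref{thm2} and therefore checks the two elements $x_1'$ and $g(x_{P_1})$ instead of one.
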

\begin{proof}
First note that $\mu_1(Q)$ is acyclic and so
$\mathcal{A}(\mathbf{x}, \mu_1(Q))=\mathcal{L}_P(\mathbf{x}, \mu_1(Q))$ by Theorem~\ref{thm1}.

Let $g:\mathcal{A}(\mathbf{x}, \mu_1(Q))\rightarrow \mathcal{A}(\mathbf{x'}, \mu_1(Q))$ be the
isomorphism of cluster algebras of Remark~\ref{rem:induced-isom} for
$\mathbf{x'}=\mathbf{x}\cup \{\mu_1(x_1)\}\backslash \{x_1\}=\{x_1',x_2,x_3\}$.
Then by Theorem~\ref{thm2}, we have that
$\mathcal{A}(\mathbf{x},\mu_1^2(Q))=\mathcal{A}(\mathbf{x},Q)$ is generated by the variables
$x_1',x_2,x_3$, $g(x_{P_1}), x_{P_2'},x_{P_3'}$.
We thus have to show that the cluster
variables $g(x_1)=x_1'$ and $g(x_{P_1})$ are generated by $x_1,x_2,x_3$, $x_{P_1'},x_{P_2'}, x_{P_3'}$.
Now $g(x_1)=\dfrac{x_2+x_3}{x_1}$ and  $g(x_{P_1})=\dfrac{x_1+x_2^2}{x_3}$;
$x_1x_{P'_3}=\dfrac{x_1+x_2^2}{x_3}+x_2$ and
$x_{P'_1}=\dfrac{x_1+(x_3+x_2)^2}{x_1x_2x_3}$.
Then $\dfrac{x_3+x_2}{x_1}=x_2x_{P'_1}-x_{P'_3}$ and the result follows.
\end{proof}

It follows from Lemma~\ref{lemm11} that for the quivers in $\mathscr{Q}_2$, the lower bound
cluster algebra generated by projectives is equal to the cluster algebra: up to isomorphism,
the only quivers in $\mathscr{Q}_2$ are a non-oriented 3-cycle and the quiver with a double
arrow and one oriented 3-cycle from Lemma~\ref{lemm11}.

\begin{lemma}\label{lemm10}
Let $Q\in \mathscr{Q}_n$, let $i$ be a vertex of the non-oriented cycle which is not incident with any
oriented 3-cycles, not a sink nor a source.
Let $\Sigma=(\mathbf{x}, Q)$ and $\Sigma'=(\mathbf{x}, \mu_i(Q))$.
Then we have
$\mathcal{L}_P(\Sigma)=\mathcal{A}(\Sigma)$ $\Longrightarrow$
$\mathcal{L}_P(\Sigma')=\mathcal{A}(\Sigma')$.
\end{lemma}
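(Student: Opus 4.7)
The plan is to mimic the strategy used in Lemmas~\ref{lemm3} and \ref{lemm5}: reduce, via Theorem~\ref{thm2}, to showing that two specific cluster variables lie in $\mathcal{L}_P(\Sigma')$, then exhibit explicit identities matching them against products of the $x_j$'s and the $x_{P_j'}$'s. Concretely, let $\mathbf{x}'=\mathbf{x}\cup\{\mu_i(x_i)\}\setminus\{x_i\}$ and let $g\colon \mathcal{A}(\mathbf{x},Q)\to \mathcal{A}(\mathbf{x}',Q)$ be the isomorphism from Remark~\ref{rem:induced-isom}. By Theorem~\ref{thm2}, $\mathcal{A}(\Sigma')=\mathcal{A}(\mathbf{x},\mu_i(Q))$ is generated by $x_1,\dots,x_{i-1},x_i',x_{i+1},\dots,x_n$ together with $x_{P_1'},\dots,x_{P_{i-1}'},g(x_{P_i}),x_{P_{i+1}'},\dots,x_{P_n'}$. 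Hence it is enough to prove that $x_i'$ and $g(x_{P_i})$ already belong to $\mathcal{L}_P(\Sigma')=\mathbb{Z}[x_1,x_{P_1'},\dots,x_n,x_{P_n'}]$.

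Since $i$ sits on the non-oriented cycle and is neither a sink nor a source and is not incident to any attached $3$-cycle, after possibly interchanging the two neighbours we may assume the arrows at $i$ in $Q$ are $i-1\to i\to i+1$. Then $x_i'=\dfrac{x_{i-1}+x_{i+1}}{x_i}$, and the mutation $\mu_i$ replaces these two arrows by $i\to i-1$ and $i+1\to i$ while creating a new arrow $i-1\to i+1$; so $\mu_i(Q)$ contains an oriented $3$-cycle $i-1\to i+1\to i\to i-1$ attached to a shortened non-oriented cycle, still lying in $\mathscr{Q}_n$ by Lemma~\ref{lemm9}. Because $i$ is not in a $3$-cycle of $Q$, an acyclic seed for $\mathcal{A}(\Sigma)$ can be chosen in which the only mutations used to reach the tilting object defining $x_{P_i}$ factor through $\mu_i$; unravelling this (as in Example~\ref{ex2}) gives a short closed form for $g(x_{P_i})$ of the shape $g(x_{P_i})=\dfrac{x_i+x_{i-1}\,x_{?}}{x_{i+1}}$ (or its mirror), where $x_?$ is the cluster variable sitting at the next neighbour along the cycle.

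The main computational step is then to apply Lemma~\ref{lemm2} in $\Sigma'$ to express $x_{P_i'}$, $x_{P_{i-1}'}$ and $x_{P_{i+1}'}$. In $\mu_i(Q)$ the only arrow into $i$ is $i+1\to i$, and in the cluster-tilted algebra the potential of the new oriented $3$-cycle forces $\gamma\alpha=\alpha\beta=\beta\gamma=0$, which makes $\operatorname{rad}(P_i')$ essentially the pullback of $P_{i-1}'$ along the arrow $i\to i-1$ after killing the composition back into $P_{i+1}'$. Thus Lemma~\ref{lemm2} gives an identity of the form
\[
x_i\,x_{P_i'}\;=\;1+x_{i+1}\cdot \bigl(\text{expression in }x_{i-1},x_{P_{i-1}'}\text{ and neighbours}\bigr),
\]
and analogous formulas for $x_{P_{i\pm 1}'}$. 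Combining these with $x_i'=(x_{i-1}+x_{i+1})/x_i$ and with the explicit formula for $g(x_{P_i})$ yields relations of the type $x_{i+1}\,x_{P_i'}=x_i'+(\cdots)$ and $x_{i-1}\,x_{P_{i+1}'}=g(x_{P_i})+(\cdots)$, where $(\cdots)$ is a polynomial in the $x_j$'s and $x_{P_j'}$'s; these solve for $x_i'$ and $g(x_{P_i})$ in $\mathcal{L}_P(\Sigma')$.

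The main obstacle, exactly as in Lemmas~\ref{lemm3} and~\ref{lemm5}, is the case analysis needed around the neighbours $i-1$ and $i+1$: each of them can carry, or not carry, an attached oriented $3$-cycle in $Q$, and can play different roles (sink, source, or middle vertex) in the non-oriented cycle. Each configuration changes the formulas for $x_{P_{i-1}'}$, $x_{P_{i+1}'}$ and for the relevant $x_{\operatorname{rad}(P_{?}')}$ entering Lemma~\ref{lemm2}. One handles the cases in parallel to the proof of Lemma~\ref{lemm5}: for each possible local picture at $i-1$ and $i+1$, one writes down the closed form given by Lemma~\ref{lemm2} and verifies, by direct substitution, the two key identities expressing $x_i'$ and $g(x_{P_i})$ as polynomials in the $x_j$'s and $x_{P_j'}$'s. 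Once all these bookkeeping cases are dispatched, both cluster variables lie in $\mathcal{L}_P(\Sigma')$, giving $\mathcal{L}_P(\Sigma')=\mathcal{A}(\Sigma')$.
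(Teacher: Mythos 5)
Your proposal follows essentially the same route as the paper: reduce via Theorem~\ref{thm2} to showing $x_i'$ and $g(x_{P_i})$ lie in $\mathcal{L}_P(\Sigma')$, compute $x_i'=(x_j+x_k)/x_i$ and a closed form for $g(x_{P_i})$, then use Lemma~\ref{lemm2} to write $x_{P'_i}$ and $x_{P'_j}$ and solve the resulting identities, deferring the leftover terms to the inductive argument of Lemma~\ref{lemm5}. The only (harmless) imprecision is that the extra factor in $g(x_{P_i})$ is not a single neighbouring cluster variable but the character $x_M$ of a summand $M$ of $\rad(P'_j)$ --- exactly the kind of term your final paragraph, like the paper, disposes of by the Lemma~\ref{lemm5} argument.
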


\begin{proof}
We can assume that $Q$ looks as follows, with arbitrary orientation of the
arrows around the non-oriented cycle (apart from the arrows incident with $i$).
\[
\includegraphics[width=9cm]{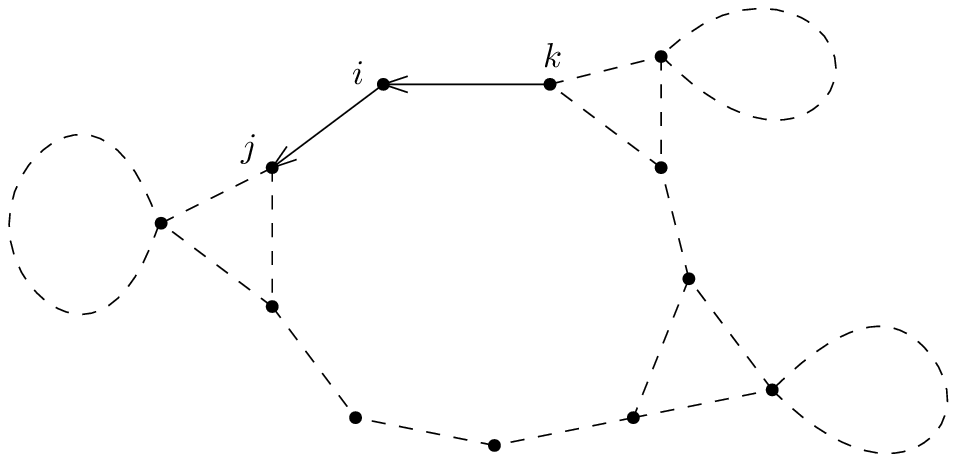}
\]
If the arrows incident with $i$ have the orientation $j\longrightarrow i\longrightarrow k$, the proof is similar.
Assume that $\mathcal{L}_P(\Sigma)=\mathcal{A}(\Sigma)$.
Let $\mathbf{x'}=\mathbf{x}\cup \{\mu_i(x_i)\}\backslash \{x_i\}$ and let
$g:\mathcal{A}(\mathbf{x}, Q)\rightarrow \mathcal{A}(\mathbf{x'}, Q)$ be the
isomorphism of cluster algebras from Remark~\ref{rem:induced-isom}.
Then we know from Theorem~\ref{thm2} that $\mathcal{A}(\mathbf{x}',\mu_i(Q))$ is
generated by the variables in $\mathbf{x}'$ and by the variables
$\{x_{P_j'}\}_j \setminus x_{P_i'}\cup g(x_{P_i})$.

To prove that $\mathcal{L}_P(\Sigma')=\mathcal{A}(\Sigma')$, we have to show that
the cluster variables $g(x_i)=x_i'$ and $g(x_{P_i})$ are generated by
$x_1, x_2, \cdots, x_{n+1}, x_{P'_1}, x_{P'_2}, \cdots, x_{P'_{n+1}}$.
We have $x_i'=\dfrac{x_j+x_k}{x_i}$ and an easy calculation shows that
$g(x_{P_i})=\dfrac{x_i+x_kx_{M}}{x_j}$, where $\rad(P'_j)=S_i\oplus M$.
Now
$x_{P'_i}=\dfrac{x_k+x_ix_{H}+x_j}{x_kx_i}$ and so $x_kx_{P'_i}=\dfrac{x_j+x_k}{x_i}+x_{H}$, where $\rad(P'_k)=N\oplus H$ that $rad(N)=M$.
We have
$x_{P'_j}=\dfrac{x_i+x_{M}x_k+x_{M}x_j}{x_ix_j}$ and hence
$x_ix_{P'_j}=\dfrac{x_i+x_kx_{M}}{x_j}+x_{M}$. The same argument as in the first part of the proof of
Lemma \ref{lemm5} shows that $x_{M}$ and $x_H$ are
generated by $x_1, x_2, \cdots, x_{n+1}, x_{P'_1}, x_{P'_2}, \cdots, x_{P'_{n+1}}$ and the result follows.
\end{proof}

We consider what happens if we mutate at a 3-valent
vertex $i$ on the non-oriented cycle. We distinguish three cases: the subquiver
for the oriented 3-cycle going through $i$ consists only of this oriented $3$-cycle (Lemma~\ref{lemm18}),
the arrow on the subquiver
for the oriented 3-cycle going through $i$ is pointing away from the 3-cycle (Lemma~\ref{lemm12})
or is pointing towards the 3-cycle (Lemma~\ref{lemm13}).

\begin{lemma}\label{lemm18}
Let $Q\in \mathscr{Q}_n$ be one of the following types of quivers.
$$\includegraphics[width=7cm]{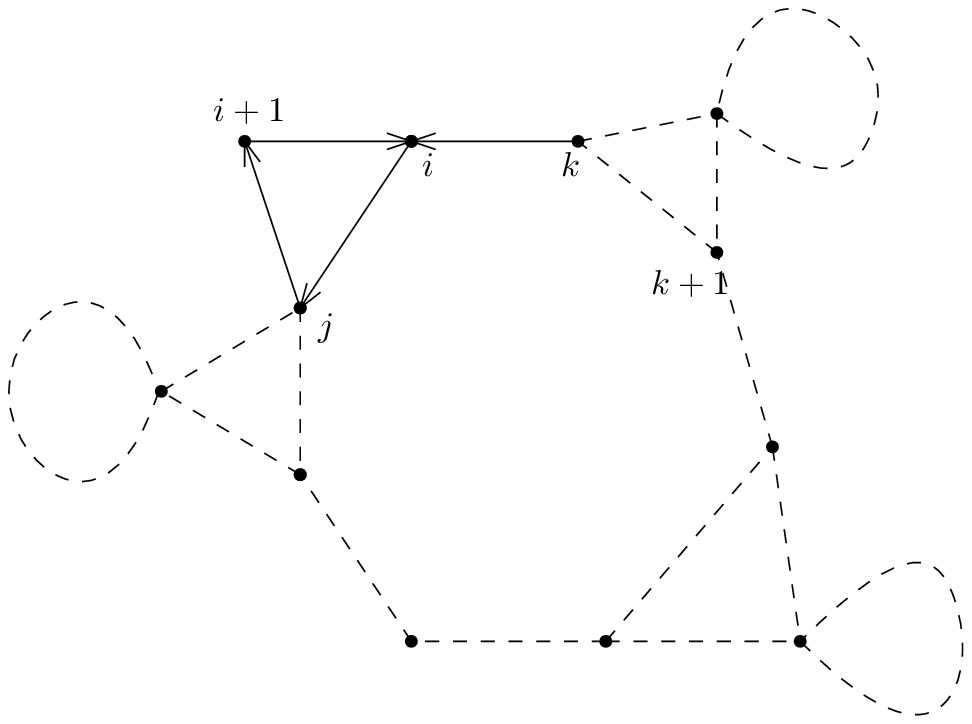}   \hspace{25pt} \includegraphics[width=7cm]{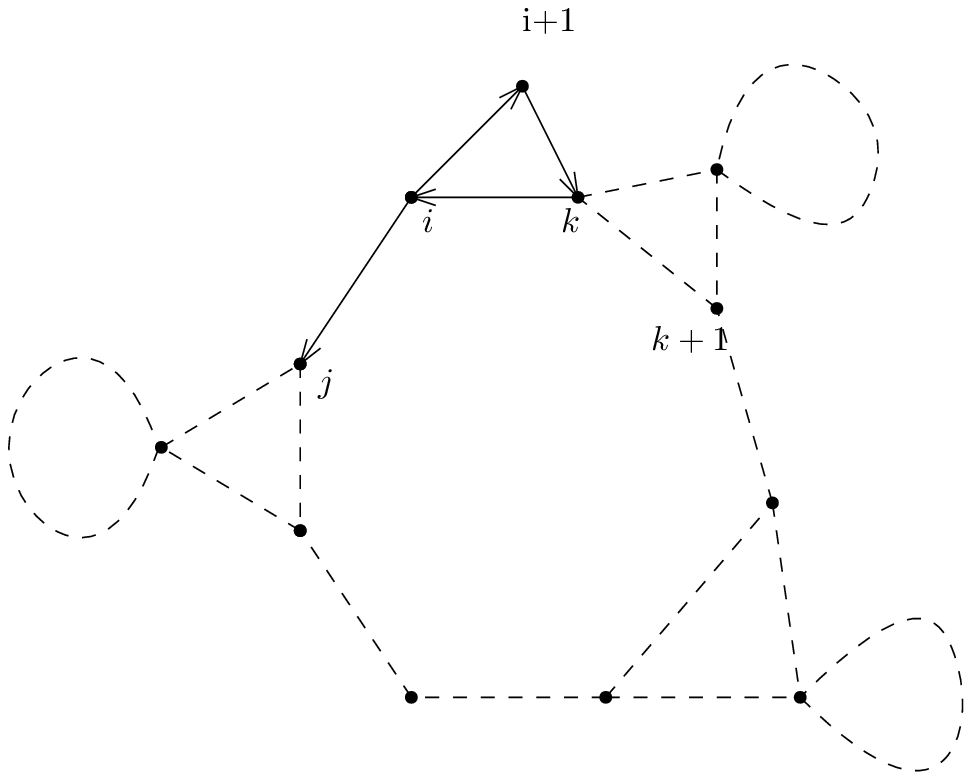}$$
Let $\Sigma=(\mathbf{x}, Q)$ and $\Sigma'=(\mathbf{x}, \mu_i(Q))$. Then we have:
$\mathcal{L}_P(\Sigma)=\mathcal{A}(\Sigma)$ $\Longrightarrow$
$\mathcal{L}_P(\Sigma')=\mathcal{A}(\Sigma')$.
\end{lemma}

\begin{proof}
We assume that $Q$ is a quiver as on the left.
The proof for quivers as on the right is similar.
Let $\mathbf{x'}=\mathbf{x}\cup \{\mu_i(x_i)\}\backslash \{x_i\}$ and let
$g:\mathcal{A}(\mathbf{x}, Q)\rightarrow \mathcal{A}(\mathbf{x'}, Q)$ be the isomorphism from
Remark~\ref{rem:induced-isom}.
As in the proof of Lemma~\ref{lemm10}, we have to show that the cluster variables $x_i'=g(x_i)$
and $g(x_{P_i})$ are generated by the $\{x_j\}_j$ and by the $\{x_{P'_j}\}_j$.
Here, $x_i'=\dfrac{x_j+x_kx_{i+1}}{x_i}$ and one can check that
$g(x_{P_i})=\dfrac{x_i+x_kx_{M}}{x_j}$, where $\rad(P'_j)=T\oplus M$ and $T$ is a representation of the subquiver $Q_{\alpha}$ of the quiver $\mu_i(Q)$ with $z_\alpha=i$.
We have $x_{P'_i}=\dfrac{x_kx_{i+1}+(1+x_i)(x_ix_{L}+x_j)}{x_ix_{i+1}x_k}$, where $\rad(P_k)=P_i\oplus L$. Also,
$x_{P'_j}=\dfrac{x_i+x_Mx_{P'_{i+1}}x_j+x_Mx_k}{x_ix_j}$. Then
$x_{i+1}x_kx_{P'_i}=\dfrac{x_j+x_kx_{i+1}}{x_i}+x_{L}+x_ix_L+x_j$ and
$x_ix_{P'_j}=\dfrac{x_i+x_kx_{M}}{x_j}+x_{M}x_{P'_{i+1}}$.
The same argument as in the first part of the proof of Lemma \ref{lemm5} shows that
$x_{L}$ and $x_M$ are generated by
$x_1, x_2, \cdots, x_{n+1}, x_{P'_1}, x_{P'_2}, \cdots, x_{P'_{n+1}}$ and the result follows.

\end{proof}

\begin{lemma}\label{lemm12} Let $Q\in \mathscr{Q}_n$ one of the following types of quivers.
$$\includegraphics[width=7cm]{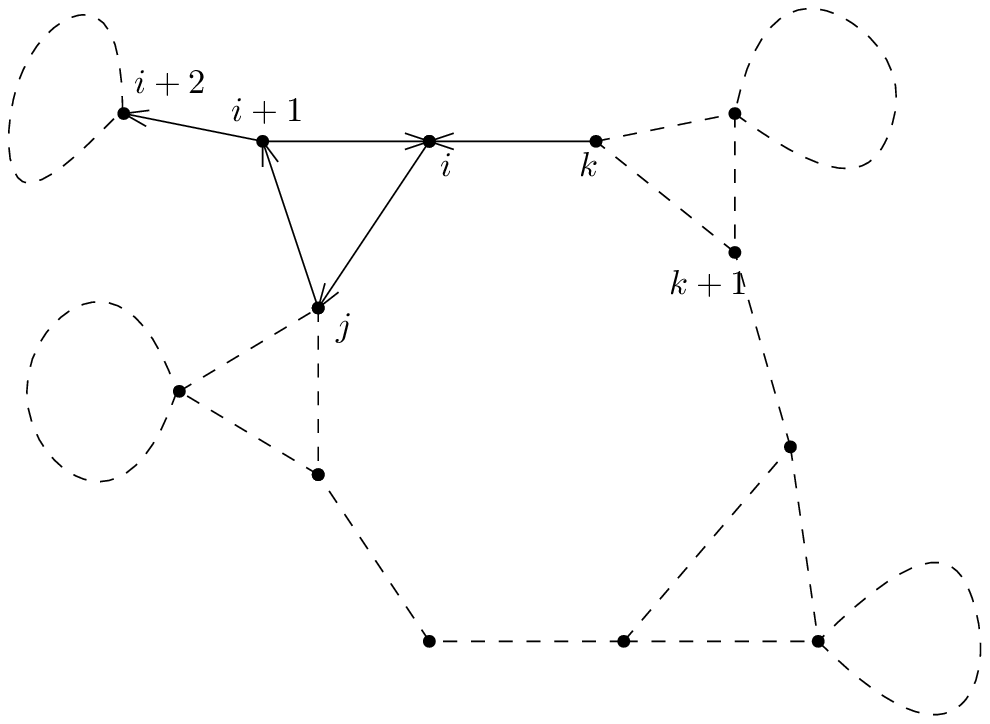}\hspace{25pt} \includegraphics[width=7cm]{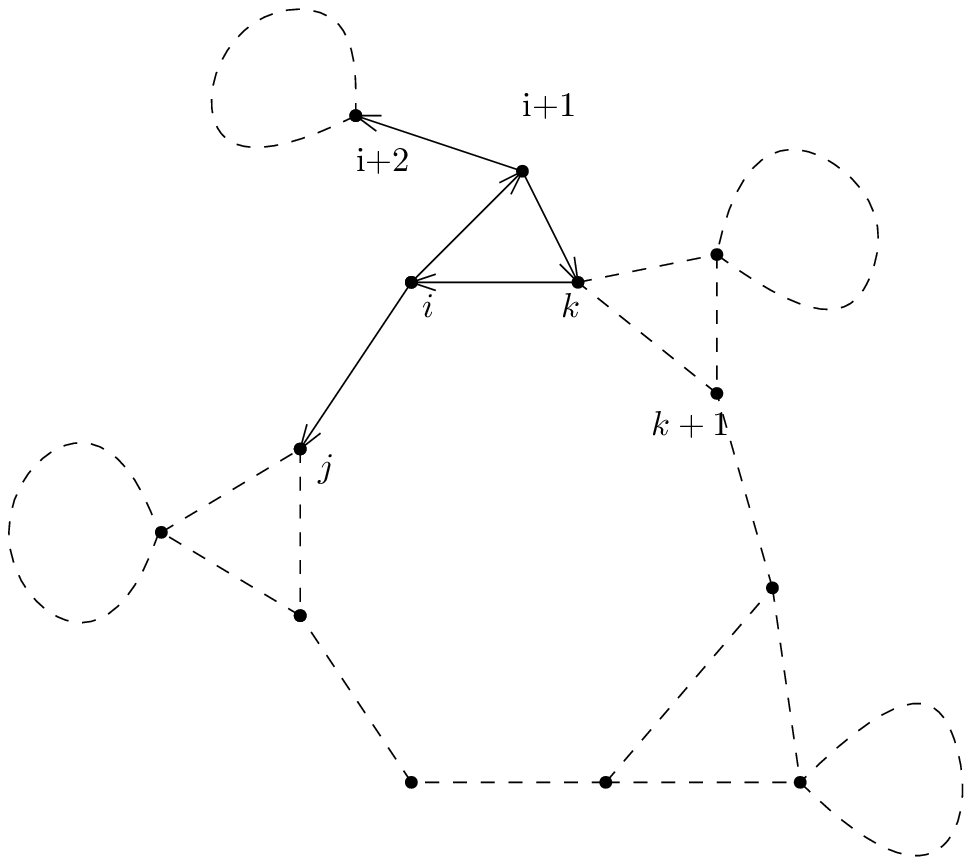}$$
Let $\Sigma=(\mathbf{x}, Q)$ and $\Sigma'=(\mathbf{x}, \mu_i(Q))$. Then we have:
$\mathcal{L}_P(\Sigma)=\mathcal{A}(\Sigma)$ $\Longrightarrow$
$\mathcal{L}_P(\Sigma')=\mathcal{A}(\Sigma')$.
\end{lemma}

\begin{proof}
We assume that $Q$ is a quiver as the one on the left, the proof for quivers as on the right is similar.
Let $\mathbf{x'}=\mathbf{x}\cup \{\mu_i(x_i)\}\backslash \{x_i\}$ and let
$g:\mathcal{A}(\mathbf{x}, Q)\rightarrow \mathcal{A}(\mathbf{x'}, Q)$ be the isomorphism from
Remark~\ref{rem:induced-isom}.

As in the previous proofs, we have to show that the cluster variables $x_i'=g(x_i)$
and $g(x_{P_i})$ are generated by the $\{x_j\}_j$ and by $\{x_{P'_j}\}_j$.
Here, $x_i'=\dfrac{x_j+x_kx_{i+1}}{x_i}$ and one can check that
$g(x_{P_i})=\dfrac{x_i+x_kx_{M}}{x_j}$, where $\rad(P'_j)=T\oplus M$ and $T$ is a representation of the subquiver $Q_{\alpha}$ of the quiver $\mu_i(Q)$ with $z_\alpha=i$.
We have $x_{P'_i}=\dfrac{x_kx_{i+1}+(1+x_Nx_i)(x_ix_{L}+x_j)}{x_ix_{i+1}x_k}$, where
$N=\rad(P'_{i+1})$ and $\rad(P_k)=P_i\oplus L$. Also,
$x_{P'_j}=\dfrac{x_i+x_Mx_{P'_{i+1}}x_j+x_Mx_k}{x_ix_j}$. Then
$x_{i+1}x_kx_{P'_i}=\dfrac{x_j+x_kx_{i+1}}{x_i}+x_{L}+x_ix_{N}x_L+x_Nx_j$ and
$x_ix_{P'_j}=\dfrac{x_i+x_kx_{M}}{x_j}+x_{M}x_{P'_{i+1}}$.
The same argument as in the first part of the proof of Lemma \ref{lemm5} shows that
$x_{L}, x_{M}$ and $x_N$ are generated by
$x_1, x_2, \cdots, x_{n+1}, x_{P'_1}, x_{P'_2}, \cdots, x_{P'_{n+1}}$ and the result follows.
\end{proof}

\begin{lemma}\label{lemm13} Let $Q\in \mathscr{Q}_n$ be one of the following types of quivers.
$$\includegraphics[width=7cm]{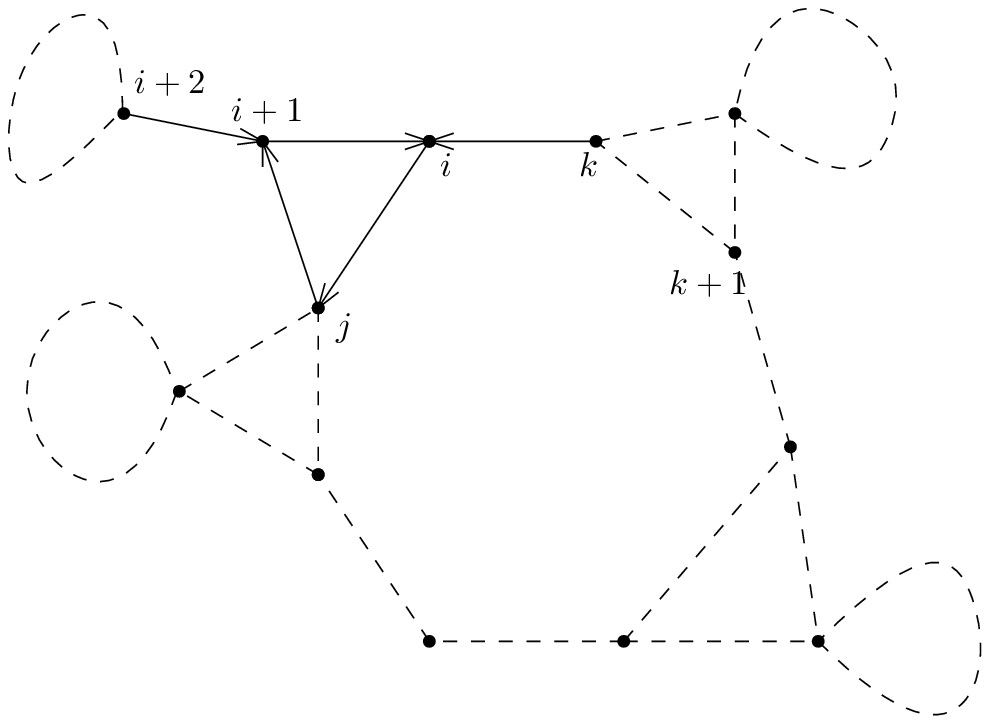} \hspace{25pt} \includegraphics[width=7cm]{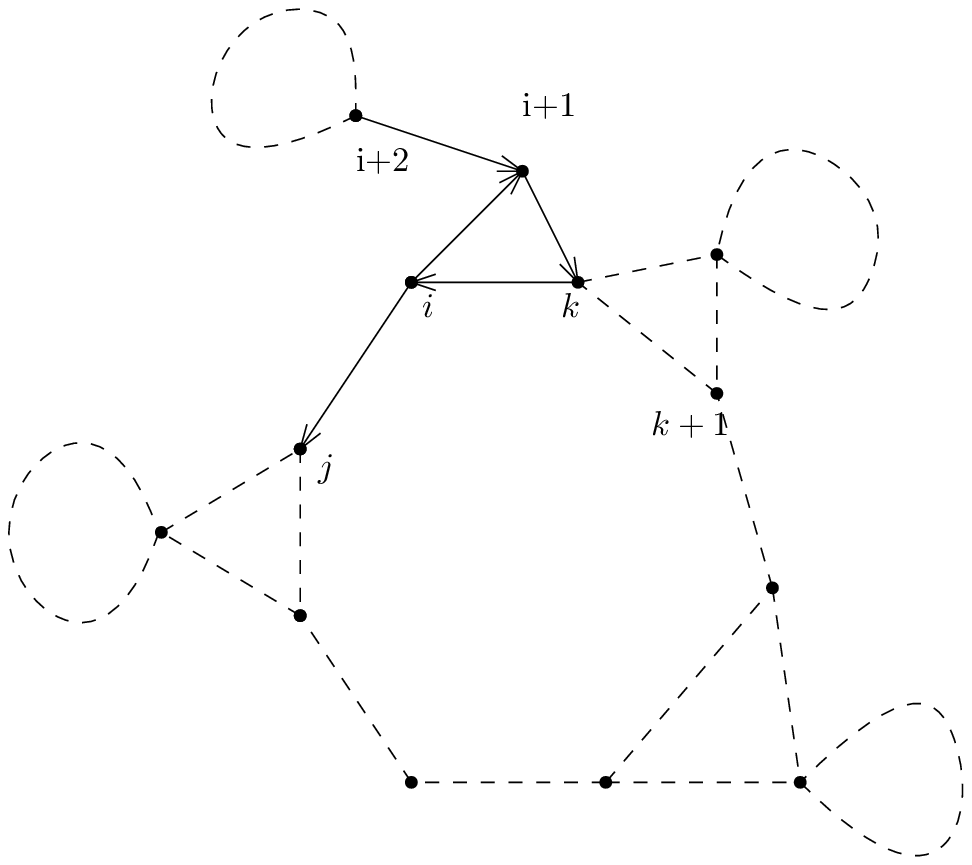} $$
Let $\Sigma=(\mathbf{x}, Q)$ and $\Sigma'=(\mathbf{x}, \mu_i(Q))$. If  $\mathcal{L}_P(\Sigma)=\mathcal{A}(\Sigma)$, then $\mathcal{L}_P(\Sigma')=\mathcal{A}(\Sigma')$.
\end{lemma}

\begin{proof} The proof is similar to the proof of Lemma \ref{lemm12}.
\end{proof}

\begin{lemma}\label{lemm14} Let $Q\in \mathscr{Q}_n$ be one of the following types of quivers.
$$
\includegraphics[width=4cm]{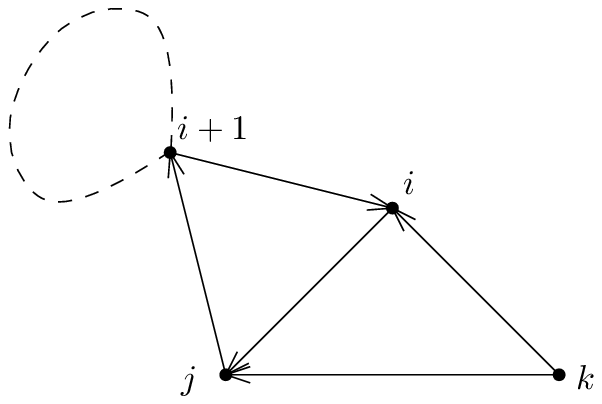}  \hspace{45pt}      \includegraphics[width=4cm]{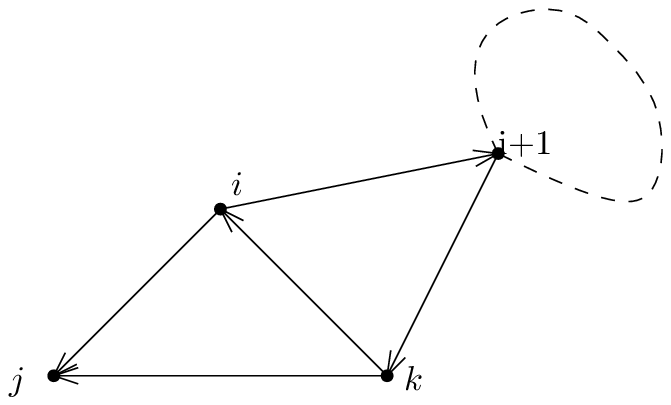}
$$
Let $\Sigma=(\mathbf{x}, Q)$ and $\Sigma'=(\mathbf{x}, \mu_i(Q))$. If
$\mathcal{L}_P(\Sigma)=\mathcal{A}(\Sigma)$, then $\mathcal{L}_P(\Sigma')=\mathcal{A}(\Sigma')$.
\end{lemma}

\begin{proof}
We prove the lemma for quivers of the type
$$
\includegraphics[width=4cm]{pic22-kb}
$$
 - the other case is similar.
Let $\mathbf{x'}=\mathbf{x}\cup \{\mu_i(x_i)\}\backslash \{x_i\}$ and
$g:\mathcal{A}(\mathbf{x}, Q)\rightarrow \mathcal{A}(\mathbf{x'}, Q)$ be the isomorphism from
Remark~\ref{rem:induced-isom}. As in the previous proofs, we have to show that the
cluster variables $g(x_i)=x_i'$ and $g(x_{P_i})$ are generated
by $x_1, x_2, \cdots, x_{n+1}$, $x_{P'_1}, x_{P'_2}, \cdots, x_{P'_{n+1}}$.
Now $g(x_i)=\dfrac{x_j+x_kx_{i+1}}{x_i}$ and
$g(x_{P_i})=\dfrac{x_i+x_k^2}{x_j}$. $x_{P'_j}=\dfrac{x_i+x_kx_{P'_{i+1}}x_j+x_k^2}{x_ix_j}$ and
hence $x_ix_{P'_j}=g(x_{P_i})+x_kx_{P'_{i+1}}$. We have
$x_{P'_i}=\dfrac{x_kx_jx_{P'_{i+1}}^2+x_k^2x_{P'_{i+1}}+x_ix_{P'_{i+1}}
+x_j^2x_{P'_{i+1}}+x_jx_k+x_ix_jx_k}{x_i^2x_{j}x_k}$.
Note that in $\mu_i(Q)$, there is no arrow between $i+1$ and $j$.
We have three cases. \\
Case (1): $i+1$ is a sink vertex in $\mu_i(Q)$ and
the arrow from $i$ is the only arrow in $\mu_i(Q)$ with target $i+1$.
In this case $x_{P'_{i+1}}=\dfrac{x_i+1}{x_{i+1}}$. One checks that
$x_ix_{i+1}x_kx_{P'_i}=\dfrac{x_j+x_kx_{i+1}}{x_i}+x_kx_{i+1}+x_j+(1+x_i)x_{P'_j}$
and the result follows. \\
Case (2): $i+1$ is a sink vertex in $\mu_i(Q)$ and there are exactly two arrows
$\alpha, \beta\in (\mu_i(Q))_1$ with target $i+1$.
By the same argument as in the case (1), we can see that
$\dfrac{x_j+x_kx_{i+1}}{x_i}\in \mathcal{L}_P(\Sigma')$. \\
Case (3): $i+1$ is not a sink vertex in $\mu_i(Q)$.
Since $Q\in \mathscr{Q}_n$, there is exactly one arrow in $\mu_i(Q)$ starting at $i+1$.
Let $i+2$ be its target.
Then we have $x_{P'_{i+1}}=\dfrac{x_{P'_{i+2}}x_i+1}{x_{i+1}}$ and so
$x_ix_{i+1}x_kx_{P'_i}=\dfrac{x_j+x_kx_{i+1}}{x_i}+x_ix_{P'_{i+2}}+x_kx_{i+1}+(1+x_ix_{P'_{i+2}})x_{P'_j}$
and the result follows.
\end{proof}

It remains to consider quivers where the non-oriented cycle consists of two arrows and where both
are part of a 3-cycle, possibly with further arrows. We do this in the following three lemmas.

\begin{lemma}\label{lemm15} Let $Q\in \mathscr{Q}_n$ be the following quiver.
$$
\includegraphics[width=3cm]{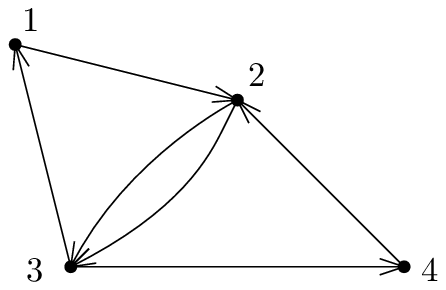}
$$
If $\Sigma=(\mathbf{x}, Q)$ then
$\mathcal{L}_P(\Sigma)=\mathcal{A}(\Sigma)$.
\end{lemma}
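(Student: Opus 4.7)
The plan is to reduce the statement to the acyclic case by two mutations, and then lift the resulting equality back via Lemma~\ref{lemm10} applied twice.

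First I would label the four vertices of $Q$ so that the double arrow runs from $2$ to $3$, the first oriented $3$-cycle is $3\to 1\to 2\rightrightarrows 3$ (using one copy of the double arrow), and the second is $3\to 4\to 2\rightrightarrows 3$ (using the other copy). A direct application of the matrix mutation rule then shows that $\mu_1$ collapses one copy of the double arrow, producing a quiver $\mu_1(Q)$ with a single arrow $2\to 3$, a non-oriented $3$-cycle on $\{1,2,3\}$, and the oriented $3$-cycle on $\{2,3,4\}$ still intact. Mutating once more at $4$ cancels the remaining arrow between $2$ and $3$ entirely, and the resulting quiver $Q'':=\mu_4\mu_1(Q)$ has arrows $2\to 1,\,1\to 3,\,2\to 4,\,4\to 3$, i.e.\ it is the acyclic $\widetilde{A}_3$ quiver whose underlying graph is the non-oriented $4$-cycle $1-2-4-3-1$.

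Since $Q''$ is acyclic, Theorem~\ref{thm1} (coefficient-free case) gives $\mathcal{L}_P(\mathbf{x},Q'')=\mathcal{A}(\mathbf{x},Q'')$. I then apply Lemma~\ref{lemm10} at vertex $4$ of $Q''$: it lies on the unique non-oriented cycle, is incident with no oriented $3$-cycle (there are none in $Q''$), and is neither a sink nor a source. The lemma therefore yields $\mathcal{L}_P(\mathbf{x},\mu_4(Q''))=\mathcal{A}(\mathbf{x},\mu_4(Q''))$, i.e.\ the equality for the seed with quiver $\mu_1(Q)$. A second application of Lemma~\ref{lemm10}, this time at vertex $1$ of $\mu_1(Q)$ --- which lies on the non-oriented $3$-cycle on $\{1,2,3\}$, is disjoint from the remaining oriented $3$-cycle on $\{2,3,4\}$, and is again neither a sink nor a source --- delivers $\mathcal{L}_P(\mathbf{x},\mu_1^2(Q))=\mathcal{A}(\mathbf{x},\mu_1^2(Q))$. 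Since $\mu_1^2=\mathrm{id}$, this is precisely $\mathcal{L}_P(\Sigma)=\mathcal{A}(\Sigma)$.

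The only piece of genuine work is the initial mutation computation confirming that the two chosen mutations collapse the double arrow; beyond that, verifying the three hypotheses of Lemma~\ref{lemm10} at each intermediate quiver is pure bookkeeping. I do not expect any conceptual obstacle here: Lemma~\ref{lemm10} is tailored exactly to mutations at $2$-valent vertices on the non-oriented cycle, and the key observation is simply that when each copy of a double arrow sits inside an oriented $3$-cycle, mutating at the third vertex of each such $3$-cycle suffices to cancel the double arrow and reach an acyclic seed.
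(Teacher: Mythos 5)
Your mutation computations are right ($\mu_1$ kills one copy of the double arrow, $\mu_4\mu_1(Q)$ is the acyclic non-oriented $4$-cycle of Example~\ref{ex:projective-cl-var}), and your overall reduction is exactly the paper's: pass to the acyclic seed $Q''=\mu_4\mu_1(Q)$, invoke Theorem~\ref{thm1} there, and then climb back to $Q$ through the intermediate seed with quiver $\mu_1(Q)$ in two single-mutation steps. Where you genuinely diverge is in how each climbing step is justified. The paper does not cite Lemma~\ref{lemm10} here; instead it runs Theorem~\ref{thm2} directly and verifies by explicit computation that the two extra generators of the mutated cluster algebra lie in the new $\mathcal{L}_P$ (e.g.\ $g(x_{P_4})=\frac{x_4+x_1x_2}{x_3}=x_4x_{P'_3}-x_1$ and $\frac{x_2+x_3}{x_4}=x_2x_{P'_4}-x_{P'_1}$). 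You instead observe that the hypotheses of Lemma~\ref{lemm10} are literally satisfied at vertex $4$ of $Q''$ and at vertex $1$ of $\mu_1(Q)$ (both are $2$-valent, non-sink, non-source vertices of the unique non-oriented cycle, away from any oriented $3$-cycle), which is a tidier and shorter argument \emph{provided Lemma~\ref{lemm10} is taken at face value}. The one caveat you should flag: the proof of Lemma~\ref{lemm10} is written against a schematic picture of a long non-oriented cycle, with decompositions such as $\rad(P'_j)=S_i\oplus M$ and $\rad(P'_k)=N\oplus H$, and in your second application the non-oriented cycle has length $3$, so the two neighbours $j,k$ of the mutated vertex are themselves joined by an arrow of the cycle and the mutation re-creates the double arrow; within the paper Lemma~\ref{lemm10} is only ever applied to cycles of length greater than $2$ in the ``cycle-shortening'' direction, and the authors handle the double-arrow configurations (Lemmas~\ref{lemm11} and~\ref{lemm15}) by separate explicit calculation, presumably for exactly this reason. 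So your argument is sound modulo checking that the proof (not just the statement) of Lemma~\ref{lemm10} survives in this degenerate configuration; the paper's explicit verification is precisely the insurance against that check failing.
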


\begin{proof}
Let $Q''=\mu_1\mu_4(Q)$, i.e. $Q''$ is the quiver
$$
\includegraphics[width=3cm]{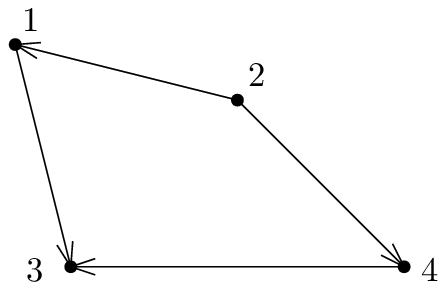}
$$
as in Example~\ref{ex:projective-cl-var}.
By Theorem~\ref{thm1}, $\mathcal{L}_P(\Sigma'')=\mathcal{A}(\Sigma'')$,
where $\Sigma''=(\mathbf{x}, Q'')$.
First we show that $\mathcal{L}_P(\Sigma')=\mathcal{A}(\Sigma')$, where $\Sigma'=(\mathbf{x}, Q')$ and $Q'=\mu_4(Q)$. Let $\mathbf{x'}=\mathbf{x}\cup \{\mu_4(x_4)\}\backslash \{x_4\}$ and
$g:\mathcal{A}(\mathbf{x}, Q'')\rightarrow \mathcal{A}(\mathbf{x'}, Q'')$ be the isomorphism from
Remark~\ref{rem:induced-isom}. As in the previous proofs, we have to show that the
cluster variables $g(x_4)=\dfrac{x_2+x_3}{x_4}$ and $g(x_{P_4})$ are generated
by $x_1, \cdots, x_{4}$, $x_{P'_1}, \cdots, x_{P'_{4}}$.
We calculate $g(x_{P_4})=\dfrac{x_4+x_1x_2}{x_3}$,
$x_{P'_3}=\dfrac{x_4+x_1x_2+x_1x_3}{x_3x_4}$ and
$x_{P'_4}=\dfrac{x_4x_{P'_1}+x_2+x_3}{x_2x_4}$.
Therefore $g(x_{P_4})=x_4x_{P'_3}-x_1$, $\dfrac{x_2+x_3}{x_4}=x_2x_{P'_4}-x_{P'_1}$ and hence
$\mathcal{L}_P(\Sigma')=\mathcal{A}(\Sigma')$.
Now let $\mathbf{x'}=\mathbf{x}\cup \{\mu_1(x_1)\}\backslash \{x_1\}$ and
$g:\mathcal{A}(\mathbf{x}, Q')\rightarrow \mathcal{A}(\mathbf{x'}, Q')$ be the isomorphism from
Remark~\ref{rem:induced-isom}. A similar argument as above shows that $g(x_1)=\dfrac{x_2+x_3}{x_1}$ and $g(x_{P_1})$ are generated
by $x_1, \cdots, x_{4}$, $x_{P'_1}, \cdots, x_{P'_{4}}$ and the result follows.

\end{proof}

Recall that if $\alpha$ is an arrow of the non-oriented cycle of $Q\in \mathscr{Q}_n$
we write $Q_{\alpha}$
for the subquiver attached to $\alpha$ (not including $\alpha$).

\begin{lemma}\label{lemm16}
Let $Q\in \mathscr{Q}_n$ be one of the following types of quivers
where $Q_{\alpha}$ and $Q_{\beta}$ are acyclic quivers of type $A$.
$$
\includegraphics[width=6cm]{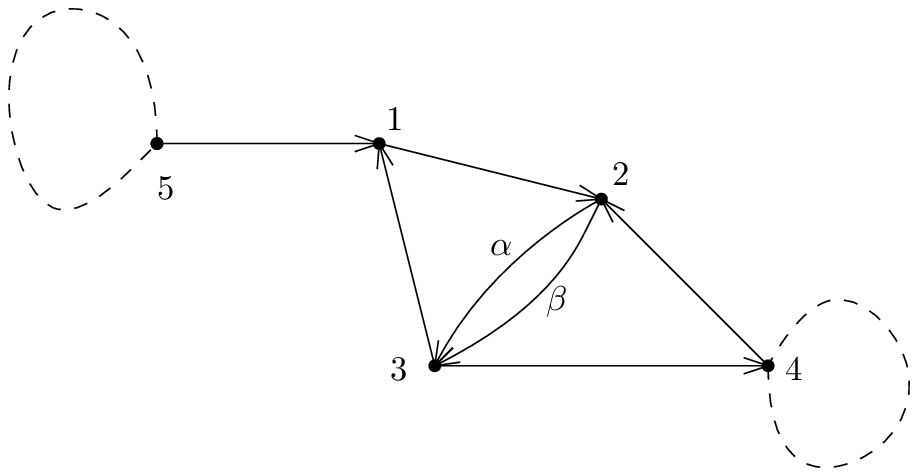}  \hspace{35pt}        \includegraphics[width=6cm]{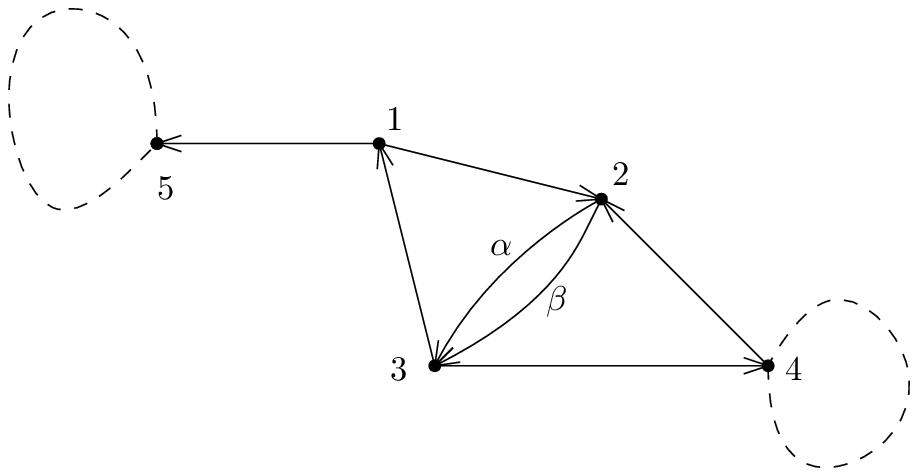}
$$
Let $\Sigma=(\mathbf{x}, Q)$, $\Sigma'=(\mathbf{x}, Q')$ and $Q'=\mu_1(Q)$. If $\mathcal{L}_P(\Sigma')=\mathcal{A}(\Sigma')$ then $\mathcal{L}_P(\Sigma)=\mathcal{A}(\Sigma)$.
\end{lemma}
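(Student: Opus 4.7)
The plan is to apply Theorem~\ref{thm2} with $\Sigma'$ as the base seed, mutating at vertex $1$; this works because of the hypothesis $\mathcal{L}_P(\Sigma')=\mathcal{A}(\Sigma')$ and the identity $\mu_1(Q')=Q$. Let $\mathbf{x}''=\mathbf{x}\cup\{\mu_1(x_1)\}\setminus\{x_1\}$, where $\mu_1(x_1)$ is computed via the exchange relation at vertex $1$ in the quiver $Q'$, and let $g:\mathcal{A}(\mathbf{x},Q')\to\mathcal{A}(\mathbf{x}'',Q')$ be the cluster algebra isomorphism from Remark~\ref{rem:induced-isom}. Then Theorem~\ref{thm2} gives that $\mathcal{A}(\Sigma)=\mathcal{A}(\mathbf{x},\mu_1(Q'))$ is generated over $\mathbb{Z}$ by the cluster variables of $\mathbf{x}''$ together with $g(x_{P'_1})$ and the projective cluster variables $x_{P_2},\dots,x_{P_n}$ of $\Sigma$.

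Since the cluster variables $\{x_i\}_{i\ne1}$ and the projective cluster variables $\{x_{P_i}\}_{i\ne1}$ of $\Sigma$ already belong to $\mathcal{L}_P(\Sigma)$, it suffices to show that the two remaining generators, $\mu_1(x_1)$ and $g(x_{P'_1})$, lie in $\mathcal{L}_P(\Sigma)$. The first is read off directly from the exchange relation at vertex $1$ of $Q'$ (the exchange polynomial has terms involving $x_2$ together with the neighbours $x_{z_\alpha}, x_{z_\beta}$ of vertex $1$ in $Q'$). The second is computed either via Definition~\ref{def:proj-cl-var-acyclic} using a sequence of mutations back to an acyclic seed, or, more conveniently, via Remark~\ref{rem:projective variables}: write $x_{P'_1}$ as the image under the Caldero-Chapoton map of the summand of the tilting object in $\mathcal C_H$ corresponding to vertex $1$ for $\Sigma'$, and then transport along $g$.

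After these computations, one combines them with Lemma~\ref{lemm2} applied at vertices $1$ and $2$ of $Q$ to obtain identities of the shape $\mu_1(x_1)=x_2 x_{P_2}-F$ and $g(x_{P'_1})=x_1 x_{P_?}-G$, with $F,G\in\mathbb{Z}[x_1,\dots,x_n,x_{P_1},\dots,x_{P_n}]$. This directly parallels the computation in Lemma~\ref{lemm15}, which is the leg-free version of the present statement; in fact, the formulas are essentially those of Lemma~\ref{lemm15} augmented by correction terms that come from the acyclic type $A$ legs $Q_\alpha$, $Q_\beta$ attached at $z_\alpha$, $z_\beta$.

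The main obstacle will be the appearance of terms $x_M$, for $M$ an indecomposable summand of the relevant radicals of projectives at the attachment vertices $z_\alpha,z_\beta$, whose precise shape depends on the orientations of the arrows incident with $z_\alpha,z_\beta$ inside $Q_\alpha,Q_\beta$. Exactly as in Case~(A.4) of the proof of Lemma~\ref{lemm5}, one shows inductively (using the acyclic type $A$ structure of $Q_\alpha$ and $Q_\beta$) that every such $x_M$ is expressible as a polynomial in the $x_i$ and the $x_{P_i}$. Once this reduction is in place, the desired membership of $\mu_1(x_1)$ and $g(x_{P'_1})$ in $\mathcal{L}_P(\Sigma)$ follows by the same direct calculation used in Lemma~\ref{lemm15}.
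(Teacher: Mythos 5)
Your proposal follows essentially the same route as the paper: apply Theorem~\ref{thm2} with $\Sigma'$ as the base seed (using the hypothesis $\mathcal{L}_P(\Sigma')=\mathcal{A}(\Sigma')$ and $\mu_1(Q')=Q$), reduce to showing that the two generators $\mu_1(x_1)$ and $g(x_{P'_1})$ lie in $\mathcal{L}_P(\Sigma)$, verify this by direct computation of the relevant projective cluster variables as in Lemma~\ref{lemm15}, and handle the extra $x_M$-terms coming from the legs $Q_\alpha,Q_\beta$ by the inductive argument of Lemma~\ref{lemm5}. This matches the paper's proof (which likewise treats the $Q_\beta=A_1$ case explicitly and leaves the general case to an analogous check), so the approach is correct and not genuinely different.
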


\begin{proof}
It is enough to prove the lemma for the quiver on the left hand, as the other case is similar.
First assume that $Q_{\beta}=A_1$ is a vertex $4$. Let $\mathbf{x'}=\mathbf{x}\cup \{\mu_1(x_1)\}\backslash \{x_1\}$ and
$g:\mathcal{A}(\mathbf{x}, Q')\rightarrow \mathcal{A}(\mathbf{x'}, Q')$ be the isomorphism from
Remark~\ref{rem:induced-isom}. Then $g(x_1)=x_1'=\dfrac{x_2+x_3x_5}{x_1}$ and $g(x_{P_1})=\dfrac{x_1x_4+x_2(x_2+x_3)}{x_3x_4}$. As in the previous proofs, an direct
calculation shows that the cluster variables $g(x_1)$ and $g(x_{P_1})$ are generated
by $x_1, \cdots, x_{n+1}$, $x_{P'_1}, \cdots, x_{P'_{n+1}}$.

Now assume that $Q_{\beta}=A_t$, $t>1$. We have two possible cases:
Either there is an arrow $6\to 4$ or an arrow $4\to 6$ in $Q_{\beta}$, for some
vertex $6$ of $Q_{\beta}$.
In both cases one checks that the cluster variables $g(x_1)$ and $g(x_{P_1})$ are generated
by $x_1, \cdots, x_{n+1}$, $x_{P'_1}, \cdots, x_{P'_{n+1}}$ and the result follows.

\end{proof}

\begin{lemma}\label{lemm17}
Let $Q\in \mathscr{Q}_n$ be a quiver as follows where
$Q_{\beta}$ is acyclic of type $A$.
$$\includegraphics[width=4cm]{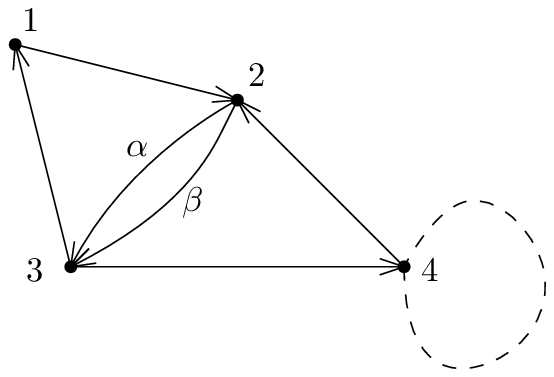}$$
Let $\Sigma=(\mathbf{x}, Q)$, $\Sigma'=(\mathbf{x}, \mu_1(Q))$. If $\mathcal{L}_P(\Sigma')=\mathcal{A}(\Sigma')$ then $\mathcal{L}_P(\Sigma)=\mathcal{A}(\Sigma)$.
\end{lemma}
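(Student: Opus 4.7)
The plan follows the same template as the proofs of Lemmas~\ref{lemm15} and~\ref{lemm16}. Set $\mathbf{x}'=\mathbf{x}\cup\{\mu_1(x_1)\}\setminus\{x_1\}$, where the mutation of $x_1$ is computed using the exchange relation of $Q'=\mu_1(Q)$, and let $g\colon\mathcal{A}(\mathbf{x},Q')\to\mathcal{A}(\mathbf{x}',Q')$ be the cluster-algebra isomorphism from Remark~\ref{rem:induced-isom}. Since by hypothesis $\mathcal{L}_P(\Sigma')=\mathcal{A}(\Sigma')$, Theorem~\ref{thm2} applied to $\Sigma'$ in direction $1$ gives a generating set for $\mathcal{A}(\mathbf{x},\mu_1(Q'))=\mathcal{A}(\Sigma)$ consisting of the variables in $\mathbf{x}'$, the projective cluster variables $x_{P_i}$ of $\Sigma$ for $i\neq 1$, and the element $g(x_{P_1'})$. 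Consequently, in order to conclude $\mathcal{L}_P(\Sigma)=\mathcal{A}(\Sigma)$, it suffices to show that the two cluster variables $g(x_1)$ and $g(x_{P_1'})$ lie in the polynomial ring $\mathbb{Z}[x_1,\ldots,x_{n+1},x_{P_1},\ldots,x_{P_{n+1}}]$.

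I would then compute $g(x_1)$ and $g(x_{P_1'})$ explicitly. The first is immediate from the exchange rule of $Q'$ at the vertex $1$, while the second is obtained by expressing $x_{P_1'}$ via the Caldero--Chapoton formula $X^{T'}_{?}$ of Remark~\ref{rem:projective variables}, applied to a tilting seed corresponding to $\Sigma'$, and transporting the result along $g$. The main tool for turning these Laurent expressions into polynomial identities is Lemma~\ref{lemm2}, which gives $x_i\,x_{P_i}=1+x_{\rad(P_i)}\prod_{l\to i}x_l$ for every vertex $i$, and hence allows one to rewrite all the radicals appearing in $g(x_{P_1'})$ in terms of the projective cluster variables of $\Sigma$. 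Multiplying $g(x_1)$ and $g(x_{P_1'})$ by an appropriately chosen monomial in the variables attached to the vertices adjacent to $1$ in $Q$, and then cancelling, yields the required polynomial identities exactly in the spirit of the denominator-clearing manipulations performed in Lemmas~\ref{lemm15} and~\ref{lemm16}.

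The combinatorial content is a case analysis based on the structure of $Q_{\beta}$: namely, the number of its vertices and the orientation of its unique arrow incident to the non-oriented cycle of $Q$. Since $Q_{\beta}$ is acyclic of type $A$, the acyclicity guarantees that each module $\rad(P_i)$ decomposes as a direct sum of indecomposable projectives, so that Lemma~\ref{lemm2} can be iterated to reduce higher-depth radicals to products of projective cluster variables; in each case the argument is then similar in spirit to those of Lemmas~\ref{lemm13} and~\ref{lemm14}. The main technical obstacle I expect is the bookkeeping of these subcases: choosing in each case the correct monomial multiplier (typically a product of variables $x_2,x_3$ with one or two projective cluster variables of neighbouring vertices) that, together with the Lemma~\ref{lemm2} identities, clears every denominator. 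Once each subcase produces such an identity, the expressions for $g(x_1)$ and $g(x_{P_1'})$ as polynomials in the generators of $\mathcal{L}_P(\Sigma)$ follow, and the lemma is established.
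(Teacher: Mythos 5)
Your plan matches the paper's proof, which simply defers to the template of Lemma~\ref{lemm16}: apply Theorem~\ref{thm2} to $\Sigma'$ in direction $1$ (using the isomorphism $g$ of Remark~\ref{rem:induced-isom}), reduce to showing that the two cluster variables $g(x_1)$ and $g(x_{P_1'})$ lie in $\mathcal{L}_P(\Sigma)$, and verify this by explicit denominator-clearing identities based on Lemma~\ref{lemm2}, with a case analysis on the size of $Q_\beta$ and the orientation of its arrow at the attaching vertex. One small caution: over the cluster-tilted algebra the radicals $\rad(P_i)$ need not be direct sums of indecomposable projectives as you assert (the paper typically writes them as $S\oplus M$ with $M$ non-projective and disposes of $x_M$ by the inductive argument from the first part of the proof of Lemma~\ref{lemm5}), but this is a local imprecision that does not affect the validity of your overall approach.
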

\begin{proof} The proof is similar to the proof of the Lemma~\ref{lemm16}
\end{proof}

Now we are ready to prove the main theorem of this section.

\begin{theorem}\label{thm4} Let $Q$ be a quiver which is mutation equivalent to $\widetilde{A}_n$ and $\Sigma=(\mathbf{x}, Q)$. Then $\mathcal{L}_P(\Sigma)=\mathcal{A}(\Sigma)$.
\end{theorem}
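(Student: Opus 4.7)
The plan is to argue by induction on the number $m(Q)$ of oriented 3-cycles of $Q$, following the template used for Theorem~\ref{thm3}. For the base case $m(Q)=0$, Definition~\ref{def1} implies that no $z_\alpha$ exists (each would contribute a 3-cycle) and consequently no subquiver $Q_\alpha$ is attached, so $Q$ is simply the non-oriented cycle, which is acyclic. Theorem~\ref{thm1} then gives $\mathcal{L}_P(\Sigma)=\mathcal{A}(\Sigma)$.

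For the inductive step with $m(Q)\ge 1$, I would pick a vertex $i$ so that $\mu_i(Q)$ has strictly fewer 3-cycles, and then invoke one of Lemmas~\ref{lemm10}--\ref{lemm18}---read in the reverse direction, using the involutivity of mutation---to transfer the property $\mathcal{L}_P=\mathcal{A}$ from $\mu_i(Q)$ back to $Q$. The choice of $i$ depends on where the 3-cycle to be killed sits. If some 3-cycle lies entirely inside an attached type-$A$ subquiver $Q_\alpha$, I would replay the proof of Theorem~\ref{thm3} inside $Q_\alpha$: pick a 3-cycle of $Q_\alpha$ adjacent to at most one other 3-cycle (by \cite[Lemma~4.3]{BV}), shorten its legs using Lemmas~\ref{lemm3} and \ref{lemm5}, and then kill it via one of Lemmas~\ref{lemm6}--\ref{lemm8}. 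These mutations happen at vertices of $Q_\alpha$ distinct from $z_\alpha$, so they leave the non-oriented cycle untouched and the local hypotheses of those lemmas remain valid.

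If instead every 3-cycle of $Q$ is a boundary triangle $x,y,z_\alpha$ attached to an arrow of the non-oriented cycle, I would first apply Theorem~\ref{thm3} inside each $Q_\alpha$ to arrange that every $Q_\alpha$ is acyclic, and then use Lemma~\ref{lemm10} to reorient the arrows of the non-oriented cycle at the non-3-cycle vertices. This will bring $Q$ into the local form of one of Lemmas~\ref{lemm12}, \ref{lemm13}, \ref{lemm14}, or \ref{lemm18}, whose mutation at the appropriate $z_\alpha$ removes the boundary 3-cycle. The smallest cases, in which the non-oriented cycle has length two, serve as ground level of the induction and are covered by Lemmas~\ref{lemm11} and \ref{lemm15}--\ref{lemm17}.

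The main obstacle will be bookkeeping: one must verify that for every $Q\in\mathscr{Q}_n$ with $m(Q)\ge 1$, a suitable sequence of preparatory mutations---each controlled by Lemma~\ref{lemm10} or one of Lemmas~\ref{lemm3}--\ref{lemm5}---does bring $Q$ into one of the explicit local configurations of Lemmas~\ref{lemm11}--\ref{lemm18}, and moreover that when those lemmas are applied in the reverse direction their hypotheses on the local shape of the quiver still hold for the mutated seed. Once the case analysis is complete and these hypotheses are checked at both ends of each mutation, the induction closes and the theorem follows.
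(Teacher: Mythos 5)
Your overall strategy -- induction on the number $m$ of oriented $3$-cycles, base case via Theorem~\ref{thm1}, transfer lemmas read through the involutivity of mutation, the Theorem~\ref{thm3} argument for $3$-cycles lying inside an attached $Q_\alpha$, and Lemmas~\ref{lemm11}, \ref{lemm15}--\ref{lemm17} for the length-two non-oriented cycle -- is the paper's strategy. But there is a genuine gap in the inductive step for boundary triangles. You assert that after preparation one reaches a configuration as in Lemma~\ref{lemm12}, \ref{lemm13}, \ref{lemm14} or \ref{lemm18} ``whose mutation at the appropriate $z_\alpha$ removes the boundary $3$-cycle.'' This is false whenever $k_\alpha\ge 2$: mutating at the apex $z_\alpha$ of the $3$-cycle $x\to y\to z_\alpha\to x$ cancels the arrow $x\to y$ and replaces it by $x\to z_\alpha\to y$, so the non-oriented cycle absorbs $z_\alpha$; but the extra arrow joining $z_\alpha$ to its neighbour in $Q_\alpha$ then produces a new oriented $3$-cycle attached to one of the two new cycle arrows. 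The number of oriented $3$-cycles is unchanged -- only $k_\alpha$ drops by one. (This is exactly why Lemmas~\ref{lemm12}, \ref{lemm13}, \ref{lemm18} are stated for a $3$-valent vertex $i$ of the non-oriented cycle with a further arrow to $i+1$, and why in their proofs $i$ becomes the apex $z_\alpha$ of a boundary triangle in $\mu_i(Q)$.) Your preparatory moves (Lemma~\ref{lemm10}, Lemmas~\ref{lemm3}--\ref{lemm5}) never decrease $k_\alpha$, so when every $3$-cycle is a boundary triangle with $k_\alpha\ge 2$ your single induction on $m$ has no way to proceed: no available mutation strictly decreases $m$.

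The paper closes this gap with a nested induction on $l=\min_\alpha k_\alpha$. For $l\ge 2$ one mutates at a $z_\alpha$ realizing the minimum: $\mu_i(Q)$ has the same $m$ but $l-1$, so the inner induction hypothesis applies to it, and Lemmas~\ref{lemm18}, \ref{lemm12}, \ref{lemm13} (resp.\ \ref{lemm14}, \ref{lemm16}, \ref{lemm17} when the non-oriented cycle has length two) transfer the property back to $Q$. Only at $l=1$ does mutating at $z_\alpha$ actually destroy a $3$-cycle, reducing $m$ and feeding the outer induction via Lemma~\ref{lemm10} (or Lemmas~\ref{lemm11}, \ref{lemm15}, \ref{lemm17} in the length-two case). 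You need this second induction parameter, or an equivalent termination measure, for the argument to close; as written, your proposal does not terminate in the case $k_\alpha\ge 2$ for all $\alpha$. A smaller point: in your second case (all $3$-cycles are boundary triangles) each $Q_\alpha$ is automatically $3$-cycle-free, so ``applying Theorem~\ref{thm3} inside each $Q_\alpha$'' is vacuous there; the real work in that case is precisely the missing induction on $l$.
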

\begin{proof} We use induction on the number of oriented $3$-cycles in $Q$. If $Q$ is acyclic,
the result follows by Theorem~\ref{thm1}.
Assume that $Q$ has $m$ oriented $3$-cycles, for some $m\ge 1$, and that the result follows for
every quiver $Q'\in \mathscr{Q}_n$ with $t<m$ oriented $3$-cycles.

Since $Q$ is of type $\widetilde{A}_n$, there exists precisely one full subquiver of $Q$ which is a
non-oriented cycle, it is of length $\ge 2$.
If we remove all vertices in the non-oriented cycle and their incident arrows, we obtain
a disjoint union of quivers $Q_1, Q_2, \cdots$ of type $A_{k_\alpha}$ for some $k_\alpha\geq1$.
Let $l$ be the minimum of all $k_\alpha$.
We use the induction on $l$.

\noindent
First assume that $l=1$. We have two cases.

Case (1): The non-oriented cycle in $Q$ has length greater than $2$.
Let $\alpha$ be an arrow of the non-oriented cycle with $k_{\alpha}=1$. Let $i$ be the
vertex forming the oriented 3-cycle with $\alpha$. Then $\mu_i(Q)$ is as in Lemma~\ref{lemm10}
and has $m-1$ oriented $3$-cycles.
By induction hypothesis, the cluster algebra given by $\mu_i(Q)$ has the desired property
and by Lemma~\ref{lemm10}, the result follows.

Case (2): The non-oriented cycle in $Q$ has a length $2$. Let $\alpha$ be an arrow of
the non-oriented cycle with $k_{\alpha}=1$ and $\beta$ be the other arrow in the non-oriented cycle.
We have two cases.

Case (i): If $\beta$ is not contained in any oriented 3-cycle of $Q$, then
$\mathcal{L}_P(\Sigma)=\mathcal{A}(\Sigma)$ by Lemma~\ref{lemm11}
and the result follows.

Case (ii): Assume that $\beta$ is contained in an oriented 3-cycle of $Q$. Then we have two cases.

Case (ii)(1): $k_{\beta}=1$. In this case,
$\mathcal{L}_P(\Sigma)=\mathcal{A}(\Sigma)$ by Lemma~\ref{lemm15}
and the result follows.

Case (ii)(2): $k_{\beta}>1$. If $Q_{\beta}$ contains an oriented $3$-cycle, then by the same argument
as in the proof of Theorem~\ref{thm3} (opening one $3$-cycle through an appropriate mutation) and
the induction hypothesis, $\mathcal{L}_P(\Sigma)=\mathcal{A}(\Sigma)$; the result follows.
So we can assume that $Q_{\beta}$ has no $3$-cycles. Let $i$ be the
vertex forming the oriented 3-cycle with $\alpha$. The quiver $\mu_i(Q)$ has $m-1$ oriented
$3$-cycles and so by induction hypothesis, the cluster algebra given by $\mu_i(Q)$ has the desired
property. Therefore the result follows by Lemma~\ref{lemm17}.

Now assume that $l>1$ and that the result holds for any quiver $Q'\in \mathscr{Q}_n$
with $m$ oriented $3$-cycles for which the minimum of all the $k_{\beta}$ in $Q'$ is less than $l$.
Let $\alpha$ be an arrow in $Q$ with $k_\alpha=l$.
If $Q_{\alpha}$ has an oriented $3$-cycle, then by the same argument as in the proof of
Theorem~\ref{thm3} (opening one $3$-cycle through an appropriate mutation) and
the induction hypothesis, $\mathcal{L}_P(\Sigma)=\mathcal{A}(\Sigma)$ and the result follows.
So we can assume that $Q_{\alpha}$  has no oriented $3$-cycles.
Let $i=z_{\alpha}$ be the vertex of $Q$ forming an oriented $3$-cycle with
$\alpha$. We will mutate at the vertex $i$. The new quiver $\mu_i(Q)$ still has $m$ oriented $3$-cycles
but now  the non-oriented cycle of $\mu_i(Q)$ is longer and in turn, the
minimum of all the $k_{\beta}$ in $\mu_i(Q)$ is $l-1$. We have two cases.

Case (1): The non-oriented cycle in $Q$ has length greater than $2$.
In this case by Lemmas \ref{lemm18}, \ref{lemm12} and \ref{lemm13}
(with the role of the vertex $i$ in these lemmas played by the mutated vertex $i$ here)
and induction hypothesis,
$\mathcal{L}_P(\Sigma)=\mathcal{A}(\Sigma)$ and the result follows.

Case (2): The non-oriented cycle in $Q$ has length $2$.
Let $\beta$ be the other arrow of the non-oriented cycle full subquiver of $Q$. If there is no oriented
$3$-cycle in $Q$ which is contain $\beta$, then in this case by Lemma \ref{lemm14} and
induction hypothesis,
$\mathcal{L}_P(\Sigma)=\mathcal{A}(\Sigma)$ and the result follows. Now assume that there is an
oriented $3$-cycle in $Q$ which contains $\beta$. If $Q_{\beta}$ has an oriented $3$-cycle,
then by the same argument as in the proof of
Theorem~\ref{thm3} (opening one $3$-cycle through an appropriate mutation) and
the induction hypothesis, $\mathcal{L}_P(\Sigma)=\mathcal{A}(\Sigma)$ and the result follows.
So we can assume that $Q_{\beta}$ has no oriented $3$-cycles. In this case
$\mathcal{L}_P(\Sigma)=\mathcal{A}(\Sigma)$ by
Lemma~\ref{lemm16} and the induction hypothesis, so the result follows.\\

\end{proof}

%%%%%%%%%%%%%%%%%%%%%%%%%%%%%%%%%%%%%%
\section*{acknowledgements}
The authors would like to thank Siyang Liu, Greg Muller and Fan Qin for their comments on the first version of this paper. Part of this work was carried out during a visit of the second
author to the Institut f\"{u}r Mathematik und Wissenschaftliches
Rechnen, Universit\"{a}t Graz, Austria, with the financial support of the Joint Excellence in Science and Humanities (JESH) program of the Austrian Academy of Sciences. The second author would like to thank the Austrian Academy of Sciences for its support and this host
institution for its warm hospitality. The research of the second
author was in part supported by a grant from IPM (No. 98170412).
The first author was support through the FWF grants P 30549 and W1230. She is also supported by a
Royal Society Wolfson Fellowship.

\end{document}